\newcommand{\R}{\mathbb{R}}
\newcommand{\Z}{\mathbb{Z}}
\newcommand{\Q}{\mathbb{Q}}
\newcommand{\N}{\mathbb{N}}
\newcommand{\C}{\mathbb{C}}
\renewcommand{\sl}{\operatorname{SL}(2,\Z)}
\newcommand{\rank}{\operatorname{rank}}
\newtheorem{vorlage}{}[section]
\newtheorem{prop}[vorlage]{Proposition}
\newtheorem{lemma}[vorlage]{Lemma}
\newtheorem{cor}[vorlage]{Corollary}
\newtheorem{theorem}[vorlage]{Theorem}
\newtheorem{definition}[vorlage]{Definition}
\newtheorem{remdef}[vorlage]{Remark and Definition}
\newtheorem{ex}[vorlage]{Example}
\theoremstyle{nonumberbreak}
\newtheorem{proof}{Proof}
\newtheorem{mainproof}{Proof of Theorem 1.2}
\title{Modular forms for the $A_{1}$-tower}
\author{Martin Woitalla}
\begin{document}
\maketitle

\begin{abstract}
In the 1960's Igusa determined the graded ring of Siegel modular forms of genus two. He used theta series to construct $\chi_{5}$, the cusp form of lowest weight for the group $\operatorname{Sp}(2,\Z)$. In 2010 Gritsenko found three towers of orthogonal type modular forms which are connected with certain series of root lattices. In this setting Siegel modular forms can be identified with the orthogonal group of signature $(2,3)$ for the lattice $A_{1}$ and Igusa's form $\chi_{5}$ appears as the roof of this tower. We use this interpretation to construct a framework for this tower which uses three different types of constructions for modular forms. It turns out that our method produces simple coordinates. 
\end{abstract}
\section{Introduction}
Let $V$ be a real quadratic space of signature $(2,n)$ where $n\in\N_{\geq 3}$. The bilinear form of $V$ is denoted by $(\cdot,\cdot)$. The group of all isometries of $V$ is called the \textit{orthogonal group of} $V$ and is given by
\[\operatorname{O}(V)=\{g\in\operatorname{GL}(V)\,|\,\forall v\in V \,:\, (gv,gv)=(v,v)\}\,.\]
We extend the bilinear form to $V\otimes\C$ by $\C$-linearity. We consider
\[\mathcal{D}^{\pm}=\{[\mathcal{Z}]\in\mathbb{P}(V\otimes\C)\,|\,(\mathcal{Z},\mathcal{Z})=0\,,\,(\mathcal{Z},\overline{\mathcal{Z}})>0\}\]
on which $\operatorname{O}(V)$ acts as a linear group. The domain $\mathcal{D}^{\pm}$ has two connected components. We choose one of them and denote it by $\mathcal{D}$. We define the subgroups
\[\operatorname{O}(V)^{+}\,,\,\operatorname{SO}(V)^{+}=\{g\in\operatorname{O}(V)^{+}\,|\,\det(g)=1\}\]
of index $2$ and $4$, respectively, which fix $\mathcal{D}$. The latter group is the connected component of the identity and is well-known to be a semisimple and noncompact Lie group. Its maximal compact subgroup is given by $K=\operatorname{SO}(2)\times\operatorname{SO}(n)$ and the Hermitian symmetric space $\operatorname{SO}(V)^{+}/K$ is isomorphic to $\mathcal{D}$. The affine cone is defined as
\[\mathcal{D}^{\bullet}=\{\mathcal{Z}\in V\otimes \C\,|\,[\mathcal{Z}]\in \mathcal{D}\}\,.\]
Let $L\subseteq V$ be a positive definite even lattice such that the dimension of $L\otimes \R$ is $n-2$ and let 
\[U,U_{1}\cong \begin{pmatrix}
                                                                        0&1\\1&0
                                                                       \end{pmatrix}\]
 be two integral hyperbolic planes. Denote by $L(-1)$ the associated negative definite lattice. We consider the arithmetic subgroup
 \[\operatorname{O}(L_{2})^{+}=\{g\in\operatorname{O}(V)^{+}\,|\,g\,L_{2}\subseteq L_{2}\}\]
where $L_{2}\cong U\perp U_{1}\perp L(-1)$ is an even lattice in $V$. 
For any subgroup $\Gamma\leq \operatorname{O}(L_{2})^{+}$ of finite index we consider the modular variety $\Gamma\backslash\mathcal{D}$. This is a noncompact space. In \cite{BorJ} and  \cite{BaBor} the Satake-Baily-Borel compactification of this space is considered. The boundary components of this compactification are usually called the \textit{cusps} of $\Gamma\backslash\mathcal{D}$. In \cite{BaBor} the authors construct a general version of Siegel's $\Phi$-operator to assign boundary values to automorphic forms with respect to $\Gamma$. This is used in the following definition. 
\begin{definition}\label{def:modular form}
 Let $\Gamma$ be a subgroup of $\operatorname{O}(L_{2})^{+}$. A \textit{modular form} of weight $k\in \Z$ and character $\chi\,:\,\Gamma\to\C^{\times}$ with respect to $\Gamma$ is a holomorphic function  $F\,:\,\mathcal{D}^{\bullet}\to\C$ such that 
\[\begin{aligned}
   &F(t\mathcal{Z})=t^{-k}F(\mathcal{Z})\quad\text{ for all }t\in\C^{\times}\,,&\\
   &F(g\mathcal{Z})=\chi(g)F(\mathcal{Z})\quad\text{ for all }g\in\Gamma\,.&
  \end{aligned}
\]
A modular form is called a \textit{cusp form} if it vanishes at every cusp. The space of modular forms of weight $k$ and character $\chi$ for the group $\Gamma$ will be denoted by $\mathcal{M}_{k}(\Gamma,\chi)$. For the subspace of cusp forms we will write $\mathcal{S}_{k}(\Gamma,\chi)$.  
\end{definition} 
Let $\Gamma\leq \operatorname{O}(L_{2})^{+}$ be a subgroup of finite index and denote by $\Gamma'=[\Gamma,\Gamma]$ the commutator subgroup of $\Gamma$. We denote by 
\[\mathcal{A}(\Gamma')=\bigoplus_{k=0}^{\infty}{\mathcal{M}_{k}(\Gamma',1)}\]
the graded ring of modular forms.
It is well-known that this ring is finitely generated. In the sequel the notation $F_{k}\in\mathcal{A}(\Gamma')$ means that $F$ is a homogeneous modular form of weight $k$. We define the \textit{dual lattice} of $L$ as the $\Z$-module
 \[L^{\vee}\index{$L^{\vee}$}:=\{x\in V\,|\,\forall l\in L:\,(x,l)\in\Z\}\,.\]
 Since $L$ is even we have $L\subseteq L^{\vee}$. We define the \textit{discriminant group} as the finite abelian group 
 \[D(L):=L^{\vee}/L\,.\]
The group $\operatorname{O}(L_{2})^{+}$ acts on the discriminant group $D(L_{2})$. The kernel of this action is denoted by $\widetilde{\operatorname{O}}(L_{2})^{+}$. This subgroup will be interesting for our further considerations.  Another natural subgroup is the finite group $\operatorname{O}(L)$ which consists of all automorphisms of the positive definite lattice $L$.  
\vspace{5mm}

\noindent We put our focus to a special series of lattices. Denote by $(\cdot,\cdot)_{m}$ the standard scalar product on $\R^{m}$. If $\varepsilon_{1},\dots,\varepsilon_{m}$ denotes the standard basis of $\R^{m}$ we consider the following $\Z$-module of rank $m$ 
\[mA_{1}=\langle \varepsilon_{1},\dots, \varepsilon_{m}\rangle_{\Z}\,.\]
If we equip $mA_{1}$ with the bilinear form $2(\cdot,\cdot)_{m}$ we obtain a series of (reducible) root lattices where $mA_{1}$ should be understood as an $m$-fold perpendicular sum of type $A_{1}$ root lattices. Due to some low-dimensional exceptional isogenies this series has connections to modular varietes of unitary and symplectic type.
\begin{description}\label{correspondences to classical Siegel,Hermitian and Quaternary varieties}
 \item[Case $m=1$.] In this case $L_{2}(A_{1})$ has signature $(2,3)$ and the group 
\[\Gamma=\operatorname{O}(L_{2}(A_{1}))^{+}\cap\operatorname{SO}(V)^{+}\]
is isomorphic to the projective symplectic group $\operatorname{PSp}(2,\Z)$, compare \cite[Proposition 1.2]{GH}. The variety $\Gamma\backslash\mathcal{D}$ has been studied by Igusa in \cite{Ig}. He showed that the graded ring of Siegel modular forms $\mathcal{A}(\Gamma')$ of genus two is generated by the Siegel Eisenstein series $E_{4},E_{6}$ and cusp forms $\chi_{5},\psi_{12}$ and $\chi_{30}$. 
For any modular form $F\in \mathcal{M}_{k}(\operatorname{O}(L_{2}(A_{1}))^{+},\det^{\kappa})$ where $\kappa,k\in\N_{0}$ the modularity conditions yield
\[(-1)^{\kappa}F(\mathcal{Z})= F((-I_{5}\mathcal{Z}))=F(-\mathcal{Z})=(-1)^{k}F(\mathcal{Z})\,.\]
Hence the determinant-character corresponds to the weight parity in the symplectic setting. According to Igusa's result we have 
\begin{equation}\label{Igusas result}
 \bigoplus_{k\in\Z}{\mathcal{M}_{k}(\operatorname{O}(L_{2}(A_{1}))^{+},1)}\cong\C[E_{4},E_{6},\chi_{5}^{2}, \psi_{12}]\,.
\end{equation}
\item[Case $m=2$.] We consider the Gaussian number field $K=\Q(\sqrt{-1})$ whose ring of integers equals $\mathfrak{o}_{K}=\Z+\Z\sqrt{-1}$. The \textit{special unitary} group $\operatorname{SU}(\mathfrak{o}_{K})\subseteq \operatorname{SL}(4,\mathfrak{o}_{K})$ acts on the Hermitian half-plane of degree two. This can be used to show that $\widetilde{\operatorname{SO}}(L_{2}(2A_{1}))^{+}/\{\pm I_{6}\}$ is isomorphic to $\operatorname{SU}(\mathfrak{o}_{K})/\{\pm I_{4}\}$, compare \cite[Remark 3.3.4]{Wo}. This case has been investigated by Freitag in \cite{F2} and later by Dern and Krieg in \cite{DK}.
\item[Case $m=4$.] Let $Q$ be the rational quaternion algebra of signature $(-1,-1)$. As a vector space over $\Q$ we have
\[Q=\Q+\Q i+\Q j+ \Q ij\,,\,i^{2}=j^{2}=-1\,.\]
A maximal order in $Q$ is given by $\mathfrak{o}=\Z+\Z i+\Z j+\Z \omega$ where $\omega=\frac{1}{2}(1+i+j+ij)$. The order $\mathfrak{o}_{0}=\Z+\Z i +\Z j +\Z ij$ is a sublattice of index $2$ and is isomorphic to $4A_{1}$. This lattice is also known as the ring of \textit{Lipschitz quaternions} and $\mathfrak{o}$ is the ring of \textit{Hurwitz quaternions}. The corresponding modular group is $\operatorname{Sp}(2,\mathfrak{o}_{0})$ and can be identified with a subgroup of $\operatorname{O}(L_{2}(4A_{1}))/\{\pm I_{8}\}$. The rings of quaternionic modular forms have been investigated by Freitag and Krieg in \cite{FH}, \cite{K2} and \cite{K4}.
\end{description}
In \cite{G1} Gritsenko found three towers of reflective modular forms. In his construction  Igusa's modular form $\chi_{5}$ is the roof of the $4A_{1}$-tower. In the sequel we will develop a framework around Gritsenko's tower without making use of the exceptional isogenies. We will use three different types of coordinates. 
\begin{enumerate}[(i)]
 \item The so called \textit{Eisenstein type} modular forms constitute the first type. These forms are pullbacks of Gritsenko's singular modular form for the even unimodular lattice of signature $(2,10)$. If we additionally take into account the heat operator for several variables considered in \cite{ChoKi} we obtain non-cusp forms of weight $4$ and $6$. The common source function for all these forms is the classical Eisenstein series of weight $4$ for the group $\sl$. 
\item The second family of modular forms arises as a natural extension of the $4A_{1}$-tower of reflective modular forms. These forms are called \textit{theta type} modular forms and are investigated in \cite{Wo1}. The source function of this tower is $\Delta_{12}$,  the first cusp form for the group $\sl$. 
 \item  The third family is of \textit{baby monster type} (bm type) and arises as a quasi-pullback of Borcherds famous $\Phi_{12}$-function which is the denominator function of the fake monster Lie algebra, compare \cite{Bo2} and \cite{GHS2}. In \cite{Gr} an algorithm is presented to produce many reflective modular forms of baby monster type. We can again consider $\Delta_{12}$ as the common source function of the bm type modular forms. 
\end{enumerate}
Besides the determinant-character the group $\operatorname{O}(L_{2}(mA_{1}))^{+}$ admits two more finite characters. The discriminant group $D(mA_{1})$ is isomorphic to $m$ copies of the cyclic group of order two. The quadratic form on $L_{2}$ induces the \textit{discriminant form} on $D(L_{2})$ and we obtain the finite orthogonal group $\operatorname{O}(D(L_{2}))$ as the image of the natural homomorphism
\[ \pi\,:\,\operatorname{O}(L_{2})^{+}\to\operatorname{O}(D(L_{2}))\,.\]
The kernel of this homomorphism is the stable orthogonal group $\widetilde{\operatorname{O}}(L_{2})^{+}$. In our case $\operatorname{O}(D(L_{2}(mA_{1})))\cong \operatorname{O}(D(mA_{1}))$ is isomorphic to the symmetric group on $m$ letters $\mathcal{S}_{m}$ and $\pi$ is surjective. This yields a binary character
\[v_{\pi}\,:\,\operatorname{O}(L_{2}(mA_{1}))^{+}\twoheadrightarrow \mathcal{S}_{m}\xrightarrow[]{\operatorname{sgn}}\{\pm 1\}\,.\]
The construction of $mA_{1}$ implies that $(x,y)_{mA_{1}}\in 2\Z$ for all $x,y\in mA_{1}$. In this case we can construct another binary character, see e.g. \cite[Proposition 1.26]{Kl} and \cite[Theorem 2.2]{CG}:
\[v_{2}\,:\,\operatorname{O}(L_{2}(mA_{1}))^{+}\to\operatorname{Sp}(2,\mathbb{F}_{2})\to\mathcal{S}_{6}\xrightarrow[]{\operatorname{sgn}}\{\pm 1\}\,.\]
The construction of $v_{2}$ implies 
\[\ker v_{2}\,\cap\, \widetilde{\operatorname{SO}}(L_{2}(mA_{1}))^{+}\lneq \widetilde{\operatorname{SO}}(L_{2}(mA_{1}))^{+}\quad,\quad \operatorname{O}(mA_{1})\leq \ker v_{2}\,.\] 
We set for abbreviation $\Gamma_{m}:=\operatorname{O}(L_{2}(mA_{1}))^{+}$ and $\widetilde{\Gamma}_{m}:=\widetilde{\operatorname{O}}(L_{2}(mA_{1}))^{+}$. In \cite[Proposition 5.4.2]{Wo} it is shown that $\Gamma_{m}/\Gamma_{m}'\cong\langle \det, v_{2},v_{\pi}\rangle\cong \mathcal{C}_{2}^{3}$ if $m=2,3,4$ and $\Gamma_{1}/\Gamma_{1}'\cong\langle \det, v_{2}\rangle.$ The paper is organized in the following way.
\vspace{5mm}

\noindent In section 2 we introduce Jacobi forms of theta type. These forms are obtained by twisting powers of Jacobi's theta function of weight and index $1/2$ with the weak Jacobi form of weight 0 and index 1 defined in \cite{EZ} and a multiplication with suitable powers of Dedekind's eta function. Moreover Jacobi forms of Eisenstein type are introduced. The arithmetic lifting of these functions yields modular forms for the orthogonal group with trivial character. In section 3 we consider two refinements of theta type Jacobi forms which yield two more series of modular forms with respect to binary characters. The first one uses a variant of the arithmetic lifting for Jacobi forms of half-integral index given in \cite{CG}. The second series is obtained by considering a cusp form of weight $24$ for the lattice $D_{4}$. We rewrite the coordinates of this function for the sublattice $4A_{1}$ and obtain a series of length three by considering quasi-pullbacks. Finally the quasi-pullbacks of Borcherd's function $\Phi_{12}$ produce another series of modular forms including Igusa's function $\chi_{30}$. This enables us to state our main theorem. 
\begin{theorem}\label{theorem:sructure graded rings with respect to character A1}
Let $m\in \{1,2,3,4\}$. The graded ring of modular forms $\mathcal{A}(\Gamma_{m}')$ is generated by the $m$-th row of the following table
 \begin{center}
 \begin{tikzpicture}[scale=0.3,node distance=2cm,auto]
   \node (12+A1) at (0cm,-2.5cm) {$\textcolor{black}{F_{12}^{A_{1}}}$};
   \node (12+2A1) at (0cm,-6cm) {$\textcolor{black}{F_{12}^{2A_{1}}}$};
   \node (12+3A1) at (0cm,-9.5cm) {$\textcolor{black}{F_{12}^{3A_{1}}}$};
   \node (12+4A1) at (0cm,-13cm) {$F_{12}^{4A_{1}}$};
   \node[] (10+A1) at (4cm,-2.5cm) {$\textcolor{black}{\chi_{5}^{A_{1}}}$};
   \node[] (10+2A1) at (4cm,-6cm) {$\textcolor{black}{F_{10}^{2A_{1}}}$};
   \node[] (10+3A1) at (4cm,-9.5cm) {$\textcolor{black}{F_{10}^{3A_{1}}}$};
   \node[] (10+4A1) at (4cm,-13cm) {$F_{10}^{4A_{1}}$};
   \node[] (8+2A1) at (8cm,-6cm) {$\textcolor{black}{\chi_{4}^{2A_{1}}}$};
   \node[] (8+3A1) at (8cm,-9.5cm) {$\textcolor{black}{F_{8}^{3A_{1}}}$};
   \node[] (8+4A1) at (8cm,-13	cm) {$F_{8}^{4A_{1}}$};
   \node[] (6+3A1) at (12cm,-9.5cm) {$\textcolor{black}{\chi_{3}^{3A_{1}}}$};
  \node[] (6+4A1) at (12cm,-13cm) {$F_{6}^{4A_{1}}$};
  \node[] (4+4A1) at (16cm,-13cm) {$\chi_{2}^{4A_{1}}$};
  \node[] (e4+4A1) at (-9cm,-13cm) {$\mathcal{E}_{4}^{4A_{1}}$};
  \node[] (e4+3A1) at (-9cm,-9.5cm) {$\mathcal{E}_{4}^{3A_{1}}$};
  \node[] (e4+2A1) at (-9cm,-6cm) {$\mathcal{E}_{4}^{2A_{1}}$};
  \node[] (e4+A1) at (-9cm,-2.5cm) {$\mathcal{E}_{4}^{A_{1}}$};
  \node[] (e6+4A1) at (-4cm,-13cm) {$\mathcal{E}_{6}^{4A_{1}}$};
 \node[] (e6+3A1) at (-4cm,-9.5cm) {$\mathcal{E}_{6}^{3A_{1}}$};
 \node[] (e6+2A1) at (-4cm,-6cm) {$\mathcal{E}_{6}^{2A_{1}}$};
 \node[] (e6+A1) at (-4cm,-2.5cm) {$\mathcal{E}_{6}^{A_{1}}$};
  \node[] (F30+A1) at (-16cm,-2.5cm) {$H_{30}^{A_{1}}$};
  \node[] (F30+2A1) at (-16cm,-6cm) {$H_{30}^{2A_{1}}$};
  \node[] (F30+3A1) at (-16cm,-9.5cm) {$H_{30}^{3A_{1}}$};
  \node[] (F30+4A1) at (-16cm,-13cm) {$H_{30}^{4A_{1}}$};
  \node[] (Delta10+2A1) at (12cm,-6cm) {$\Delta_{10}^{2A_{1}}$};
  \node[] (Delta18+3A1) at (16cm,-9.5cm) {$\Delta_{18}^{3A_{1}}$};
  \node[] (Delta24+4A1) at (20cm,-13cm) {$\Delta_{24}^{4A_{1}}$};
  \draw[] (-2cm,0cm) -- (-2cm, -15cm);
  \draw[] (-12cm,0cm) -- (-12cm, -15cm);
  \draw[] (-19cm,0cm) -- (-19cm, -15cm);
  \draw[] (-21cm,-1.5cm) -- (20cm, -1.5cm);
 \node[] at (-20cm,-0.5cm) {$m$};
 \node[] at (-16cm,-0.5cm) {\textnormal{bm type}};
 \node[] at (-7cm,-0.5cm) {\textnormal{Eisenstein type}};
 \node[] at (2cm,-0.5cm) {\textnormal{theta type}};
 \node[] at (-20cm,-2.5cm) {$1$};
 \node[] at (-20cm,-6.0cm) {$2$};
 \node[] at (-20cm,-9.5cm) {$3$};
 \node[] at (-20cm,-13.0cm) {$4$};
\end{tikzpicture}
\end{center}
where the index indicates the weight. 
\end{theorem}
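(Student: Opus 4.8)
The statement splits into two parts: every entry of the $m$-th row belongs to $\mathcal{A}(\Gamma_m')$, and these entries generate the ring. Membership is precisely what the constructions of Sections 2 and 3 supply, since each form is produced as an orthogonal modular form of the stated weight transforming under a prescribed character of $\Gamma_m/\Gamma_m'$, which by the computation recalled in the introduction is $\langle \det, v_2, v_\pi\rangle \cong \mathcal{C}_2^3$ for $m\in\{2,3,4\}$ and $\langle \det, v_2\rangle$ for $m=1$. Decomposing into isotypic components gives $\mathcal{A}(\Gamma_m') = \bigoplus_\chi \bigoplus_k \mathcal{M}_k(\Gamma_m,\chi)$, and a preliminary step is to evaluate $\chi(-I_{4+m})$ for each character: because $-I$ acts trivially on $D(mA_1)$ one has $v_\pi(-I)=1$, which together with $\det(-I)=(-1)^m$ pins the weight parity inside each character class through the homogeneity relation $(-1)^k=\chi(-I)$. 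This already accounts for the weight pattern of the table (all even weights for $m=2,4$; the odd generators $\chi_5^{A_1}$ and $\chi_3^{3A_1}$ lying in the $\det$-class for $m=1,3$) and reduces the problem to producing enough forms in each class.

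The generation argument I would run as an induction up the tower $A_1\subset 2A_1\subset 3A_1\subset 4A_1$, the base case $m=1$ being Igusa's theorem recalled above. Writing $mA_1=(m-1)A_1\perp A_1$ and fixing a root $r$ spanning the last summand, the hyperplane $r^\perp$ carries a copy of the smaller domain, and restriction of functions induces a weight- and character-preserving homomorphism $\rho\colon \mathcal{A}(\Gamma_m')\to\mathcal{A}(\Gamma_{m-1}')$, while quasi-pullback across $r^\perp$ links the distinguished theta-type tower $\chi_{6-m}^{mA_1}\mapsto\chi_{7-m}^{(m-1)A_1}$ and, analogously, the bm-type tower $H_{30}^{mA_1}\mapsto H_{30}^{(m-1)A_1}$. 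The Eisenstein-type and the higher theta-type generators are designed so that their restrictions generate $\operatorname{im}\rho$; combined with the induction hypothesis this identifies $\operatorname{im}\rho$ with the subring of $\mathcal{A}(\Gamma_{m-1}')$ of the matching parity. The $\Phi$-operator to $\sl$ runs alongside as a bookkeeping device, sending $\mathcal{E}_4^{mA_1}$ and the theta/bm towers to their common source functions $E_4$ and $\Delta_{12}$, which fixes the otherwise undetermined scalars.

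The heart of the induction is the kernel of $\rho$, the forms vanishing on the $\Gamma_m$-orbit of $r^\perp$. I would show this ideal is principal, generated by the distinguished reflective cusp form $\chi_{6-m}^{mA_1}$: from its Borcherds-product expansion its divisor is exactly that orbit with multiplicity one, so any form vanishing there is divisible by it with holomorphic quotient (holomorphy across the boundary by the Koecher principle), the quotient having weight lowered by $6-m$ and character twisted by that of $\chi_{6-m}^{mA_1}$. Granting this, generation follows by a secondary induction on the weight: for $F$ in a given graded piece, $\rho(F)\in\operatorname{im}\rho$ equals $\rho(G)$ for a polynomial $G$ in the listed generators, whence $F-G=\chi_{6-m}^{mA_1}\cdot H$ with $H$ of strictly smaller weight; by the inductive hypothesis $H$ is a polynomial in the generators, and since $\chi_{6-m}^{mA_1}$ is one of them, so is $F$. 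The finitely many low-weight base cases are settled by comparing $\dim\mathcal{M}_k(\Gamma_m,\chi)$ with the dimension of the degree-$k$ piece of the candidate subring.

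I expect the decisive obstacle to be the kernel computation: establishing that $\chi_{6-m}^{mA_1}$ is reflective with reduced divisor exactly the orbit of $r^\perp$, and that the bm-type generator $H_{30}^{mA_1}$, a quasi-pullback of Borcherds' $\Phi_{12}$, introduces no further divisorial obstruction, so that the relevant principal ideals together exhaust $\ker\rho$ in every character. A clean way to certify the argument and discharge the base cases is to match Hilbert series: compute $\sum_k\dim\mathcal{M}_k(\Gamma_m,\chi)\,t^k$ for each character from the lower-tower data and the exact sequences attached to $\rho$, compute the Hilbert series of the explicit subring from the relations among the generators, and check equality; since the subring is a priori contained in $\mathcal{A}(\Gamma_m')$, equality of Hilbert series forces equality of rings. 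What remains is pure bookkeeping: tracking the characters $\det,v_2,v_\pi$ through restriction, quasi-pullback, and division so that every generator is assigned to its correct class.
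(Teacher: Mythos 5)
Your overall architecture (decompose $\mathcal{A}(\Gamma_m')$ by the characters of $\Gamma_m/\Gamma_m'$, run an induction up the tower with Igusa as base case, identify the kernel of restriction with a principal ideal generated by a reflective form, and finish with Koecher's principle) matches the paper's treatment of the \emph{trivial-character} subring $\mathcal{A}(\Gamma_m)$ almost exactly --- with the small correction that there one divides by $G_{12-2m}^{mA_1}=(\chi_{6-m}^{mA_1})^2$ rather than by $\chi_{6-m}^{mA_1}$, because a $\Gamma_m$-invariant form vanishing on $\mathcal{D}_{\varepsilon_m}^{m}$ automatically vanishes to order two there (its Taylor expansion in $z_m$ is even, by invariance under the reflection $\sigma_{\varepsilon_m}\in\Gamma_m$).

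The genuine gap is in your treatment of the nontrivial characters. Your induction step needs, for each character $\chi$ of $\Gamma_m$, that the restriction $\rho$ maps the $\chi$-isotypic piece onto the restriction of the candidate subring in that piece; only then can you form $F-G$ with $F$ and $G$ of the \emph{same} character and divide by $\chi_{6-m}^{mA_1}$. This surjectivity is exactly what fails to be accessible: for instance $\chi_5^{A_1}$ is not the restriction of any polynomial in the row-$2$ generators (weight $5$ is unreachable), and in general the image of $\rho$ on the $v_\pi$- and $v_2$-pieces is not ``the subring of matching parity'' and is not determined by your data. Moreover your only division device is $\chi_{6-m}^{mA_1}$, whose divisor is the orbit of $\mathcal{D}_{\varepsilon_m}^{m}$; nothing in your argument explains why a form with character $v_\pi$ is divisible by $\Delta_{k_m}^{mA_1}$ or why a form with character $v_2$ is divisible by $H_{30}^{mA_1}$. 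The paper's key device, which you are missing, is its Lemma on divisors forced by characters: if $\lambda(\sigma)=-1$ for a reflection $\sigma\in\Gamma_m$, then $F\in\mathcal{M}_k(\Gamma_m,\lambda)$ vanishes on the fixed locus of $\sigma$. Applied to $\sigma_{\varepsilon_m}$, $\sigma_{\varepsilon_1-\varepsilon_m}$ and $\sigma_{e_1-f_1}$ this forces vanishing on $\mathcal{D}_{\varepsilon_m}^{m}$, $\mathcal{D}_{\varepsilon_1-\varepsilon_m}^{m}$ or $\mathcal{D}_{e_1-f_1}^{m}$ according to whether $\lambda$ involves $\det$, $v_\pi$ or $v_2$, and these loci are precisely the (multiplicity-one) divisors of $\chi_{6-m}^{mA_1}$, $\Delta_{k_m}^{mA_1}$ and $H_{30}^{mA_1}$. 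Successive division by these three forms strips the character down to the trivial one in at most three steps, after which the already-established structure of $\mathcal{A}(\Gamma_m)$ applies; no character-by-character analysis of $\operatorname{im}\rho$ is needed. Finally, your proposed Hilbert-series certification is circular as stated, since the dimensions $\dim\mathcal{M}_k(\Gamma_m,\chi)$ are not available independently of the theorem.
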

The pullback structure underlying the above table is explained in section 2 and 3.
The generators in the cases $m=1,2,3$ have been determined before by Igusa, Freitag, Dern, Krieg and Klöcker whereas generators in the case $m=4$ have only been determined for the ring $\mathcal{A}(\Gamma_{4})$ in \cite{K4} best to the author's knowledge. Finally in section 4 we give a number theoretical application. The construction of Eisenstein type modular forms allows us to express some numbers of lattice points lying on a sphere as special values of $L$-functions which appear in \cite{BrK}.
\vspace{5mm}

\noindent\textbf{Acknowledgements} The results of this paper are part of the author's phd-thesis. The author would like to thank the supervisors Valery Gritsenko and Aloys Krieg for their guidance and support.

\section{Jacobi forms}
Let $L$ be a positive definite even lattice. The Jacobi group $\Gamma^{J}(L)$ is considered in \cite{CG} and is isomorphic to $\sl\ltimes H(L)$ where $H(L)$ denotes the integral Heisenberg group for the lattice $L$. We denote by $\mathbb{H}$ the upper half-plane in $\C$. Following \cite{EZ} and \cite{G} we define an action of the Jacobi group on the space of holomorphic functions defined on $\mathbb{H}\times (L\otimes \C)$. By considering the generators of the Jacobi group we can use this action to introduce the notion of a Jacobi form.
\begin{definition}\label{Jacobi form}
 Let $k,t\in\N_{0}$. A holomorphic function $\varphi\,:\,\mathbb{H}\times(L\otimes \C)\to\C$ is called a \textit{weak Jacobi form} of weight $k$ and index $t$ with character $\chi$ if the following conditions are satisfied:
\begin{enumerate}[(i)]
 \item For all $A=\begin{pmatrix}                                                                                                                                                                                                       a&b\\c&d                                                                                                                                                                                                             \end{pmatrix}\in\sl$:
\[\begin{aligned}
&\varphi\left(\frac{a\tau+b}{c\tau+d},\frac{\mathfrak{z}}{c\tau +d}\right)=\chi(A)(c\tau +d)^{k}e^{\pi i t\frac{c(\mathfrak{z},\mathfrak{z})}{c\tau+d}}\varphi(\tau,\mathfrak{z})\,.&\end{aligned}\]
\item For all $x,y\in L$:
\[\begin{aligned}&\varphi(\tau,\mathfrak{z}+x\tau+y)&&=\chi([x,y:-(x,y)/2])\cdot e^{-2\pi it(\frac{1}{2}(x,x)\tau+(x,\mathfrak{z}))}\,\varphi(\tau,\mathfrak{z})&
  \end{aligned}\]
where $[x,y:-(x,y)/2]\in H(L)$, compare \cite{CG} for the realization of $H(L)$.
\item The Fourier expansion of $\varphi$ has the shape
\[ \varphi(\tau,\mathfrak{z})=\sum_{\substack{n\in\N_{0}\\l\in\frac{1}{2}L^{\vee}}}{f(n,l)e^{2\pi i (n\tau+(l,\mathfrak{z}))}}\,.\]
\end{enumerate}
We call $\varphi$ a \textit{holomorphic Jacobi form} if the Fourier expansion ranges over all $n,l$ such that $2nt-(l,l)\geq 0$ and $\varphi$ is called  a \textit{Jacobi cusp form} if it ranges over all $n,l$ satisfying $2nt-(l,l)> 0$.
\end{definition}
\begin{remdef}
\begin{enumerate}[(a)]
 \item The action can be extended for $k,t\in\frac{1}{2}\Z_{\geq 0}$ and $\chi|_{\sl}$ being a multiplier system for $\sl$. Here we have to replace $\sl$ by the metaplectic cover $\operatorname{Mp}(2,\Z)$, see e.g. \cite{Br}. In this more general situation we use the notation
\[J^{(\textit{cusp})}_{k,L;t}(\chi)\subseteq J_{k,L;t}(\chi)\subseteq J^{(\textit{weak})}_{k,L;t}(\chi)\]
 for the corresponding spaces of Jacobi forms. If $\chi=1$ we write $J^{(\ast)}_{k,L;t}$ for each of these spaces. 
 \item The notion of a Jacobi form is compatible with Definition \ref{def:modular form}. To see this we note that we have an affine model for the homogeneous domain $\mathcal{D}$ given by
\[\mathcal{H}(L_{2})=
\left\{(\omega,\mathfrak{z},\tau)\in \C\times (L\otimes \C)\times \C\,\left|\begin{aligned}
                                                                                                  &\omega_{i},\tau_{i}>0\,,&\\&2\omega_{i}\tau_{i}-(\mathfrak{z}_{i},\mathfrak{z}_{i})>0
                                                                                                 \end{aligned}\right.\right\}
\]
where we have used the abbreviations
\[\omega_{i}:=\operatorname{Im}(\omega)\quad,\quad \tau_{i}:=\operatorname{Im}(\tau)\quad,\quad \mathfrak{z}_{i}:=\operatorname{Im}(\mathfrak{z})\,.\]
 Let $\varphi\in J_{k,L;t}(\chi)$ where we assume $k\in\Z$. We define a holomorphic function  on $\mathcal{H}(L_{2})$ by
 \[\widetilde{\varphi}(\tau,\mathfrak{z})=\varphi(\tau,\mathfrak{z})e^{2\pi it\omega}\,.\]
Since  $\mathcal{D}$ and  $\mathcal{H}(L_{2})$ are biholomorphically equivalent we can interpret $\widetilde{\varphi}$ as an element in $\mathcal{M}_{k}(\Gamma^{J}(L),\chi)$.  
\end{enumerate}
\end{remdef}
The following two examples are the basic ingredients to define theta type Jacobi forms.
\begin{ex}\label{eta function and Gritsenkos theta function}
 \begin{enumerate}[(a)]
  \item Dedekind's eta function is a Jacobi form of weight $1/2$ and index $0$ for every positive definite even lattice $L$, thus $\eta\in J_{1/2,L;0}^{\textit{(cusp)}}(v_{\eta})$ where $v_{\eta}$ is a multiplier system for $\sl$. 
  \item The Jacobi theta series of characteristic $(\frac{1}{2},\frac{1}{2})$ is given as 
\[\vartheta(\tau,z)\index{$\vartheta(\tau,z)$}=\sum_{n\in\Z}{\left( \frac{-4}{n}\right)q^{\frac{n^{2}}{8}}r^{\frac{n}{2}}}=\sum_{n\in\Z\,,\,n \equiv 1\bmod 2}{(-1)^{\frac{n-1}{2}}\exp\left(\frac{\pi i n^{2}\tau}{4}+\pi i n z\right)}\]
where $q=e^{2\pi i \tau}\,,\,\tau\in\mathbb{H}$ and $r=e^{2\pi i z}\,,\,z\in\C$. 
This function was originally discovered by Carl Gustav Jacob Jacobi. In \cite{GN} the authors reinterpreted this function as a modular form of half-integral weight and index. 
Jacobi's triple identity yields
\[\vartheta(\tau,z)=-q^{1/8}r^{-1/2}\prod_{n\geq 1}{(1-q^{n-1}r)(1-q^{n}r^{-1})(1-q^{n})}\,.\]
The function has the properties
\[\begin{aligned}
   &\vartheta(\tau,-z)&&=&&-\vartheta(\tau,z)\,,&\\&\vartheta(\tau,z+x\tau+y)&&=&&(-1)^{x+y}\exp(-\pi i(x^{2}\tau+2xz))\,\vartheta(\tau,z)&
  \end{aligned}\]
for all $x,y\in\Z$
and the set of zeroes of $\vartheta$ equals
\[\{x\tau+y\,|\,x,y\in\Z\}\,.\]  
 \end{enumerate}
\end{ex}
In the sequel let $M$ be a positive definite even lattice and $L\leq M$ be a sublattice of $M$. We define
\[L_{M}^{\perp}:=\{m\in M\,|\, \forall l\in L\,:\, (l,m)=0\}\]
and note that this is again a positive definite sublattice in $M$. The direct sum \[L\oplus L_{M}^{\perp}\leq M\] is a sublattice of finite index. 
 The next Lemma can be found in \cite{CG}, Proposition 3.1.
\begin{lemma}
 Let $L\leq M$ be a sublattice such that $\rank L < \rank M$ and let $\varphi\in J_{k,M;t}(\chi)$ be a Jacobi form of weight $k$ and index $t$ for the character $\chi$. Consider the decomposition $\mathfrak{z}_{M}=\mathfrak{z}_{L}\oplus\mathfrak{z}_{L^{\perp}}\in M\otimes\C=(L\oplus L_{M}^{\perp})\otimes \C$. We define the pullback of $\varphi$ to $L$ as the function  $\varphi\downharpoonright_{L}$ on $\mathbb{H}\times
 (L\otimes\C)$
\[\varphi\downharpoonright_{L}(\tau,\mathfrak{z}_{L}):=\varphi(\tau,\mathfrak{z}_{L}\oplus 0)\,.\]
Then $\varphi\downharpoonright_{L}\in  J_{k,L;t}(\left.\chi\right|_{\Gamma^{J}(L)})$ and the pullback maps cusp forms to cusp forms.
\end{lemma}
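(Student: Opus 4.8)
The plan is to verify directly that $\varphi\downharpoonright_{L}$ satisfies the three defining conditions of Definition~\ref{Jacobi form} for the lattice $L$, weight $k$, index $t$ and character $\chi|_{\Gamma^{J}(L)}$, and then to read off the cusp property from the Fourier expansion. The one geometric fact driving everything is that the decomposition $M\otimes\C=(L\oplus L_{M}^{\perp})\otimes\C$ is orthogonal for the $\C$-bilinear extension of the form; consequently, embedding $\mathfrak{z}_{L}\in L\otimes\C$ as $\mathfrak{z}_{L}\oplus 0$ preserves all inner products, e.g.
\[(\mathfrak{z}_{L}\oplus 0,\mathfrak{z}_{L}\oplus 0)=(\mathfrak{z}_{L},\mathfrak{z}_{L})\,,\qquad (x\oplus 0,\mathfrak{z}_{L}\oplus 0)=(x,\mathfrak{z}_{L})\]
for $x\in L$. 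For condition (i) I would substitute $\mathfrak{z}_{M}=\mathfrak{z}_{L}\oplus 0$ into the $\sl$-transformation law of $\varphi$; since scaling by $(c\tau+d)^{-1}$ leaves the zero component zero and the exponential factor only sees $(\mathfrak{z}_{L}\oplus 0,\mathfrak{z}_{L}\oplus 0)=(\mathfrak{z}_{L},\mathfrak{z}_{L})$, the law for $\varphi\downharpoonright_{L}$ drops out verbatim. For condition (ii), given $x,y\in L$ I would regard them as elements $x\oplus 0,y\oplus 0\in M$ and apply the Heisenberg transformation law of $\varphi$; the inner products appearing in the automorphy factor again collapse to their $L$-values, and the Heisenberg element attached to $x\oplus 0,y\oplus 0$ is exactly the image of $[x,y:-(x,y)/2]\in H(L)$ under the natural embedding $\Gamma^{J}(L)\hookrightarrow\Gamma^{J}(M)$, so the character specializes to $\chi|_{\Gamma^{J}(L)}$.

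For condition (iii) I would insert $\mathfrak{z}_{M}=\mathfrak{z}_{L}\oplus 0$ into the (absolutely convergent) Fourier expansion of $\varphi$. Writing $p_{L}$ for the orthogonal projection of $M\otimes\R$ onto $L\otimes\R$, one has $(l,\mathfrak{z}_{L}\oplus 0)=(p_{L}(l),\mathfrak{z}_{L})$, so the exponentials depend on $l$ only through $p_{L}(l)$. The decisive algebraic point is that $p_{L}(l)\in\tfrac{1}{2}L^{\vee}$ whenever $l\in\tfrac{1}{2}M^{\vee}$: for $\lambda\in L\subseteq M$ we have $(2l,\lambda)\in\Z$ and $(2p_{L}(l),\lambda)=(2l,\lambda)$, hence $2p_{L}(l)\in L^{\vee}$. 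Regrouping the series according to $l':=p_{L}(l)$ then yields
\[\varphi\downharpoonright_{L}(\tau,\mathfrak{z}_{L})=\sum_{\substack{n\in\N_{0}\\ l'\in\frac{1}{2}L^{\vee}}}\Bigl(\sum_{p_{L}(l)=l'}f(n,l)\Bigr)e^{2\pi i(n\tau+(l',\mathfrak{z}_{L}))}\,,\]
which is of the required shape, the inner sums being finite because $L_{M}^{\perp}$ is positive definite.

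Finally, to pin down holomorphicity and the cusp property I would use the orthogonal (Pythagorean) identity $(l,l)=(l',l')+(l-l',l-l')$ with $l-l'\in L_{M}^{\perp}\otimes\R$. Since $L_{M}^{\perp}$ is positive definite, $(l-l',l-l')\geq 0$, whence $(l,l)\geq(l',l')$ and therefore $2nt-(l',l')\geq 2nt-(l,l)$. Thus every index $(n,l')$ with $2nt-(l',l')<0$ receives only contributions from indices $(n,l)$ with $2nt-(l,l)<0$, which vanish for a holomorphic $\varphi$; replacing the threshold $<0$ by $\leq 0$ gives the same conclusion for cusp forms, so a cusp form pulls back to a cusp form. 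The computations are all routine bookkeeping; the only genuine input is the positive-definiteness of $L_{M}^{\perp}$, which simultaneously guarantees that the projected support obeys the correct hyperbolic bound and that the regrouped Fourier coefficients are finite sums, so I expect no real obstacle beyond keeping the orthogonal decomposition straight.
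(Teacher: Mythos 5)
Your verification is correct, and it is worth noting that the paper itself offers no proof of this lemma at all: it is quoted verbatim from \cite{CG}, Proposition 3.1. So your argument is not a variant of the paper's proof but a self-contained substitute for the citation, and it supplies exactly the right ingredients. Conditions (i) and (ii) do drop out of the corresponding laws for $\varphi$ once one observes that $\mathfrak{z}_{L}\mapsto\mathfrak{z}_{L}\oplus 0$ is compatible with the scaling by $(c\tau+d)^{-1}$ and that the orthogonal decomposition makes all the inner products in the automorphy factors collapse to their $L$-values; the identification of the Heisenberg element with the image of $[x,y:-(x,y)/2]$ under $\Gamma^{J}(L)\hookrightarrow\Gamma^{J}(M)$ is precisely what makes the character come out as $\chi|_{\Gamma^{J}(L)}$. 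Your two structural observations — that the orthogonal projection $p_{L}$ maps $\tfrac{1}{2}M^{\vee}$ into $\tfrac{1}{2}L^{\vee}$ because $(2p_{L}(l),\lambda)=(2l,\lambda)\in\Z$ for $\lambda\in L$, and that positive definiteness of $L_{M}^{\perp}$ gives $(l,l)\geq(p_{L}(l),p_{L}(l))$ — are the two points where something could go wrong, and you handle both; the same positivity also makes each fibre $\{l:p_{L}(l)=l',\,2nt-(l,l)\geq 0\}$ finite, so the regrouped coefficients are honest finite sums and absolute convergence justifies the rearrangement. The monotonicity $2nt-(l',l')\geq 2nt-(l,l)$ then yields holomorphicity of the pullback and the preservation of cusp forms in one stroke. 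I see no gap.
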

\begin{definition}\label{def:pullbak notation}
 Let $L\leq M$ be a sublattice of $M$ such that $\rank L < \rank M$ and $\varphi\in J_{k,M;t}^{\textit{(weak)}}$ be a Jacobi form of weight $k$ and index $t$.  Let $\psi\in J_{k,L;t}^{\textit{(weak)}}$. We say that $\psi$ is a \textit{pullback of} $\varphi$ if there exists some $\alpha\in \C^{\times}$ such that 
\(\psi=\alpha \cdot\varphi\downharpoonright_{L}\,.\)
In this case we use the notation $\varphi\to \psi$\,. We set $\varphi\downharpoonright_{L}:=\varphi$ if $\rank L =\rank M$.
\end{definition}
 We define
\[\vartheta_{L}\,:\,\mathbb{H}\times(L\otimes\C)\to\C\quad,\quad \vartheta_{L}(\tau,\mathfrak{z}):=\prod_{j=1}^{m}{\vartheta(\tau,(\mathfrak{z},\varepsilon_{j}))}\,.\]
This leads us to the notion of theta type Jacobi forms.
\begin{definition}\label{def:theta type JF}
 Let $L\subseteq \R^{m}$ be a positive definite even lattice and $\varphi\in J_{k,L;t}$. We say that $\varphi$ is of \textit{theta type} if there exists a sublattice $L'\subseteq L$, $\alpha\in\C^{\times}$ and integers $a,b\in\Z_{\geq 0}$ such that
\[\varphi\downharpoonright_{L'} (\tau,\mathfrak{z}')=\alpha\cdot\eta(\tau)^{a}\,\vartheta_{L'}(\tau,\mathfrak{z}')^{b}\,.\] 
Note that $\Delta_{12}(\tau)=\eta(\tau)^{24}$ is of theta type because $J_{k,L;0}$ is isomorphic to the space of weight $k$ modular forms for the group $\sl$.
\end{definition}
 For $\mathfrak{z}\in (mA_{1})\otimes\C$ we write 
\begin{equation}\label{choice of coordinates for mA1}
\mathfrak{z}=\sum_{j=1}^{m}{z_{j}\varepsilon_{j}}=:(z_{1},\dots,z_{m})\quad,\quad z_{j}\in\C\,. 
\end{equation}
For the next Proposition we note that
\[\vartheta_{4A_{1}}(\tau,\mathfrak{z})=\vartheta(\tau,z_{1})\vartheta(\tau,z_{2})\vartheta(\tau,z_{3})\vartheta(\tau,z_{4})\in J_{2,4A_{1};1/2}(\chi_{2})\]
has already been constructed in \cite{G1} where $\chi_{2}$ is a binary character. In the next Proposition the square of this function is denoted by $\psi_{4,4A_{1}}$. In \cite[Proposition 3.7]{Wo1} the author constructs a tower of theta type Jacobi forms for the lattice $4A_{1}$.
\begin{prop}\label{prop:tower for 4A1}
There exists the following diagram of theta type Jacobi forms for the $A_{1}$-tower
 \begin{center}
 \begin{tikzpicture}[scale=0.5,node distance=2cm,auto]
  \node[] (delta) at (0cm,0cm) {$\textcolor{black}{\Delta_{12}}$};
   \node (12+A1) at (0cm,-2.5cm) {$\textcolor{black}{\varphi_{12,A_{1}}}$};
   \node (12+2A1) at (0cm,-5cm) {$\textcolor{black}{\varphi_{12,2A_{1}}}$};
   \node (12+3A1) at (0cm,-7.5cm) {$\textcolor{black}{\varphi_{12,3A_{1}}}$};
   \node (12+4A1) at (0cm,-10cm) {$\varphi_{12,4A_{1}}$};
   \node[] (10+A1) at (5cm,-2.5cm) {$\textcolor{black}{\psi_{10,A_{1}}}$};
   \node[] (10+2A1) at (5cm,-5cm) {$\textcolor{black}{\varphi_{10,2A_{1}}}$};
   \node[] (10+3A1) at (5cm,-7.5cm) {$\textcolor{black}{\varphi_{10,3A_{1}}}$};
   \node[] (10+4A1) at (5cm,-10cm) {$\varphi_{10,4A_{1}}$};
   \node[] (8+2A1) at (10cm,-5cm) {$\textcolor{black}{\psi_{8,2A_{1}}}$};
   \node[] (8+3A1) at (10cm,-7.5cm) {$\textcolor{black}{\varphi_{8,3A_{1}}}$};
   \node[] (8+4A1) at (10cm,-10cm) {$\varphi_{8,4A_{1}}$};
   \node[] (6+3A1) at (15cm,-7.5cm) {$\textcolor{black}{\psi_{6,3A_{1}}}$};
  \node[] (6+4A1) at (15cm,-10cm) {$\varphi_{6,4A_{1}}$};
  \node[] (4+4A1) at (20cm,-10cm) {$\psi_{4,4A_{1}}$};
  \node[] () at (19.0cm,-8.00cm) {$\left.\frac{\partial^{2}}{\partial z_{4}^{2}}\right|_{z_{4}=0}$};
  \node[] () at (14.0cm,-5.50cm) {$\left.\frac{\partial^{2}}{\partial z_{3}^{2}}\right|_{z_{3}=0}$};
  \node[] () at (9.0cm,-3.00cm) {$\left.\frac{\partial^{2}}{\partial z_{2}^{2}}\right|_{z_{2}=0}$};
  \node[] () at (4.0cm,-0.50cm) {$\left.\frac{\partial^{2}}{\partial z_{1}^{2}}\right|_{z_{1}=0}$};
\draw[<-](delta) to node {} (12+A1);
\draw[<-](12+A1) to node {} (12+2A1);
\draw[<-](12+2A1) to node {} (12+3A1);
\draw[<-](12+3A1) to node {} (12+4A1);
 \draw[<-,dashed](delta) to node {} (10+A1);
\draw[<-](10+A1) to node {} (10+2A1);
\draw[<-](10+2A1) to node {} (10+3A1);
\draw[<-](10+3A1) to node {} (10+4A1);
 \draw[<-,dashed](10+A1) to node {} (8+2A1);
\draw[<-](8+2A1) to node {} (8+3A1);
\draw[<-](8+3A1) to node {} (8+4A1);
 \draw[<-,dashed](8+2A1) to node {} (6+3A1);
 \draw[<-,dashed](6+3A1) to node {} (4+4A1);
\draw[<-](6+3A1) to node {} (6+4A1);
\end{tikzpicture} 
\end{center}
where $\psi_{k,mA_{1}},\varphi_{k,mA_{1}}\in J_{k,mA_{1};1}$. Except for the last line all forms are cusp forms. 
\end{prop}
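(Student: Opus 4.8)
The plan is to produce every node of the diagram explicitly from its two corners and then read off the arrows. Write $m_{0}=6-k/2$ for the index at the top of the weight-$k$ column. I claim that each entry is, up to a nonzero constant,
\[
\eta(\tau)^{24-6m_{0}}\prod_{j=1}^{m_{0}}\vartheta(\tau,z_{j})^{2}\prod_{j=m_{0}+1}^{m}\phi_{0,1}(\tau,z_{j})
=\Delta_{12}(\tau)\prod_{j=1}^{m_{0}}\phi_{-2,1}(\tau,z_{j})\prod_{j=m_{0}+1}^{m}\phi_{0,1}(\tau,z_{j}),
\]
where $\phi_{-2,1}=\vartheta^{2}/\eta^{6}$ and $\phi_{0,1}$ are the two basic weak Jacobi forms of Eichler--Zagier (weights $-2$ and $0$, index $1$, with $\phi_{0,1}(\tau,0)=12$); the $\psi$-forms are the case $m=m_{0}$, and $\Delta_{12}=\eta^{24}$ is the case $k=12$. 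The second expression shows at once that the weight is $12-2m_{0}=k$, the index is $1$, and --- since $\Delta_{12}$ and the $\phi$'s carry trivial multiplier --- that the character is trivial, so each entry lies in $J_{k,mA_{1};1}$ as claimed (here $mA_{1}=A_{1}\perp\dots\perp A_{1}$, and a product over independent elliptic variables is a Jacobi form for the orthogonal sum).

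For the dashed arrows I would use the first expression. The top form $\psi_{k,m_{0}A_{1}}$ factors as $(\text{rest})\cdot\vartheta(\tau,z_{m_{0}})^{2}$ with the first factor free of $z_{m_{0}}$, and $\vartheta$ has a simple zero at $z=0$; Jacobi's derivative formula $\frac{\partial}{\partial z}\vartheta(\tau,z)\big|_{z=0}=c\,\eta(\tau)^{3}$ then gives $\frac{\partial^{2}}{\partial z_{m_{0}}^{2}}\vartheta(\tau,z_{m_{0}})^{2}\big|_{z_{m_{0}}=0}=2c^{2}\eta(\tau)^{6}$. Hence the operator $\frac{\partial^{2}}{\partial z_{m_{0}}^{2}}\big|_{z_{m_{0}}=0}$ multiplies the surviving theta-product by $\eta^{6}$ and deletes one variable, carrying $\psi_{k,m_{0}A_{1}}$ to $\psi_{k+2,(m_{0}-1)A_{1}}$ (and $\psi_{10,A_{1}}$ to $\Delta_{12}$). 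Because of this factorisation the result is visibly again a product of Jacobi forms, so no heat-operator correction is needed to stay inside the space of Jacobi forms.

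For the solid arrows I would invoke the pullback Lemma: setting $z_{m}=0$ in $\varphi_{k,mA_{1}}$ replaces the last factor $\phi_{0,1}(\tau,z_{m})$ by $\phi_{0,1}(\tau,0)=12$, so $\varphi_{k,mA_{1}}\!\downharpoonright_{(m-1)A_{1}}$ is a nonzero multiple of $\varphi_{k,(m-1)A_{1}}$ --- or of the column top $\psi_{k,m_{0}A_{1}}$ once $m-1=m_{0}$ --- which is exactly the relation $\varphi\to\psi$. The theta-type property is then read off the same description: restricting $\varphi_{k,mA_{1}}$ to the sublattice $m_{0}A_{1}=\langle\varepsilon_{1},\dots,\varepsilon_{m_{0}}\rangle$ (i.e.\ setting $z_{m_{0}+1}=\dots=z_{m}=0$) removes precisely the $\phi_{0,1}$ factors and leaves a constant times $\eta^{24-6m_{0}}\vartheta_{m_{0}A_{1}}^{2}$, which has the shape $\eta^{a}\vartheta_{L'}^{b}$ demanded by Definition~\ref{def:theta type JF} (with $b=0$, recovering $\Delta_{12}$, in the $k=12$ column).

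The one genuinely quantitative point is holomorphy and the cusp condition, which is not automatic because the $\phi_{0,1}$ are only weak. I would compute the hyperbolic norm $2nt-(l,l)$ on the product directly: for $mA_{1}$ a Fourier index $l=(l_{1},\dots,l_{m})$ has $(l,l)=\tfrac12\sum_{j}l_{j}^{2}$, this norm is additive over the independent elliptic variables, the minimal per-factor contributions are $0$ for each $\vartheta(\tau,z_{j})^{2}$ and $-\tfrac12$ for each $\phi_{0,1}(\tau,z_{j})$, and multiplication by $\eta^{24-6m_{0}}$ raises $n$, hence $2nt$ (with $t=1$), by $(24-6m_{0})/12=2-m_{0}/2$. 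These combine to a global minimum $2-m_{0}/2-(m-m_{0})/2=2-m/2$, independent of the weight. Thus $2nt-(l,l)\ge 0$ everywhere, so each entry is a \emph{holomorphic} Jacobi form; it is a cusp form exactly when $2-m/2>0$, i.e.\ for $m\le 3$, while in the bottom line $m=4$ the minimum $0$ is attained (on the coefficient with $n=1$ and $l=(\pm1,\dots,\pm1)$), so those forms are holomorphic but not cuspidal. This norm bookkeeping is the main obstacle; the rest is the product structure together with Jacobi's derivative formula.
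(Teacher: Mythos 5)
Your construction is correct and is essentially the paper's own: the paper defines theta type forms precisely as twists of powers of $\vartheta$ by the Eichler--Zagier form $\phi_{0,1}$ and powers of $\eta$, and it delegates the proof of this proposition to \cite[Proposition 3.7]{Wo1}, where the forms are exactly the products $\eta^{24-6m_{0}}\prod_{j\le m_{0}}\vartheta(\tau,z_{j})^{2}\prod_{j>m_{0}}\phi_{0,1}(\tau,z_{j})$ you write down. Your verification of the arrows (pullback via $\phi_{0,1}(\tau,0)=12$, the dashed arrows via Jacobi's derivative formula applied to the factored $\vartheta^{2}$, so no quasimodular correction is needed) and the hyperbolic-norm count $2-m/2$ distinguishing cusp forms from the $m=4$ row match the intended argument.
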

We recall that there exists a unique (up to isomorphism) positive definite even lattice $E_{8}$ in dimension $8$ which is unimodular. 
Following \cite{G} we can attach a Jacobi theta series to $E_{8}$:
\[\Theta_{E_{8}}(\tau,\mathfrak{z})=\sum_{l\in E_{8}}{\exp(\pi i (l,l)\tau+2\pi i(l,\mathfrak{z}))}\,.\]
Then $\Theta_{E_{8}}\in J_{4,E_{8};1}$ is a singular Jacobi form for $E_{8}$.   We fix a chain of embeddings 
\[A_{1}\hookrightarrow 2A_{1}\hookrightarrow 3A_{1}\hookrightarrow 4A_{1}\hookrightarrow E_{8}\,.\]
We will investigate the pullbacks
\[\epsilon_{4,mA_{1}}\index{$\epsilon_{4,mA_{1}}$}=\Theta_{E_{8}}\downharpoonright_{mA_{1}}\in J_{4,mA_{1};1}\,.\]
\begin{prop}\label{prop:modularity of the E8 pullbacks for A1 tower}
 Let $\sigma\in \operatorname{O}(mA_{1})$ for $m\in\{1,2,3, 4\}$. Then $\sigma$ can be extended to $\operatorname{O}(E_{8})$ and for any sublattice $L\leq E_{8}$ where $L\cong mA_{1}$ there exists some $g\in\operatorname{O}(E_{8})$ such that $g.L=mA_{1}$. Moreover $\epsilon_{4,mA_{1}}$ is invariant under the transformation induced by $\sigma$ such that for all $\tau,\mathfrak{z}$ one has \[\epsilon_{4,mA_{1}}(\tau,\sigma.\mathfrak{z})=\epsilon_{4,mA_{1}}(\tau,\mathfrak{z})\,.\]
\end{prop}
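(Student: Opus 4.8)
The three assertions are tiered, and I would prove them in the order: transitivity provides a convenient model, the model yields the extension, and the extension yields the invariance. The logically cleanest part is the reduction of the final (invariance) statement to the extension statement. Suppose $\sigma\in\operatorname{O}(mA_1)$ has been extended to some $g\in\operatorname{O}(E_8)$ with $g|_{mA_1}=\sigma$. Since the argument $\mathfrak{z}$ of the pullback lies in $mA_1\otimes\C$ and $g$ preserves the sublattice $mA_1$, one has $g.\mathfrak{z}=\sigma.\mathfrak{z}$, and the definition $\epsilon_{4,mA_1}=\Theta_{E_8}\downharpoonright_{mA_1}$ gives
\[
\epsilon_{4,mA_1}(\tau,\sigma.\mathfrak{z})=\sum_{l\in E_8}\exp\bigl(\pi\ii(l,l)\tau+2\pi\ii(l,g.\mathfrak{z})\bigr).
\]
Re-indexing the sum by $l\mapsto g.l$, which is a bijection of $E_8$ with $(g.l,g.l)=(l,l)$ and $(g.l,g.\mathfrak{z})=(l,\mathfrak{z})$, leaves the sum unchanged and equal to $\epsilon_{4,mA_1}(\tau,\mathfrak{z})$. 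Hence the whole proposition rests on two geometric inputs: that $\sigma$ extends, and that all copies of $mA_1$ are interchangeable under $\operatorname{O}(E_8)$.

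For the extension I would use the identification $\operatorname{O}(mA_1)\cong(\Z/2\Z)^m\rtimes\mathcal{S}_m$ as the group of signed permutations of the roots $\pm\varepsilon_1,\dots,\pm\varepsilon_m$, generated by the sign changes $\varepsilon_i\mapsto-\varepsilon_i$ and the transpositions $\varepsilon_i\leftrightarrow\varepsilon_j$. Each sign change is the restriction of the reflection $s_{\varepsilon_i}$ in the root $\varepsilon_i$: this reflection automatically preserves the even lattice $E_8$ and fixes every $\varepsilon_j$ with $j\neq i$. The transpositions are the genuinely nontrivial point, because $\varepsilon_i-\varepsilon_j$ has norm $4$ and is not a root, so the naive reflection $s_{\varepsilon_i-\varepsilon_j}$ need not preserve $E_8$. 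Here I would pass to the standard model $E_8=D_8\cup(D_8+s)$ with $s=(\tfrac12,\dots,\tfrac12)$ in Euclidean coordinates $e_1,\dots,e_8$ and embed $mA_1$ by $\varepsilon_i\mapsto e_i-e_{i+4}$. Then each permutation in $\mathcal{S}_m$ is realised by the simultaneous coordinate permutation of $\{e_1,\dots,e_4\}$ and $\{e_5,\dots,e_8\}$, and each sign change by the coordinate transposition $e_i\leftrightarrow e_{i+4}$; all of these are coordinate permutations and therefore lie in $\operatorname{O}(E_8)$. This exhibits all of $\operatorname{O}(mA_1)$ as restrictions of elements of $\operatorname{O}(E_8)$.

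It remains to prove that any $L\le E_8$ with $L\cong mA_1$ is carried to the fixed copy by some element of $\operatorname{O}(E_8)$; this both licenses the choice of model above and is the hard part. I would induct on $m$ through orthogonal complements of roots, using that $W(E_8)\le\operatorname{O}(E_8)$ is transitive on roots, that the orthogonal complement of a root is of type $E_7$, that of a second orthogonal root of type $D_6$, and that $W(E_7)$ and $W(D_6)$ act transitively on their roots; this settles $m\le 3$. The main obstacle is $m=4$: the roots orthogonal to a fixed $3A_1$ form the reducible system $A_1\oplus D_4$, so the fourth orthogonal root is no longer unique up to the relevant stabiliser, and one must decide which configurations are actually $\operatorname{O}(E_8)$-equivalent. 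I would control this by passing from the root configuration to the sublattice $L$ and its saturation, computing the orthogonal complement lattice $L^{\perp}_{E_8}$, and invoking that in the unimodular lattice $E_8$ a primitive embedding is determined up to $\operatorname{O}(E_8)$ by the isometry class of the complement together with the glue between $L$ and $L^{\perp}_{E_8}$; this reduces the statement to a finite verification and is precisely the step that demands care in the case $m=4$.
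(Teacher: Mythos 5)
Your reduction of the invariance statement to the extension statement is correct and is exactly how the paper uses the proposition, and for $m\le 3$ your induction through the complements $E_7$ and $D_6$ via Weyl-group transitivity on roots is a legitimate, more hands-on alternative to the paper's argument. (The paper instead runs everything through Nikulin's criterion: $\sigma$ extends to $\operatorname{O}(E_8)$ provided $\operatorname{O}(K_m)\to\operatorname{O}(D(K_m))$ is surjective for the orthogonal complement $K_m$, and it verifies this for $K_1=E_7$, $K_2=D_6$, $K_3=A_1\oplus D_4$, $K_4=4A_1$ by computing discriminant forms.) The genuine gap is at $m=4$, and it is worse than the ``step that demands care'' you defer at the end: your explicit model $\varepsilon_i\mapsto e_i-e_{i+4}$ lies in the wrong $\operatorname{O}(E_8)$-orbit. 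The half-sum $\tfrac{1}{2}\sum_{i=1}^{4}(e_i-e_{i+4})=\tfrac{1}{2}(1,1,1,1,-1,-1,-1,-1)$ belongs to $D_8+s\subset E_8$, so this copy of $4A_1$ is imprimitive; its saturation is a $D_4$ and its orthogonal complement is $D_4$, whereas the embedding the paper fixes is the primitive one with complement $K_4\cong 4A_1$ (this is what the paper's table asserts and what the later constructions rely on). Primitive and imprimitive copies of $4A_1$ are not conjugate under $\operatorname{O}(E_8)$, so the transitivity claim you planned to use to ``license'' the model is actually false for $m=4$, and your coordinate-permutation construction proves the extension property only for the imprimitive copy, not for the lattice on which $\epsilon_{4,4A_1}$ is defined.

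For the primitive copy the difficulty is real and cannot be waved away by choosing better coordinates of the same naive kind: in the primitive model $\langle e_1-e_2,\,e_1+e_2,\,e_3-e_4,\,e_5-e_6\rangle$ the transposition of the first two roots would have to send $e_2$ to $-e_2$, and a single sign change preserves $D_8$ but not $D_8+s$, hence is not an isometry of $E_8$; the required extension exists but only for lattice-theoretic reasons. Deciding which elements of $\operatorname{O}(4A_1)$ extend, and which configurations of four pairwise orthogonal roots are $\operatorname{O}(E_8)$-equivalent, is precisely the content of the paper's computation (surjectivity of $\operatorname{O}(K_m)\to\operatorname{O}(D(K_m))$, including $\operatorname{O}(4A_1)\to\operatorname{O}(D(4A_1))$ for $K_4$), i.e.\ the very machinery you name in your last sentence but do not carry out. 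As written, both the extension and the transitivity assertions remain unproved in the case $m=4$, and carrying out your own proposed saturation analysis would in fact have revealed that your chosen model must be discarded.
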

\begin{proof}
We denote by $K_{m}:=(mA_{1})_{E_{8}}^{\perp}$ the orthogonal complement of $mA_{1}$ in $E_{8}$. For the proof of the statement we will have to investigate the discriminant form 
\[q: D(L)\to\Q/2\Z\quad,\quad x +L \mapsto (x,x) +2\Z\,.\] 
In the following list one can find a root system which is isomorphic to $K_{m}$
\begin{center}
\begin{tabular}{l|cccc}
$m$ & 1 & 2 & 3 & 4\\\hline
$K_{m}$ & $E_{7}$ & $D_{6}$ & $A_{1}\oplus D_{4}$ & $4A_{1}$
\end{tabular}\,.
\end{center}
From \cite[Proposition 1.6.1]{N} we know that $\sigma$ can be extended to $\operatorname{O}(E_{8})$ 
\begin{equation}\label{Nikulins condition}
 \text{if the natural homomorphism }\operatorname{O}(K_{m})\to \operatorname{O}(D(K_{m})) \text{ is surjective.}
\end{equation}
According to \cite[Chapter 4, Section 8.2]{CS} we have $\operatorname{O}(D(E_{7}))=\{\operatorname{id}\}$ because $E_{7}^{\vee}=E_{7}\cup (v+E_{7})$ where $q(v+E_{7})=\frac{3}{2}+2\Z$
which grants the surjectivity in this case.
The lattice $D_{m},m\geq 3$ can be realized as the $\Z$-module with basis
\[\varepsilon_{2}+\varepsilon_{1},\varepsilon_{2}-\varepsilon_{1},\varepsilon_{3}-\varepsilon_{2},\dots, \varepsilon_{m}-\varepsilon_{m-1}\,.\]
Moreover this lattice can be described as the following subset in  $\Z^{m}$:
\[D_{m}=\{x\in\Z^{m}\,|\,x_{1}+\dots+x_{m} =  0 \bmod 2\}\]
We define $w:=\frac{1}{2}(\varepsilon_{1}+\dots+\varepsilon_{m})\in D_{m}^{\vee}$. The values of the discriminant form for the representatives of $D(D_{m})$ are given as follows 
\begin{center}
\begin{tabular}{l|llll}
$l$ & $0$ & $\varepsilon_{1}$ & $w$ & $\varepsilon_{1}+w$\\\hline
$q(l)$ & $0+2\Z$ & $1+2\Z$ & $\frac{m}{4}+2\Z$ & $\frac{m}{4}+2\Z$
\end{tabular}
\end{center}
If $m\neq 4\bmod 8 $ one has $\operatorname{O}(D(D_{m}))\cong \mathcal{C}_{2}$ and this group is generated by the permutation of the classes represented by $w$ and $\varepsilon_{1}+w$. This element is induced by $\sigma_{\varepsilon_{1}}\in\operatorname{O}(D_{m})$, the reflection at the hyperplane perpendicular to $\varepsilon_{1}$. Moreover $\operatorname{O}(D(D_{4}))\cong \mathcal{S}_{3}$. In this case the group is generated by the permutation of the classes $w$ and $\varepsilon_{1}+w$ and the permutation of $\varepsilon_{1}$ and $\varepsilon_{1}+w$. The latter element is induced by the reflection $\sigma_{w}\in\operatorname{O}(D_{4})$. Finally we note that the natural homomorphism \[\operatorname{O}(mA_{1})\to\operatorname{O}(D(mA_{1}))\]
is surjective. Summarizing these considerations we see that the assumption (\ref{Nikulins condition}) is satisfied for each $m=1,\dots,4$. This proves the first part and the invariance property of the pullbacks as a direct consequence.
\end{proof}
In the next step we construct a differential operator. This operator is well-known and a treatment can be found in \cite{ChoKi} for the general case or in \cite{EZ} for classical Jacobi forms.   
The heat operator is given as 
\[H=4\pi i \det (S) \frac{\partial}{\partial \tau}-\det (S)\,S^{-1}\left[\frac{\partial}{\partial \mathfrak{z}}\right]\,.\]
We recall the definition of the quasi-modular Eisenstein series of weight 2
\begin{equation}\label{G2 Fourier expansion}
 G_{2}(\tau)=-\frac{1}{24}+\sum_{n\geq 1}{\sigma_{1}(n)e^{2\pi in\tau }}\quad,\quad  \sigma_{k}(n)=\sum_{d\mid n}{d^{k}}
\end{equation}
which transforms under $\sl$ as
\[G_{2}\left(\frac{a\tau+b}{c\tau+d}\right)=(c\tau+d)^{2}G_{2}(\tau)-\frac{c(c\tau+d)}{4\pi i}\,.\]
and denote by $G_{2}\bullet$ the operator which multiplies a function by $G_{2}$. By virtue of the transformation property of $G_{2}$ we obtain a quasi-modular operator. We fix the notation
\[r^{l}:=\exp(2\pi i(l,\mathfrak{z}))\quad,\quad l\in L^{\vee},\mathfrak{z}\in L\otimes\C\,.\]
\begin{lemma}\label{lemma:heat operator with automorohic correction}
 For every $k\in\N$ there is a quasi-modular differential operator \newline $H_{k}:\,J_{k,L;1}\to J_{k+2,L;1}$ defined by the formula
\[\index{$H_{k}$}H_{k}=H+(4\pi i)^{2}\det(S)\left(k-\frac{m}{2}\right)G_{2}\bullet\]
where $m=\operatorname{rank}(L)$.  
The operator $H$ acts on $q^{n}r^{l}$, $n\in\N,l\in L^{\vee}$ by multiplication with \((2\pi i)^{2}\det(S)\,(2n-(l,l))\,.\)  
\end{lemma}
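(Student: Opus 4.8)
The plan is to prove the two assertions in reverse order: first the eigenvalue formula for the bare heat operator $H$ on Fourier monomials, and then, using this together with the theta decomposition of index-$1$ Jacobi forms, the mapping property of the corrected operator $H_{k}$.

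For the eigenvalue formula I would simply differentiate. Writing $l=\sum_{i}c_{i}\varepsilon_{i}\in L^{\vee}$ and $\lambda_{j}=(l,\varepsilon_{j})$, so that $\lambda=Sc$ and $(l,l)=c\transpose Sc$, and using $r^{l}=\exp(2\pi i\sum_{j}\lambda_{j}z_{j})$, one obtains
\[S^{-1}\Bigl[\tfrac{\partial}{\partial\mathfrak{z}}\Bigr]r^{l}=(2\pi i)^{2}(\lambda\transpose S^{-1}\lambda)\,r^{l}=(2\pi i)^{2}(c\transpose Sc)\,r^{l}=(2\pi i)^{2}(l,l)\,r^{l},\]
since $\lambda\transpose S^{-1}\lambda=(Sc)\transpose S^{-1}(Sc)=c\transpose Sc$. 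Combined with $\partial_{\tau}q^{n}=2\pi i\,n\,q^{n}$ and the identity $4\pi i\cdot 2\pi i=2(2\pi i)^{2}$, this yields $H(q^{n}r^{l})=(2\pi i)^{2}\det(S)(2n-(l,l))\,q^{n}r^{l}$, which is the second assertion and in fact holds for any exponent, integral or not.

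For the mapping property I would pass to the theta decomposition $\varphi=\sum_{\mu\in D(L)}h_{\mu}(\tau)\,\Theta_{\mu}(\tau,\mathfrak{z})$ valid for $\varphi\in J_{k,L;1}$, where the $\Theta_{\mu}$ are the index-$1$ theta functions of $L$ and $(h_{\mu})_{\mu}$ is a modular form of weight $k-\tfrac{m}{2}$ for the Weil representation attached to $L$. Every monomial of $\Theta_{\mu}$ has the shape $q^{(l,l)/2}r^{l}$, so the eigenvalue $2\cdot\tfrac{(l,l)}{2}-(l,l)=0$ from the first step gives $H\Theta_{\mu}=0$. Since $H$ differentiates only in $\tau$ and $\mathfrak{z}$ and the $h_{\mu}$ are independent of $\mathfrak{z}$, the heat operator collapses to $H\varphi=4\pi i\det(S)\sum_{\mu}h_{\mu}'\,\Theta_{\mu}$: it merely differentiates the coefficient functions. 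In particular $H\varphi$ is again in theta-decomposed form with the same $\Theta_{\mu}$, so it automatically obeys the Heisenberg law (ii) for index $1$, and the holomorphy/support condition (iii) is inherited because the heat part multiplies each coefficient by the nonnegative number $2n-(l,l)$.

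What remains is the elliptic transformation (i), and here I would invoke the Serre derivative: using the anomaly $-\tfrac{c(c\tau+d)}{4\pi i}$ of $G_{2}$ recorded above, one checks that $\tfrac{df}{d\tau}+4\pi i\kappa\,G_{2}f$ is modular of weight $\kappa+2$ whenever $f$ is modular of weight $\kappa$. Applying this with $\kappa=k-\tfrac{m}{2}$ to each $h_{\mu}$ and reassembling gives
\[H_{k}\varphi=H\varphi+(4\pi i)^{2}\det(S)\bigl(k-\tfrac{m}{2}\bigr)G_{2}\varphi=4\pi i\det(S)\sum_{\mu}\Bigl(h_{\mu}'+4\pi i\bigl(k-\tfrac{m}{2}\bigr)G_{2}h_{\mu}\Bigr)\Theta_{\mu},\]
whose coefficient functions are modular of weight $k-\tfrac{m}{2}+2$, so that $H_{k}\varphi\in J_{k+2,L;1}$. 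I expect the delicate point to be precisely this last step: verifying that the scalar Serre-derivative constant carries over unchanged to the vector-valued Weil-representation setting and that it matches the normalization $(4\pi i)^{2}\det(S)(k-\tfrac{m}{2})$ forced by the heat operator. This matching is exactly what pins down the coefficient of $G_{2}\bullet$ in the definition of $H_{k}$.
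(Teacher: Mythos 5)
Your argument is correct, but it is genuinely different from what the paper does: the paper's entire proof is a deferral (``the first part can be deduced from the considerations in [ChoKi] and the second part is a direct verification''), i.e.\ the modularity of $H_{k}$ is outsourced to Choie--Kim's general computation, which verifies the transformation behaviour of the heat operator under the generators of the Jacobi group directly and checks that the $G_{2}$-correction cancels the resulting anomaly for arbitrary index. You instead exploit the index-$1$ hypothesis: the eigenvalue formula (which you verify exactly as the paper intends, by direct differentiation, and which is correct --- $\lambda\transpose S^{-1}\lambda=c\transpose Sc=(l,l)$ and $4\pi i\cdot 2\pi i n=2(2\pi i)^{2}n$) shows that $H$ annihilates the index-$1$ theta functions $\Theta_{\mu}$, so via the theta decomposition the whole operator collapses onto the coefficient system $(h_{\mu})_{\mu}$ of weight $k-\tfrac{m}{2}$, where it becomes the Serre derivative $h_{\mu}'+4\pi i(k-\tfrac{m}{2})G_{2}h_{\mu}$; your check that this constant matches the stated $(4\pi i)^{2}\det(S)(k-\tfrac{m}{2})$ after factoring out $4\pi i\det(S)$ is right, and the Serre-derivative computation does carry over verbatim to the vector-valued setting since the Weil representation contributes only a constant matrix under differentiation. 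The trade-off: your route is self-contained and conceptually transparent but is tied to index $1$ (for higher index $H\Theta_{\mu}\neq 0$ and the reduction fails), whereas the cited Choie--Kim argument works for general index at the cost of a longer direct verification. One small point you gloss over: preservation of the cusp/holomorphy condition also needs the $G_{2}\bullet$ term, not just the heat part, but since $G_{2}$ has nonnegative $q$-powers this is immediate.
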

\begin{proof}
The first part can be deduced from the considerations in \cite{ChoKi} and the second part is a direct verification.
\end{proof}
Using this operator we define 
\[\index{$\epsilon_{6,mA_{1}}$}\epsilon_{6,mA_{1}}=H_{4}(\epsilon_{4,mA_{1}})\in J_{6,mA_{1};1}\,, m\in\{1,2,3,4\}\,.\]
These functions inherit the invariance under coordinate permutations from $\epsilon_{4,mA_{1}}$.
\begin{cor}\label{cor:differential operator preserves invariance for NA1}
 Let $m\in\{1,2,3,4\}$. For all $\sigma\in \operatorname{O}(mA_{1})$ we have 
\[\epsilon_{6,mA_{1}}(\tau,\sigma. \mathfrak{z})=\epsilon_{6,mA_{1}}(\tau,\mathfrak{z})\,.\] In particular $\epsilon_{6,mA_{1}}$ is invariant with respect to the action of $\operatorname{O}(D(mA_{1}))\cong \mathcal{S}_{m}$\,. 
\end{cor}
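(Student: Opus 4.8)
The plan is to reduce the invariance of $\epsilon_{6,mA_{1}}$ to that of $\epsilon_{4,mA_{1}}$, which is already granted by Proposition \ref{prop:modularity of the E8 pullbacks for A1 tower}, by showing that the corrected heat operator $H_{4}$ commutes with the substitution $\mathfrak{z}\mapsto\sigma.\mathfrak{z}$ for every $\sigma\in\operatorname{O}(mA_{1})$. I would write the lattice automorphism $\sigma$ as a matrix $A\in\operatorname{GL}(m,\Z)$ acting on the coordinate vector from \eqref{choice of coordinates for mA1}, so that $\sigma.\mathfrak{z}$ has coordinate vector $A(z_{1},\dots,z_{m})\transpose$. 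Since $\sigma$ is an isometry of $(mA_{1},S)$ whose Gram matrix is $S=2I_{m}$, one has $A\transpose S A = S$.

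First I would treat the three pieces of $H_{4}=H+(4\pi i)^{2}\det(S)(4-\tfrac{m}{2})G_{2}\bullet$ separately. The operator $\partial/\partial\tau$ and multiplication by $G_{2}(\tau)$ only involve $\tau$, on which $\sigma$ acts trivially, so these commute with the substitution at once; the scalar $\det(S)$ is of course unaffected. It remains to handle the second–order part $\det(S)\,S^{-1}[\partial/\partial\mathfrak{z}]=\det(S)\sum_{i,j}(S^{-1})_{ij}\,\partial^{2}/(\partial z_{i}\partial z_{j})$.

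The key step is the identity $A\,S^{-1}A\transpose=S^{-1}$, which follows from $A\transpose S A=S$ by inverting. Setting $\psi(\mathfrak{z})=\varphi(\sigma.\mathfrak{z})$ and applying the chain rule with $w=Az$ gives
\[
\sum_{i,j}(S^{-1})_{ij}\frac{\partial^{2}\psi}{\partial z_{i}\partial z_{j}}
=\sum_{k,l}(A S^{-1}A\transpose)_{kl}\left(\frac{\partial^{2}\varphi}{\partial w_{k}\partial w_{l}}\right)(\sigma.\mathfrak{z})
=\left(S^{-1}\Big[\frac{\partial}{\partial\mathfrak{z}}\Big]\varphi\right)(\sigma.\mathfrak{z})\,,
\]
so the Laplacian $S^{-1}[\partial/\partial\mathfrak{z}]$ commutes with the $\sigma$–substitution. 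Combining the three pieces shows $H_{4}(\varphi\circ\sigma)=(H_{4}\varphi)\circ\sigma$. Applying this with $\varphi=\epsilon_{4,mA_{1}}$ and using $\epsilon_{4,mA_{1}}\circ\sigma=\epsilon_{4,mA_{1}}$ from Proposition \ref{prop:modularity of the E8 pullbacks for A1 tower} yields $\epsilon_{6,mA_{1}}(\tau,\sigma.\mathfrak{z})=\epsilon_{6,mA_{1}}(\tau,\mathfrak{z})$.

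Finally, for the displayed ``in particular'' claim I would note that the coordinate permutations generating $\mathcal{S}_{m}$ are themselves elements of $\operatorname{O}(mA_{1})$, and the natural map $\operatorname{O}(mA_{1})\to\operatorname{O}(D(mA_{1}))$ is surjective (as recorded in the proof of Proposition \ref{prop:modularity of the E8 pullbacks for A1 tower}); hence invariance under all of $\operatorname{O}(mA_{1})$ immediately gives invariance under the induced action of $\operatorname{O}(D(mA_{1}))\cong\mathcal{S}_{m}$. The only genuine content is the commutation identity $A S^{-1}A\transpose=S^{-1}$; everything else is bookkeeping. Indeed, because $S=2I_{m}$ for the $A_{1}$–tower and $\operatorname{O}(mA_{1})$ consists of signed permutation matrices, the Laplacian is literally $\tfrac{1}{2}\sum_{j}\partial^{2}/\partial z_{j}^{2}$, whose invariance under signed coordinate permutations is transparent; the argument above merely makes this coordinate–free, so I expect no real obstacle here.
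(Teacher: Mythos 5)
Your proof is correct and takes essentially the same route the paper intends: the corollary is stated as an immediate consequence of the remark that the $\epsilon_{6,mA_{1}}=H_{4}(\epsilon_{4,mA_{1}})$ inherit invariance from $\epsilon_{4,mA_{1}}$, and your verification that $H_{4}$ commutes with the substitution $\mathfrak{z}\mapsto\sigma.\mathfrak{z}$ via the identity $A\,S^{-1}A\transpose=S^{-1}$ (trivially so here, since $S=2I_{m}$ and $\operatorname{O}(mA_{1})$ consists of signed permutation matrices) is exactly the computation the paper leaves implicit. Nothing is missing.
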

The next Lemma is about cusp forms and follows immediately from Lemma \ref{lemma:heat operator with automorohic correction}.
\begin{lemma}\label{differential operator and cusp forms}
 Let $\varphi\in J_{k,L;1}$. Then $\varphi$ is a cusp form if and only if $H_{k}(\varphi)$ is a cusp form. 
\end{lemma}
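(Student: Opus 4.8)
The plan is to translate both cusp conditions into the vanishing of Fourier coefficients on the boundary locus $2n-(l,l)=0$ and then compare them directly. Write the Fourier expansion $\varphi=\sum_{n,l}f(n,l)q^{n}r^{l}$ as in Definition \ref{Jacobi form} (recall that $t=1$ here). Since $\varphi$ is holomorphic we already have $f(n,l)=0$ whenever $2n-(l,l)<0$, so by definition $\varphi$ is a cusp form exactly when the remaining boundary coefficients, those with $2n-(l,l)=0$, all vanish. By Lemma \ref{lemma:heat operator with automorohic correction}, $H_{k}(\varphi)\in J_{k+2,L;1}$ is again holomorphic, so the same criterion applies: writing $H_{k}(\varphi)=\sum_{N,l}g(N,l)q^{N}r^{l}$, the form $H_{k}(\varphi)$ is a cusp form if and only if $g(N,l)=0$ for all $(N,l)$ with $2N-(l,l)=0$. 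The whole statement thus reduces to comparing $g(N,l)$ and $f(N,l)$ along this boundary.

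Next I would compute $g(N,l)$ on the boundary using the splitting $H_{k}=H+(4\pi i)^{2}\det(S)(k-\frac{m}{2})\,G_{2}\bullet$. By the second assertion of Lemma \ref{lemma:heat operator with automorohic correction} the operator $H$ multiplies $q^{n}r^{l}$ by $(2\pi i)^{2}\det(S)(2n-(l,l))$, which vanishes identically when $2N-(l,l)=0$; hence $H$ contributes nothing to a boundary coefficient. For the correction term I would insert the expansion (\ref{G2 Fourier expansion}): multiplication by $G_{2}$ turns the coefficient at $q^{N}r^{l}$ into $-\frac{1}{24}f(N,l)+\sum_{m'\geq 1}\sigma_{1}(m')f(N-m',l)$.

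The key point is that on the boundary the shifted terms drop out: if $2N-(l,l)=0$ then $2(N-m')-(l,l)=-2m'<0$ for every $m'\geq 1$, so holomorphy of $\varphi$ forces $f(N-m',l)=0$. Only the constant term of $G_{2}$ survives, and therefore
\[
g(N,l)=-\frac{(4\pi i)^{2}\det(S)\,(k-\frac{m}{2})}{24}\,f(N,l)=\frac{2\pi^{2}}{3}\det(S)\left(k-\tfrac{m}{2}\right)f(N,l)
\]
for all $(N,l)$ with $2N-(l,l)=0$. Since $\det(S)\neq 0$ and $k\neq\frac{m}{2}$, the scalar factor is nonzero, so $g(N,l)=0$ if and only if $f(N,l)=0$ along the boundary. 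Combined with the two criteria from the first paragraph this gives the desired equivalence.

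The only delicate point I anticipate is the degenerate weight $k=\frac{m}{2}$: there the correction coefficient vanishes, $H_{k}$ collapses to $H$, and $H$ annihilates every boundary coefficient, so a nonzero singular form (holomorphic but not a cusp form) would be sent to $0$ and the equivalence would fail. I would therefore record the standing hypothesis $k\neq\frac{m}{2}$, which holds in every application in this paper — for instance $\epsilon_{6,mA_{1}}=H_{4}(\epsilon_{4,mA_{1}})$ has $k=4>\frac{m}{2}$ for $m\in\{1,2,3,4\}$.
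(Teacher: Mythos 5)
Your proof is correct and is exactly the argument the paper leaves implicit: the paper gives no written proof beyond the remark that the lemma ``follows immediately'' from Lemma \ref{lemma:heat operator with automorohic correction}, and your unpacking --- $H$ annihilates every boundary coefficient since it multiplies $q^{n}r^{l}$ by $(2\pi i)^{2}\det(S)(2n-(l,l))$, while holomorphy of $\varphi$ forces all shifted terms $f(N-m',l)$ to vanish on the boundary so that only the constant term $-\tfrac{1}{24}$ of $G_{2}$ survives --- is precisely the intended computation. Your caveat about $k=\tfrac{m}{2}$ is a genuine and needed refinement rather than a defect: at singular weight the scalar $\tfrac{2\pi^{2}}{3}\det(S)\left(k-\tfrac{m}{2}\right)$ vanishes and the equivalence fails (for instance $\Theta_{E_{8}}\in J_{4,E_{8};1}$ is not a cusp form yet $H_{4}(\Theta_{E_{8}})=0$ is), so the lemma as stated should carry the standing hypothesis $k\neq\tfrac{m}{2}$, which indeed holds in every application made in the paper.
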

 Let $L\leq M$ be a primitive sublattice. We extend the pullback notation of Definition \ref{def:pullbak notation} to modular forms in the canonical way. In \cite{G} the arithmetic lifting of Jacobi forms has been defined. We denote this lifting operator by $\operatorname{A-Lift}(\cdot)$ and define 
\[\mathcal{E}_{k}^{mA_{1}}:=\operatorname{A-Lift}(\epsilon_{k,mA_{1}})\in\mathcal{M}_{k}(\widetilde{\Gamma}_{m},1)\,.\]
Moreover the operator $H_{k}$ is extended to the Maa{\ss} space by the following convention:
\[H_{k}(\operatorname{A-Lift}(\varphi)):=\operatorname{A-Lift}(H_{k}(\varphi))\quad,\quad \varphi\in J_{k,L;1}\,.\]
The following Theorem describes all modular forms obtained by the previous considerations.
\begin{theorem}\label{theorem: mod forms A1 tower}
We have the following diagram of modular forms for the $A_{1}$-tower
 \begin{center}
 \begin{tikzpicture}[scale=0.3,node distance=2cm,auto]
   \node (12+A1) at (0cm,-2.5cm) 	{$\textcolor{black}{F_{12}^{A_{1}}}$};
   \node (12+2A1) at (0cm,-6cm) {$\textcolor{black}{F_{12}^{2A_{1}}}$};
   \node (12+3A1) at (0cm,-9.5cm) {$\textcolor{black}{F_{12}^{3A_{1}}}$};
   \node (12+4A1) at (0cm,-13cm) {$F_{12}^{4A_{1}}$};
   \node[] (10+A1) at (4cm,-2.5cm) {$\textcolor{black}{G_{10}^{A_{1}}}$};
   \node[] (10+2A1) at (4cm,-6cm) {$\textcolor{black}{F_{10}^{2A_{1}}}$};
   \node[] (10+3A1) at (4cm,-9.5cm) {$\textcolor{black}{F_{10}^{3A_{1}}}$};
   \node[] (10+4A1) at (4cm,-13cm) {$F_{10}^{4A_{1}}$};
   \node[] (8+2A1) at (8cm,-6cm) {$\textcolor{black}{G_{8}^{2A_{1}}}$};
   \node[] (8+3A1) at (8cm,-9.5cm) {$\textcolor{black}{F_{8}^{3A_{1}}}$};
   \node[] (8+4A1) at (8cm,-13	cm) {$F_{8}^{4A_{1}}$};
   \node[] (6+3A1) at (12cm,-9.5cm) {$\textcolor{black}{G_{6}^{3A_{1}}}$};
  \node[] (6+4A1) at (12cm,-13cm) {$F_{6}^{4A_{1}}$};
  \node[] (4+4A1) at (16cm,-13cm) {$G_{4}^{4A_{1}}$};
  \node[] (e4+4A1) at (-9cm,-13cm) {$\mathcal{E}_{4}^{4A_{1}}$};
  \node[] (e4+3A1) at (-9cm,-9.5cm) {$\mathcal{E}_{4}^{3A_{1}}$};
  \node[] (e4+2A1) at (-9cm,-6cm) {$\mathcal{E}_{4}^{2A_{1}}$};
  \node[] (e4+A1) at (-9cm,-2.5cm) {$\mathcal{E}_{4}^{A_{1}}$};
  \node[] (e6+4A1) at (-4cm,-13cm) {$\mathcal{E}_{6}^{4A_{1}}$};
 \node[] (e6+3A1) at (-4cm,-9.5cm) {$\mathcal{E}_{6}^{3A_{1}}$};
 \node[] (e6+2A1) at (-4cm,-6cm) {$\mathcal{E}_{6}^{2A_{1}}$};
 \node[] (e6+A1) at (-4cm,-2.5cm) {$\mathcal{E}_{6}^{A_{1}}$};
\draw[<-](12+A1) to node {} (12+2A1);
\draw[<-](12+2A1) to node {} (12+3A1);
\draw[<-](12+3A1) to node {} (12+4A1);
\draw[<-](10+A1) to node {} (10+2A1);
\draw[<-](10+2A1) to node {} (10+3A1);
\draw[<-](10+3A1) to node {} (10+4A1);
\draw[<-](8+2A1) to node {} (8+3A1);In the next Theorem we 
\draw[<-](8+3A1) to node {} (8+4A1);
\draw[<-](6+3A1) to node {} (6+4A1);
\draw[->] (e4+4A1) to node {} (e4+3A1);
\draw[->] (e4+3A1) to node {} (e4+2A1);
\draw[->] (e4+2A1) to node {} (e4+A1);
\draw[]  (-2cm,-10.3cm) -- (14cm,-10.3cm);
\draw[]  (-2cm,-10.3cm) -- (-2cm,-1.5cm);
\draw[]  (14cm,-10.3cm) -- (14cm, -1.5cm);
\draw[]  (-2cm,-1.5cm) -- (14cm, -1.5cm);
\draw[->] (e4+A1) to node {\tiny$H_{4}$} (e6+A1);
\draw[->] (e4+2A1) to node {\tiny$H_{4}$} (e6+2A1);
\draw[->] (e4+3A1) to node {\tiny$H_{4}$} (e6+3A1);
\draw[->] (e4+4A1) to node {\tiny$H_{4}$} (e6+4A1);
\end{tikzpicture} 
\end{center}
where $F_{k}^{mA_{1}},G_{k}^{mA_{1}}\in\mathcal{M}_{k}(\Gamma_{m},1)$ for each form appearing in the diagram. The forms inside the rectangle are  cusp forms.
\end{theorem}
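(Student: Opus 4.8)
The plan is to reduce the whole statement to two inputs: the behaviour of the arithmetic lift on the stable group $\widetilde{\Gamma}_m$, and the $\operatorname{O}(mA_1)$-invariance of the underlying Jacobi forms. The enabling observation is group-theoretic. Extending an automorphism of $mA_1$ by the identity on $U\perp U_1$ embeds $\operatorname{O}(mA_1)$ into $\Gamma_m=\operatorname{O}(L_2(mA_1))^{+}$; since such an element fixes a positive-definite $2$-plane it preserves the component $\mathcal{D}$, so the image indeed lies in $\operatorname{O}(L_2)^{+}$. By the surjectivity of $\operatorname{O}(mA_1)\to\operatorname{O}(D(mA_1))$ recorded in the proof of Proposition \ref{prop:modularity of the E8 pullbacks for A1 tower} together with $\ker\pi=\widetilde{\Gamma}_m$, every coset of $\widetilde{\Gamma}_m$ in $\Gamma_m$ meets $\operatorname{O}(mA_1)$, i.e. $\Gamma_m=\widetilde{\Gamma}_m\cdot\operatorname{O}(mA_1)$. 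Writing $g=\tilde{g}\sigma$ with $\tilde{g}\in\widetilde{\Gamma}_m$ and $\sigma\in\operatorname{O}(mA_1)$, a weight-$k$ form $F$ that is modular for $\widetilde{\Gamma}_m$ with trivial character and satisfies $F(\sigma\mathcal{Z})=F(\mathcal{Z})$ obeys $F(g\mathcal{Z})=F(\sigma\mathcal{Z})=F(\mathcal{Z})$, hence lies in $\mathcal{M}_k(\Gamma_m,1)$.

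I would then verify the two inputs for each family. All Jacobi forms in sight lie in $J_{k,mA_1;1}$ with trivial character: for the theta type this is asserted in Proposition \ref{prop:tower for 4A1} (the basic block $\psi_{4,4A_1}=\vartheta_{4A_1}^{2}$ is an even theta power, so its binary character squares away), while the Eisenstein forms $\epsilon_{4,mA_1}$ and $\epsilon_{6,mA_1}=H_4(\epsilon_{4,mA_1})$ are index-$1$ forms with trivial character by construction. Gritsenko's arithmetic lift \cite{G} therefore places $F_k^{mA_1},G_k^{mA_1},\mathcal{E}_4^{mA_1},\mathcal{E}_6^{mA_1}$ in $\mathcal{M}_k(\widetilde{\Gamma}_m,1)$. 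For the invariance I would use that the lift intertwines the coordinate action $\mathfrak{z}\mapsto\sigma\mathfrak{z}$ of $\operatorname{O}(mA_1)$ on Jacobi forms with the geometric action of $\sigma$ on $\mathcal{D}^{\bullet}$, so that $\varphi(\tau,\sigma\mathfrak{z})=\varphi(\tau,\mathfrak{z})$ forces $\operatorname{A-Lift}(\varphi)(\sigma\mathcal{Z})=\operatorname{A-Lift}(\varphi)(\mathcal{Z})$. The required invariance of the Jacobi forms is exactly Proposition \ref{prop:modularity of the E8 pullbacks for A1 tower} and Corollary \ref{cor:differential operator preserves invariance for NA1} for the Eisenstein type, and for the theta type it follows from the symmetric shape $\vartheta_{mA_1}^{2}=\prod_{j}\vartheta(\tau,z_j)^{2}$, which is invariant under every permutation and every sign change of the coordinates, a symmetry preserved by the even theta powers, the $\eta$-multiplications, and the operators $\tfrac{\partial^{2}}{\partial z_j^{2}}\big|_{z_j=0}$ of Proposition \ref{prop:tower for 4A1} acting in the surviving variables. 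With both inputs in place, the reduction of the first paragraph gives modularity for $\Gamma_m$ with trivial character.

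It remains to read off the cusp condition and the arrows. The forms inside the rectangle are the lifts of the theta type Jacobi forms in rows $m=1,2,3$ of Proposition \ref{prop:tower for 4A1}, which are Jacobi cusp forms; since the arithmetic lift carries Jacobi cusp forms to cusp forms \cite{G}, these are cusp forms, whereas the bottom row $m=4$ comes from the holomorphic but non-cuspidal $\psi_{4,4A_1}$ and its companions and need not vanish at the cusps. The vertical arrows in the theta columns are the lifts of the pullback relations of Proposition \ref{prop:tower for 4A1}, and those in the $\mathcal{E}_4$-column are the lifts of the restrictions $\epsilon_{4,(m+1)A_1}\downharpoonright_{mA_1}=\epsilon_{4,mA_1}$ coming from the fixed chain $A_1\hookrightarrow\cdots\hookrightarrow E_8$; both use that the lift is compatible with the pullback of Definition \ref{def:pullbak notation} extended to modular forms. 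The horizontal $H_4$-arrows hold by the definition $\mathcal{E}_6^{mA_1}=H_4(\mathcal{E}_4^{mA_1})$ together with the convention $H_k\circ\operatorname{A-Lift}=\operatorname{A-Lift}\circ H_k$.

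I expect the main obstacle to be the second paragraph. The Eisenstein invariance is handed over by the cited Proposition and Corollary, so the real work is on the theta side: one must confirm that the Jacobi forms of \cite{Wo1} are symmetric in the coordinates \emph{without} a stray sign — a sign under a transposition would replace the trivial character by $v_{\pi}$ — and one must pin down the equivariance of the arithmetic lift under $\operatorname{O}(mA_1)$, from which the compatibility with pullbacks used in the third paragraph also follows. Once these two facts are isolated, the remainder is bookkeeping assembled from the earlier results.
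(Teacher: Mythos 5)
Your overall strategy is the same as the paper's: lift the Jacobi forms to get $\widetilde{\Gamma}_{m}$-modular forms, then upgrade to $\Gamma_{m}$ by exhibiting invariance under $\operatorname{O}(mA_{1})$, using that $\operatorname{O}(mA_{1})\to\operatorname{O}(D(mA_{1}))$ is onto so that $\Gamma_{m}=\widetilde{\Gamma}_{m}\cdot\operatorname{O}(mA_{1})$. The Eisenstein column, the cusp-form bookkeeping, and the arrows are all handled as in the paper. But there is a genuine gap in your second paragraph, and you flagged the right spot yourself: you claim the theta type Jacobi forms of Proposition \ref{prop:tower for 4A1} are symmetric in the coordinates because they descend from the symmetric block $\prod_{j}\vartheta(\tau,z_{j})^{2}$. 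That is false for the off-diagonal entries. Only the diagonal forms $\psi_{12-2m,mA_{1}}$ are symmetric; a form such as $\varphi_{10,4A_{1}}$, which restricts to $\psi_{10,A_{1}}$ along $z_{2}=z_{3}=z_{4}=0$, is built (in \cite{Wo1}) by treating some coordinates with theta factors and others with the weight-$0$, index-$1$ weak Jacobi form, so it singles out certain variables and is neither symmetric nor antisymmetric under $\mathcal{S}_{m}$. The operators $\partial^{2}/\partial z_{j}^{2}\big|_{z_{j}=0}$ only relate the diagonal entries to one another; they do not produce the off-diagonal $\varphi_{k,mA_{1}}$ from symmetric data, so your ``symmetry is preserved'' argument does not reach them. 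Without invariance, the lift of such a $\varphi_{k,mA_{1}}$ lies in $\mathcal{M}_{k}(\widetilde{\Gamma}_{m},1)$ but not in $\mathcal{M}_{k}(\Gamma_{m},1)$, and the theorem as stated fails for those entries.

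The missing idea is the symmetrization step the paper inserts: replace each theta type Jacobi form by its average $\frac{1}{m!}\sum_{\sigma\in\mathcal{S}_{m}}\sigma.\varphi$ over the coordinate permutations (which preserves $J_{k,mA_{1};1}$), and define $F_{k}^{mA_{1}}$, $G_{k}^{mA_{1}}$ as the lifts of these symmetrized forms. After that, your reduction $\Gamma_{m}=\widetilde{\Gamma}_{m}\cdot\operatorname{O}(mA_{1})$ goes through verbatim. One should also note (as the paper does implicitly) that symmetrization does not destroy the pullback relations or the cuspidality, and that the cusp-form transfer under the arithmetic lift is cited in the paper only for maximal even lattices, which covers exactly $m=1,2,3$ --- consistent with the rectangle excluding the $m=4$ row.
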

\begin{proof}
 We define the $F_{k}^{mA_{1}},G_{k}^{mA_{1}}$ as the arithmetic liftings of the functions in Proposition \ref{prop:tower for 4A1} with the same arrangement for the weights and the lattices. This yields functions which belong to $\mathcal{M}_{k}(\widetilde{\Gamma}_{m},1)$. The arithmetic lifting maps cusp forms to cusp forms if the lattice $mA_{1}$ is a maximal even lattice. This is precisely the case for $m=1,2,3$. Hence the statements on cusp forms follow from Proposition \ref{prop:tower for 4A1} and the preceeding construction of $\epsilon_{k,mA_{1}}$. Note that the operator $\operatorname{A-Lift}$ commutes with pullbacks as one immediately extracts from its definition. 
 The Jacobi forms appearing in Proposition \ref{prop:modularity of the E8 pullbacks for A1 tower} and Corollary \ref{cor:differential operator preserves invariance for NA1} are invariant with respect to the permutations of $z_{1},\dots,z_{m}$. However, the theta type forms listed in Proposition \ref{prop:tower for 4A1} are not except for the functions $\psi_{12-2m,mA_{1}}$ on the diagonal. Hence we can apply the operator
\begin{equation}\label{symmetrization operator}
J_{k,mA_{1};1}\to J_{k,mA_{1};1}\quad,\quad\varphi\mapsto\frac{1}{m!}\sum_{\sigma\in \mathcal{S}_{m}}{\sigma.\varphi}
\end{equation}
where $(\sigma.\varphi)(\tau,\mathfrak{z}):=\varphi(\tau,\sigma.\mathfrak{z})$  for any $\sigma\in\mathcal{S}_{m}$. After application of (\ref{symmetrization operator}) all the functions appearing in Proposition \ref{prop:tower for 4A1} are symmetric. Hence the maximal modular group of these liftings is  $\Gamma_{m}$. 
\end{proof}

\section{Rings of Modular Forms}
For any $r\in L_{2}\otimes \Q$ satisfying $(r,r)<0$ we define the rational quadratic divisor as
\[\mathcal{D}_{r}=\{[\mathcal{Z}]\in\mathcal{D}\,|\,(\mathcal{Z},r)=0\}\,.\]
For any $m\in\N$ we fix the notation
\[\mathcal{D}^{m}:=\mathcal{D}(L_{2}(mA_{1}))\,.\]
In particular consider $\varepsilon_{m}\in mA_{1}(-1)\subseteq L_{2}(mA_{1})$. Then one has
\begin{equation}\label{rational quadratc divisors for A1 tower}
 \mathcal{D}_{\varepsilon_{m}}^{m}\cong \mathcal{D}^{m-1}\,.
\end{equation}
 In \cite[Theorem 5.1]{G1} it was proved that this divisor is attached to $G_{12-2m}^{mA_{1}}$ for $m=1,\dots,4$.
\begin{theorem}\label{Lifting construction of reflective modular forms for A1 tower}
 Let $m\in\{1,2,3,4\}$. The divisor of the modular form $G_{12-2m}^{mA_{1}}$ consists of the $\Gamma_{m}$-orbit of
\(\mathcal{D}_{\varepsilon_{m}}^{m}.\)
The vanishing order is two on each irreducible component of $\operatorname{div}(G_{12-2m}^{mA_{1}})$. Moreover there exists a modular form 
\[\chi_{6-m}\in\mathcal{M}_{6-m}(\Gamma_{m},\det v_{2})\] 
whose square equals $G_{12-2m}^{mA_{1}}$. If $m\neq 4$, then $\chi_{6-m}$ is a cusp form.
\end{theorem}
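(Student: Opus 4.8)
The plan is to construct $\chi_{6-m}$ directly as an arithmetic lift of a ``half'' theta‑type Jacobi form and then to identify its square with $G_{12-2m}^{mA_{1}}$ by comparing divisors. First I would record that the diagonal form of Proposition~\ref{prop:tower for 4A1} is a perfect square: from the explicit shape of the theta‑type forms one has
\[\psi_{12-2m,mA_{1}}=\alpha_{m}\,\eta(\tau)^{6(4-m)}\,\vartheta_{mA_{1}}(\tau,\mathfrak{z})^{2}=\alpha_{m}\,\theta_{6-m}^{2}\]
for some $\alpha_{m}\in\C^{\times}$, where
\[\theta_{6-m}:=\eta(\tau)^{3(4-m)}\,\vartheta_{mA_{1}}(\tau,\mathfrak{z})\in J_{6-m,mA_{1};1/2}(\chi)\]
is a Jacobi form of weight $6-m$ and half‑integral index $1/2$ carrying the product of the eta‑multiplier $v_{\eta}^{3(4-m)}$ and the binary theta character $\chi_{2}$. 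Applying the variant of the arithmetic lifting for Jacobi forms of half‑integral index from \cite{CG} then produces the candidate
\[\chi_{6-m}:=\operatorname{A-Lift}(\theta_{6-m})\in\mathcal{M}_{6-m}(\Gamma_{m},\chi)\,.\]

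Next I would pin down the divisor. By \cite[Theorem 5.1]{G1} the divisor of $G_{12-2m}^{mA_{1}}$ is the $\Gamma_{m}$‑orbit of $\mathcal{D}_{\varepsilon_{m}}^{m}$, and since on the diagonal $\psi_{12-2m,mA_{1}}=\alpha_{m}\theta_{6-m}^{2}$ vanishes to order two there, the order is exactly two on each irreducible component; thus $\operatorname{div}(G_{12-2m}^{mA_{1}})=2D$ with $D$ the reduced orbit. On the other hand $\vartheta(\tau,z)$ has a simple zero along $z=0$, so the factor $\vartheta_{mA_{1}}$ vanishes along $z_{m}=0$, which under the coordinates (\ref{choice of coordinates for mA1}) is $\mathcal{D}_{\varepsilon_{m}}^{m}$; hence $\chi_{6-m}$ vanishes along this divisor and, by modularity, along its whole orbit, so $\operatorname{div}(\chi_{6-m})\geq D$ and $\operatorname{div}(\chi_{6-m}^{2})\geq\operatorname{div}(G_{12-2m}^{mA_{1}})$. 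The quotient $\chi_{6-m}^{2}/G_{12-2m}^{mA_{1}}$ is therefore a holomorphic modular function of weight $0$ and trivial character on the compact Baily--Borel model, hence a constant; it is nonzero because $\chi_{6-m}\not\equiv 0$, and comparing one Fourier coefficient normalises it to $1$. This yields $\chi_{6-m}^{2}=G_{12-2m}^{mA_{1}}$ and in particular $\operatorname{div}(\chi_{6-m})=D$, so the vanishing order is two on every component. (The bare existence of a holomorphic square root can also be seen a posteriori from $\operatorname{div}(G_{12-2m}^{mA_{1}})=2D$ and the contractibility of $\mathcal{D}$.)

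The main obstacle is to show that $\chi$ is precisely $\det v_{2}$. Squaring forces $\chi^{2}=1$, so $\chi$ is a binary character and lies in $\Gamma_{m}/\Gamma_{m}'\cong\langle\det,v_{2},v_{\pi}\rangle$ (resp.\ $\langle\det,v_{2}\rangle$ for $m=1$); it remains to compute its three components. The $v_{\pi}$‑component is trivial because $\vartheta_{mA_{1}}=\prod_{j}\vartheta(\tau,z_{j})$ is invariant under the coordinate permutations realising $\mathcal{S}_{m}\cong\operatorname{O}(D(mA_{1}))$. The $v_{2}$‑component is produced by the half‑integral index of $\theta_{6-m}$ through the lifting of \cite[Theorem 2.2]{CG}. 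For the $\det$‑component, evaluating at $-I$ via $\chi_{6-m}(-\mathcal{Z})=(-1)^{6-m}\chi_{6-m}(\mathcal{Z})$ together with $\det(-I)=(-1)^{m}=(-1)^{6-m}$ settles the odd cases $m=1,3$ at once; the even cases $m=2,4$, where $-I$ is inconclusive, require genuinely tracking the eta‑multiplier $v_{\eta}^{3(4-m)}$ and the theta character $\chi_{2}$ through the lift. Carrying out this bookkeeping to confirm that the contributions assemble to exactly $\det v_{2}$ is the delicate step.

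Finally, for the cusp statement I would invoke that $mA_{1}$ is a maximal even lattice precisely for $m=1,2,3$, so that in these cases the arithmetic lift of a Jacobi cusp form is again a cusp form, as already used in the proof of Theorem~\ref{theorem: mod forms A1 tower}. For $m\leq 3$ the form $\theta_{6-m}=\eta^{3(4-m)}\vartheta_{mA_{1}}$ is a Jacobi cusp form (equivalently $\psi_{12-2m,mA_{1}}$ is cuspidal, as recorded in Proposition~\ref{prop:tower for 4A1}), whence $\chi_{6-m}$ is a cusp form. For $m=4$ the square root is $\vartheta_{4A_{1}}$ itself, which is not cuspidal, in agreement with $\chi_{2}^{4A_{1}}$ lying on the non‑cuspidal bottom line of the tower.
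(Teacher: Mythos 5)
Your overall strategy is sound and in fact mirrors the route the paper itself points to: the paper gives no proof of this theorem at all, but simply imports it from \cite[Theorem 5.1]{G1}, and the construction there (and in \cite{CG}) is exactly what you describe --- write the diagonal theta-type form as $\alpha_m\,\eta^{6(4-m)}\vartheta_{mA_1}^2$, lift the half-integral-index square root $\eta^{3(4-m)}\vartheta_{mA_1}\in J_{6-m,mA_1;1/2}$, and compare divisors. You are also right to be careful that $\operatorname{A-Lift}(\theta)^2\neq\operatorname{A-Lift}(\theta^2)$, so that the identification of the square must go through the divisor/Koecher argument rather than through multiplicativity of the lift; that part of your write-up is correct, and the factorization $\psi_{12-2m,mA_1}=\alpha_m\eta^{6(4-m)}\vartheta_{mA_1}^2$ does follow from the tower of Proposition \ref{prop:tower for 4A1} via $\partial_z\vartheta(\tau,0)=-2\pi\eta(\tau)^3$, though you assert it rather than derive it.

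The genuine gap is the one you flag yourself and then do not close: the identification of the character as exactly $\det v_2$. Squaring only tells you the character is binary, i.e.\ some $\det^{a}v_2^{b}v_\pi^{c}$; your $-I$ argument is vacuous for $m=2,4$, and even for $m=1,3$ it only pins down the product of the components that are nontrivial on $-I$. The substantive content is the values on the generators that see $v_2$, e.g.\ showing $\chi_{6-m}(\sigma_{e_1-f_1}\mathcal{Z})=-\chi_{6-m}(\mathcal{Z})$ and that the eta- and theta-multipliers through the half-integral-index lift of \cite[Theorem 2.2]{CG} contribute precisely $v_2$ and nothing in the $v_\pi$-direction. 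This is not a cosmetic omission: the precise character is the part of the statement that Lemma \ref{lemma:divisors of A1 modular forms with characer} and the proof of Theorem \ref{theorem:sructure graded rings with respect to character A1} actually use (to divide a form with character $\det v_2^{a}v_\pi^{b}$ by $\chi_{6-m}$ and land back in a known space). As written, your argument establishes the divisor statement, the existence of a holomorphic square root with \emph{some} binary character, and the cuspidality for $m\leq 3$, but ends exactly where the theorem's sharpest claim begins; either carry out the multiplier bookkeeping or cite \cite[Theorem 2.2]{CG} and \cite[Theorem 5.1]{G1} for the character, as the paper does.
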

This yields the structure of the graded ring for the $A_{1}$-tower with trivial character.
\begin{theorem}\label{theorem:graded ring with trivial character}
 Let $m\in\{1,2,3,4\}$. The graded ring $\mathcal{A}(\Gamma_{m})$ is a polynomial ring in the $m+3$ functions which are given by the $m$-th row of the diagram in Theorem \ref{theorem: mod forms A1 tower}. 
\end{theorem}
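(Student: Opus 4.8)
The plan is to induct on $m$, using restriction of modular forms to the rational quadratic divisor $\mathcal{D}_{\varepsilon_m}^m$ together with the isomorphism $\mathcal{D}_{\varepsilon_m}^m\cong\mathcal{D}^{m-1}$ from (\ref{rational quadratc divisors for A1 tower}). The base case $m=1$ is Igusa's result (\ref{Igusas result}): under the identifications $\mathcal{E}_4^{A_1}=E_4$, $\mathcal{E}_6^{A_1}=E_6$, $F_{12}^{A_1}=\psi_{12}$ and $G_{10}^{A_1}=\chi_5^2$ (the last from Theorem \ref{Lifting construction of reflective modular forms for A1 tower} at $m=1$), the ring $\mathcal{A}(\Gamma_1)$ is the polynomial ring on the four functions of the first row. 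Throughout I note that $\dim\mathcal{D}=m+2$, so $\mathcal{A}(\Gamma_m)$ has transcendence degree $m+3$, matching the number of proposed generators.

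First I would set up the restriction homomorphism. Since $\varepsilon_m\in mA_1(-1)$ is a $(-2)$-vector and $L_2(mA_1)$ splits orthogonally as $L_2((m-1)A_1)\perp\langle\varepsilon_m\rangle$, every element of $\Gamma_{m-1}$ extends to $\Gamma_m$ by fixing $\varepsilon_m$; hence the stabilizer of $\varepsilon_m$ in $\Gamma_m$ surjects onto $\Gamma_{m-1}$ acting on $\mathcal{D}_{\varepsilon_m}^m\cong\mathcal{D}^{m-1}$. Restriction therefore yields a well-defined graded ring homomorphism
\[\rho\colon\mathcal{A}(\Gamma_m)\longrightarrow\mathcal{A}(\Gamma_{m-1}),\]
holomorphy at the cusps of the target being guaranteed by the Koecher principle. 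By the pullback compatibility encoded in the diagram of Theorem \ref{theorem: mod forms A1 tower}—restriction to $\mathcal{D}_{\varepsilon_m}^m$ is exactly the Jacobi-form pullback to $(m-1)A_1$, i.e. setting $z_m=0$—the $m+2$ generators of the $m$-th row other than $G_{12-2m}^{mA_1}$ map, up to nonzero scalars, bijectively onto the $m+2$ generators of the $(m-1)$-th row, whereas $G_{12-2m}^{mA_1}$ maps to $0$. With the inductive hypothesis this shows $\rho$ is surjective with image the polynomial ring on the $(m-1)$-th row.

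The key step is to prove $\ker\rho=(G_{12-2m}^{mA_1})$. The inclusion $\supseteq$ is immediate. For $\subseteq$, let $F\in\mathcal{A}(\Gamma_m)$ restrict to $0$; then $F$ vanishes on $\mathcal{D}_{\varepsilon_m}^m$, hence by $\Gamma_m$-invariance on its entire orbit. The reflection $\sigma_{\varepsilon_m}\in\Gamma_m$ fixes $\mathcal{D}_{\varepsilon_m}^m$ and acts by $-1$ on the normal direction; since $F$ has trivial character it is even in the normal coordinate, so it vanishes to even order, i.e. to order $\geq 2$, along each component of the orbit. By Theorem \ref{Lifting construction of reflective modular forms for A1 tower} the divisor of $G_{12-2m}^{mA_1}=\chi_{6-m}^2$ is precisely this orbit with multiplicity two. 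Comparing orders, $F/G_{12-2m}^{mA_1}$ extends holomorphically across the divisor, is holomorphic elsewhere and at the cusps, and carries trivial character because $G_{12-2m}^{mA_1}$ does; thus $F\in(G_{12-2m}^{mA_1})$.

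Finally I would invoke an elementary graded-algebra fact: if $R$ is a graded domain, $g\in R$ a homogeneous nonzerodivisor of positive degree, and $R/(g)$ is a polynomial ring on the images of homogeneous $x_1,\dots,x_r\in R$, then $R=\C[x_1,\dots,x_r,g]$ is again a polynomial ring; indeed $R=\C[x_1,\dots,x_r]+gR$ yields $R=\C[x_1,\dots,x_r,g]$ by graded Nakayama, and algebraic independence follows by reducing a putative relation modulo $g$ and then cancelling the nonzerodivisor $g$. Applying this with $R=\mathcal{A}(\Gamma_m)$, $g=G_{12-2m}^{mA_1}$ (a nonzerodivisor since $\mathcal{D}$ is irreducible) and the $x_i$ the remaining $m+2$ generators of the $m$-th row closes the induction. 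The hardest part is the kernel computation $\ker\rho=(G_{12-2m}^{mA_1})$, and inside it the parity argument showing that a trivial-character form vanishing on the divisor vanishes to order at least two—so that it is divisible by $G_{12-2m}^{mA_1}$ and not merely by $\chi_{6-m}$, which carries the nontrivial character $\det v_2$. This is exactly where the multiplicity-two statement and the factorisation $\chi_{6-m}^2=G_{12-2m}^{mA_1}$ of Theorem \ref{Lifting construction of reflective modular forms for A1 tower} are indispensable.
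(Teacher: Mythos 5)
Your proposal is correct and follows essentially the same route as the paper: base case Igusa for $m=1$, surjectivity of the restriction map from the generator diagram of Theorem \ref{theorem: mod forms A1 tower}, and the key divisibility step where the reflection $\sigma_{\varepsilon_{m}}$ (the paper's matrix $M=\operatorname{diag}(1,1,K,1,1)$) forces even vanishing order along $\mathcal{D}_{\varepsilon_{m}}^{m}$ so that one may divide by $G_{12-2m}^{mA_{1}}$ via Theorem \ref{Lifting construction of reflective modular forms for A1 tower} and Koecher's principle. The only difference is cosmetic: you spell out the concluding graded-algebra step (kernel equals the principal ideal, then graded Nakayama and algebraic independence) where the paper simply invokes induction on the weight.
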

\begin{proof}
 Since $-I_{m+4}\in\Gamma_{m}$ for all $m$ there are no modular forms of odd weight in $\mathcal{A}(\Gamma_{m})$. For the proof we consider the following reduction process:
\begin{enumerate}[(i)]
 \item The starting point is the case $m=1$. Our construction yields the classical result of Igusa
\begin{equation}\label{Igusas result for A1}
 \mathcal{A}(\Gamma_{1})=\C[\mathcal{E}_{4}^{A_{1}},\mathcal{E}_{6}^{A_{1}},G_{10}^{A_{1}},F_{12}^{A_{1}}]
\end{equation}
as we have already investigated in (\ref{Igusas result}).
 \item We have an embedding $\Gamma_{m}\hookrightarrow \Gamma_{m+1}$ for each $m\in\N$. Hence the restriction map  
\[\mathcal{M}_{k}(\Gamma_{m+1},1)\to\mathcal{M}_{k}(\Gamma_{m},1)\,,\,F\mapsto F|_{\mathcal{D}^{m}}\]
 is well-defined for any even $k\in\N_{0}$. This map extends to a homomorphism of the graded algebras
\[\operatorname{Res}^{m+1}_{m}\,:\,\mathcal{A}(\Gamma_{m+1})\to \mathcal{A}(\Gamma_{m})\,.\]
We shall show that this map is surjective for $m=1,2,3$. 
\item Let $k\in\N_{0}$ and consider $F\in\mathcal{M}_{k}(\Gamma_{m},1)$ with the property \[F|_{\mathcal{D}^{m-1}}\equiv 0\,.\] Let $m\geq 2$ and define
\[M:=\operatorname{diag}(1,1,K,1,1)\,\text{ where }\,K:=\operatorname{diag}(1,\dots,-1)\in\operatorname{GL}(m,\Z)\,.\]
Since $M$ belongs to  $\Gamma_{m}$ the Taylor expansion of $F$ around $0$ with respect to $z_{m}$ shows that $F$ vanishes of order at least two on $\mathcal{D}^{m-1}$. According to Theorem \ref{Lifting construction of reflective modular forms for A1 tower}
we can divide $F$ by $G_{12-2m}^{mA_{1}}$ and obtain a holomorphic modular form in $\mathcal{M}_{k-12+2m}(\Gamma_{m},1)$
by Koecher's principle for automorphic forms, compare \cite[p. 209]{Ba}. Note that we have used the identification (\ref{rational quadratc divisors for A1 tower}), here.
\end{enumerate}
Now starting from (\ref{Igusas result for A1}) the statement follows by induction on the weight using (i)-(iii) where the surjectivity of $\operatorname{Res}_{m}^{m+1}$ for $m=1,2,3$ is extracted from Theorem \ref{theorem: mod forms A1 tower}.
\end{proof}
In the following we construct three modular form with respect to the character $v_{\pi}$. Following  \cite{Wo1} we consider the following three theta type Jacobi forms with respect to the coordinates introduced in (\ref{choice of coordinates for mA1})
\[\begin{aligned}
   &\vartheta_{4A_{1}}^{(1)}(\tau,\mathfrak{z}_{4A_{1}})=\vartheta(\tau,z_{1}-z_{2})\vartheta(\tau,z_{1}+z_{2})\vartheta(\tau,z_{3}-z_{4})\vartheta(\tau,z_{3}+z_{4})\,,&\\
   &\vartheta_{4A_{1}}^{(2)}(\tau,\mathfrak{z}_{4A_{1}})=\vartheta(\tau,z_{3}-z_{2})\vartheta(\tau,z_{3}+z_{2})\vartheta(\tau,z_{1}-z_{4})\vartheta(\tau,z_{1}+z_{4})\,,&\\
   &\vartheta_{4A_{1}}^{(3)}(\tau,\mathfrak{z}_{4A_{1}})=\vartheta(\tau,z_{1}-z_{3})\vartheta(\tau,z_{1}+z_{3})\vartheta(\tau,z_{2}-z_{4})\vartheta(\tau,z_{2}+z_{4})&
  \end{aligned}\] 
such that $\vartheta_{4A_{1}}^{(j)}\in J_{2,4A_{1};1}(v_{\eta}^{12})$ for $j=1,2,3$. By multiplying each of the three functions by $\eta(\tau)^{12}$ and considering the arithmetic lifting of these functions we obtain three modular forms $\Delta_{8,4A_{1}}^{(j)},j=1,2,3$ which belong to   $\mathcal{M}_{8}(\widetilde{\Gamma}_{4},1)$. 
\begin{prop}\label{cusp form of weight 24 for 4A1}
 There is a cusp form \[\Delta_{24}^{4A_{1}}\in\mathcal{S}_{24}(\Gamma_{4},v_{\pi})\] satisfying 
\[\Delta_{24}^{4A_{1}}=\Delta_{8,4A_{1}}^{(1)}\,\Delta_{8,4A_{1}}^{(2)}\,\Delta_{8,4A_{1}}^{(3)}\,.\]
The divisor equals the $\Gamma_{4}$-orbit of $\mathcal{D}_{\varepsilon_{1}+\varepsilon_{4}}^{4}$.
\end{prop}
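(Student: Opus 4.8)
The plan is to verify the three assertions separately: the weight-$24$ modularity with character $v_\pi$, the product identity (which is the definition of $\Delta_{24}^{4A_1}$), and the divisor. I would treat the first as essentially a bookkeeping of theta-signs and spend the real effort on the divisor. First I record that, since each factor lies in $\mathcal{M}_8(\widetilde{\Gamma}_4,1)$, the product is a holomorphic modular form of weight $24$ for the stable group $\widetilde{\Gamma}_4$. To upgrade modularity to all of $\Gamma_4$ I use the surjection $\pi\colon\Gamma_4\twoheadrightarrow\operatorname{O}(D(4A_1))\cong\mathcal{S}_4$ and pick coset representatives inside $\operatorname{O}(4A_1)$, which act on $\mathfrak{z}=(z_1,\dots,z_4)$ by signed permutations. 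Using only $\vartheta(\tau,-z)=-\vartheta(\tau,z)$ one checks that a sign change $z_i\mapsto -z_i$ fixes each $\vartheta_{4A_1}^{(j)}$ (every variable occurs once in a ``$+$'' slot and once in a ``$-$'' slot), which is consistent with the $\widetilde{\Gamma}_4$-invariance already in hand; whereas a transposition of two coordinates permutes $\{\vartheta_{4A_1}^{(1)},\vartheta_{4A_1}^{(2)},\vartheta_{4A_1}^{(3)}\}$ up to signs, and a direct case check shows the product is multiplied by $-1$ in every instance. Since transpositions generate $\mathcal{S}_4$ and $\operatorname{sgn}$ is determined by its values on them, this is exactly $\Delta_{24}^{4A_1}(g\mathcal{Z})=v_\pi(g)\Delta_{24}^{4A_1}(\mathcal{Z})$, so $\Delta_{24}^{4A_1}\in\mathcal{M}_{24}(\Gamma_4,v_\pi)$.

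For the divisor I would first establish the inclusion $\operatorname{div}(\Delta_{24}^{4A_1})\supseteq\Gamma_4\cdot\mathcal{D}_{\varepsilon_1+\varepsilon_4}^4$. The function $\vartheta_{4A_1}^{(2)}$ contains the factor $\vartheta(\tau,z_1+z_4)$, and the primitive pullback to $(\varepsilon_1+\varepsilon_4)_{4A_1}^{\perp}$ imposes $z_1+z_4=0$, where that theta factor vanishes; hence $(\eta^{12}\vartheta_{4A_1}^{(2)})\downharpoonright_{(\varepsilon_1+\varepsilon_4)^{\perp}}=0$. Since the arithmetic lift commutes with pullbacks (as used in the proof of Theorem \ref{theorem: mod forms A1 tower}), the restriction of $\Delta_{8,4A_1}^{(2)}$ to $\mathcal{D}_{\varepsilon_1+\varepsilon_4}^4$ vanishes, and therefore so does $\Delta_{24}^{4A_1}$. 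Running over the twelve theta factors $\vartheta(\tau,z_i\pm z_j)$ in the three functions yields the twelve divisors $\mathcal{D}_{\varepsilon_i\pm\varepsilon_j}^4$, and because $\operatorname{O}(4A_1)\leq\Gamma_4$ acts transitively on the twelve vectors $\varepsilon_i\pm\varepsilon_j$ $(1\le i<j\le 4)$, all of these lie in the single orbit $\Gamma_4\cdot\mathcal{D}_{\varepsilon_1+\varepsilon_4}^4$, each arising from one factor, hence with multiplicity one.

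The reverse inclusion is the crux. I would prove it by a degree comparison: exhibit a holomorphic form whose divisor is exactly $\Gamma_4\cdot\mathcal{D}_{\varepsilon_1+\varepsilon_4}^4$ with multiplicity one (for instance the appropriate Borcherds product attached to that orbit, or the cusp form coming from the maximal even overlattice $D_4\supseteq 4A_1$), divide $\Delta_{24}^{4A_1}$ by it, and use Koecher's principle to conclude that the quotient is a nowhere-vanishing holomorphic modular form, hence a nonzero constant by the weight. Cuspidality I would read off from the multiplicativity of Siegel's $\Phi$-operator, $\Phi_c(\Delta_{24}^{4A_1})=\prod_{j}\Phi_c(\Delta_{8,4A_1}^{(j)})$ at each cusp $c$: at the standard cusp every factor is the arithmetic lift of a Jacobi cusp form and so has vanishing zeroth Fourier--Jacobi coefficient, while at the remaining cusps one uses that the three factors are permuted by $\Gamma_4/\widetilde{\Gamma}_4$, so that at least one factor vanishes on each boundary component. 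Alternatively, cuspidality follows by realising $\Delta_{24}^{4A_1}$ through $D_4$: the defect that $4A_1$ is non-maximal disappears over the maximal even lattice $D_4$, where lifts of cusp forms remain cuspidal, after which one restricts to the subgroup $\Gamma_4$.

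I expect the main obstacle to be precisely the reverse divisor inclusion together with cuspidality. Because $4A_1$ is not maximal, the arithmetic lift has in general neither a divisor supported on rational quadratic divisors nor the cusp-form-preserving property, so ruling out spurious components and any extra vanishing order, and controlling $\Phi$ over all (possibly several) cusp-orbits, is where the genuine work lies; by contrast the weight, the character, and the lower bound for the divisor are direct consequences of the parity of $\vartheta$ and of the compatibility of $\operatorname{A-Lift}(\cdot)$ with pullbacks.
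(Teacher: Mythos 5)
Your computation of the character (via the parity of $\vartheta$ and the fact that a transposition of coordinates permutes the three $\vartheta^{(j)}_{4A_1}$ while introducing an overall sign $-1$) and your lower bound for the divisor (each of the twelve factors $\vartheta(\tau,z_i\pm z_j)$ forces vanishing on $\mathcal{D}^4_{\varepsilon_i\pm\varepsilon_j}$, all lying in a single $\Gamma_4$-orbit) are correct and in fact more explicit than anything the paper writes down. But the two points you yourself flag as the crux are not actually closed. For the reverse divisor inclusion you propose to ``exhibit a holomorphic form whose divisor is exactly $\Gamma_4\cdot\mathcal{D}^4_{\varepsilon_1+\varepsilon_4}$ with multiplicity one'' and name as candidate the weight-$24$ cusp form attached to the maximal overlattice $D_4$. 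That form, from \cite[Theorem 4.4]{Wo1}, is precisely what the paper's entire proof consists of: it asserts that $\Delta^{4A_1}_{24}$ \emph{coincides} with that $D_4$-form and imports the divisor statement wholesale. So your plan reduces the genuinely hard step to exactly the external citation the paper makes, without supplying an independent argument; also note that your ``hence with multiplicity one'' in the lower-bound paragraph is premature, since vanishing of a single theta factor only gives order at least one for the lift.

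The concrete flaw is in your first cuspidality argument. The claim that at the non-standard cusps ``the three factors are permuted by $\Gamma_4/\widetilde{\Gamma}_4$, so that at least one factor vanishes on each boundary component'' is a non sequitur: the $\mathcal{S}_4$-action permutes the $\Delta^{(j)}_{8,4A_1}$ among themselves but gives no information about the boundary value of any one of them at a cusp inequivalent to the standard one, and since $4A_1$ is not maximal there genuinely are two inequivalent zero-dimensional cusps (the paper records this in Section 4). What closes this in the paper is the criterion of \cite[Theorem 4.2]{GHS1}: the arithmetic lift of a Jacobi cusp form remains cuspidal whenever every isotropic subgroup of $D(L_2)$ is cyclic, which holds for $L_2(4A_1)$ because the only nonzero isotropic class in $D(4A_1)\cong\mathcal{C}_2^4$ is that of $\tfrac12(\varepsilon_1+\dots+\varepsilon_4)$. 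Your fallback route through $D_4$ is sound and is, again, the paper's route; but as written your $\Phi$-operator argument would not survive scrutiny.
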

\begin{proof}
 This function coincides with the cusp form of weight 24 for the lattice $D_{4}$ which was constructed in \cite[Theorem 4.4]{Wo1}.  Since $4A_{1}$ is a sublattice of $D_{4}$ we obtain a function with the modular behaviour stated above where the character $v_{\pi}$ appears due to the definition of $\vartheta_{4A_{1}}^{(j)}$ above. For a maximal even lattice the arithmetic lifting of a Jacobi form is a cusp form if the Fourier expansion ranges over all parameters with positive hyperbolic norm, compare \cite[Theorem 3.1]{G}. This characterization can be extended to all lattices with the property that every isotropic subgroup of $D(L_{2})$ is cyclic, see \cite[Theorem 4.2]{GHS1} for a proof. Since this is the case for the lattice $L_{2}(4A_{1})$ the function $\Delta_{24}^{4A_{1}}$ is a cusp form.
\end{proof}
The Proposition yields two more cusp forms for the tower.
\begin{cor}\label{cor:construction of anti symmetric modular form for 3A1 of weight 18}
 There are  cusp forms 
\[\Delta_{10}^{2A_{1}}\in\mathcal{S}_{10}(\Gamma_{2},v_{\pi})\quad\text{ and }\quad \Delta_{18}^{3A_{1}}\in\mathcal{S}_{18}(\Gamma_{3},v_{\pi})\] 
whose divisor equals 
the $\Gamma_{m}$-orbit of 
\[\mathcal{D}_{\varepsilon_{1}+\varepsilon_{m}}^{m}\quad, \quad m=2\text{ or }3\text{, respectively.}\]
\end{cor}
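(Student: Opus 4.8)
The plan is to produce both forms from $\Delta_{24}^{4A_1}$ of Proposition \ref{cusp form of weight 24 for 4A1} by descending the tower $4A_1\supset 3A_1\supset 2A_1$: at each step I restrict to the sub-domain $\mathcal{D}^{m-1}\cong\mathcal{D}_{\varepsilon_m}^{m}$ of (\ref{rational quadratc divisors for A1 tower}) and divide out the reflective form $G_{12-2m}^{mA_1}$ furnished by Theorem \ref{Lifting construction of reflective modular forms for A1 tower}. Explicitly I would set
\[\Delta_{18}^{3A_1}:=\left(\Delta_{24}^{4A_1}|_{\mathcal{D}^3}\right)/G_6^{3A_1}\,,\qquad \Delta_{10}^{2A_1}:=\left(\Delta_{18}^{3A_1}|_{\mathcal{D}^2}\right)/G_8^{2A_1}\,,\]
so that the weights come out as $24-6=18$ and $18-8=10$.

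First I would verify that $\Delta_{24}^{4A_1}|_{\mathcal{D}^3}\not\equiv 0$; this is clear from the product expression $\Delta_{24}^{4A_1}=\Delta_{8,4A_1}^{(1)}\Delta_{8,4A_1}^{(2)}\Delta_{8,4A_1}^{(3)}$, none of whose factors degenerates when only the last coordinate is set to zero. To see that the quotient is holomorphic I would show that this restriction vanishes to order at least two along the $\Gamma_3$-orbit of $\mathcal{D}_{\varepsilon_3}^3$: that orbit lies in $\operatorname{div}(\Delta_{24}^{4A_1})|_{\mathcal{D}^3}$, and the reflection $M=\operatorname{diag}(1,1,K,1,1)\in\Gamma_3$ used in step (iii) of the proof of Theorem \ref{theorem:graded ring with trivial character} forces the order to be even. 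Since $G_6^{3A_1}$ vanishes there to order exactly two, Koecher's principle \cite[p.\ 209]{Ba} gives holomorphy of the quotient. The character is $v_{\pi}$, inherited from $\Delta_{24}^{4A_1}$: the sign character $v_{\pi}$ for $\Gamma_4$ restricts to that for $\Gamma_3$ under $\Gamma_3\hookrightarrow\Gamma_4$ (a permutation of $\{1,2,3\}$ keeps its sign in $\mathcal{S}_4$), whereas $G_6^{3A_1}$ carries the trivial character.

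The divisor I would compute by intersecting the $\Gamma_4$-orbit of $\mathcal{D}_{\varepsilon_1+\varepsilon_4}^4$ with $\mathcal{D}^3=\mathcal{D}_{\varepsilon_4}^4$. The components $\mathcal{D}_r$ with $r=\varepsilon_i\pm\varepsilon_4$ ($i\le 3$) both restrict to $\mathcal{D}_{\varepsilon_i}^3$, and together they account for the order-two vanishing matching $\operatorname{div}(G_6^{3A_1})$; the components $\mathcal{D}_{\varepsilon_i+\varepsilon_j}$ with $i,j\le 3$ restrict to $\mathcal{D}_{\varepsilon_i+\varepsilon_j}^3$, which constitute the $\Gamma_3$-orbit of $\mathcal{D}_{\varepsilon_1+\varepsilon_3}^3$. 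Hence after division $\operatorname{div}(\Delta_{18}^{3A_1})$ is exactly this last orbit. Running the same bookkeeping once more through $3A_1\supset 2A_1$ yields $\operatorname{div}(\Delta_{10}^{2A_1})=\Gamma_2\cdot\mathcal{D}_{\varepsilon_1+\varepsilon_2}^2$, as claimed.

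It remains to see that both forms are cuspidal. Restriction to a boundary-compatible sub-domain sends cusp forms to cusp forms, since the cusps of $\mathcal{D}^{m-1}$ embed into those of $\mathcal{D}^m$ and the corresponding $\Phi$-operator values agree; thus each restriction stays cuspidal. For the quotient I would invoke the cusp criterion of \cite[Theorem 4.2]{GHS1} exactly as in Proposition \ref{cusp form of weight 24 for 4A1}: the lattices $L_2(2A_1)$ and $L_2(3A_1)$ have only cyclic isotropic subgroups in their discriminant groups, so that the arithmetic lift of a Jacobi form with Fourier support in the positive hyperbolic cone is a cusp form, and $\Delta_{10}^{2A_1},\Delta_{18}^{3A_1}$ are of this shape (for $m=2$ the relevant form is realised by the index-one Jacobi cusp form $\eta^{18}\vartheta(\tau,z_1-z_2)\vartheta(\tau,z_1+z_2)$). \emph{The main obstacle} I anticipate is the vanishing-order analysis: one must prove the restriction vanishes along the $\mathcal{D}_{\varepsilon_m}$-orbit to order exactly two---enough to divide by $G_{12-2m}^{mA_1}$ but no more, so the quotient is nonzero and carries the reduced divisor above---and then confirm that dividing a cusp form by the (likewise cuspidal) reflective form preserves cuspidality.
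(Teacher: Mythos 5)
Your construction is exactly the paper's: restrict $\Delta_{24}^{4A_{1}}$ to $\mathcal{D}^{3}$, observe that the components $\mathcal{D}_{\varepsilon_{i}+\varepsilon_{4}}^{4}$ and $\mathcal{D}_{\varepsilon_{i}-\varepsilon_{4}}^{4}$ both cut out $\mathcal{D}_{\varepsilon_{i}}^{3}$ so the restriction vanishes to order two there, divide by $G_{6}^{3A_{1}}$ via Theorem \ref{Lifting construction of reflective modular forms for A1 tower} and Koecher's principle, and iterate once more to reach $2A_{1}$ — the divisor bookkeeping matches the paper's verbatim. The cuspidality step is handled about as tersely in the paper (which simply appeals to an analysis of the Fourier expansion) as in your sketch, so the proposal is correct and follows essentially the same route.
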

\begin{proof}
 We consider the cusp form $\Delta_{24}^{4A_{1}}$ in Proposition \ref{cusp form of weight 24 for 4A1}. The divisor is the sum of the six $\widetilde{\Gamma}_{m}$-orbits which are represented by $\mathcal{D}_{1}^{4}+\mathcal{D}_{2}^{4}$ where
\[\begin{aligned}
   &\mathcal{D}_{1}^{4}:=\mathcal{D}_{\varepsilon_{1}+\varepsilon_{2}}^{4}+\mathcal{D}_{\varepsilon_{1}+\varepsilon_{3}}^{4}+\mathcal{D}_{\varepsilon_{2}+\varepsilon_{3}}^{4}\,,&\\
&\mathcal{D}_{2}^{4}:=\mathcal{D}_{\varepsilon_{1}+\varepsilon_{4}}^{4}+\mathcal{D}_{\varepsilon_{2}+\varepsilon_{4}}^{4}+\mathcal{D}_{\varepsilon_{3}+\varepsilon_{4}}^{4}\,.&
  \end{aligned}\] 
The group $\operatorname{O}(D(4A_{1}))\cong \mathcal{S}_{4}$ acts $2$-fold transitive on the set 
\[\{\varepsilon_{\mu}\,|\,1\leq \mu \leq 4\}\]
by relabelling the indices. The group $\mathcal{S}_{4}$ contains $\mathcal{S}_{3}$ as the subgroup fixing $\varepsilon_{4}$. 
The sets $\mathcal{D}_{\varepsilon_{k}+\varepsilon_{4}}^{4},\mathcal{D}_{\varepsilon_{k}-\varepsilon_{4}}^{4}$ where $k=1,2,3$ belong to the same $\widetilde{\Gamma}_{4}$-orbit but constitute different $\widetilde{\Gamma}_{3}$-orbits.  Hence the restriction of $\Delta_{24}^{4A_{1}}$ to $\mathcal{D}^{3}$ has the divisor
\[\mathcal{D}_{1}^{3}+2\,\mathcal{D}_{2}^{3}\]
with respect to the action of $\widetilde{\Gamma}_{3}$ where
\[\mathcal{D}_{j}^{3}:=\mathcal{D}_{j}^{4}\cap \mathcal{D}^{3}\,,\,j=1,2\,.\]
Moreover the $\Gamma_{3}$-orbit of the restriction is represented by 
\[\mathcal{D}_{\varepsilon_{1}+\varepsilon_{3}}^{3}+2\,\mathcal{D}_{\varepsilon_{3}}^{3}\,.\]
According to Theorem \ref{Lifting construction of reflective modular forms for A1 tower} we can define
\[\Delta_{18}^{3A_{1}}:=\frac{\Delta_{24}^{4A_{1}}\Big|_{\mathcal{D}^{3}}}{G_{6}^{3A_{1}}}\]
and obtain a modular form with the desired poroperties by Koecher's principle. Now the same construction is done with $\Delta_{18}^{3A_{1}}$ instead and one defines
\[\Delta_{10}^{2A_{1}}:=\frac{\Delta_{18}^{3A_{1}}\Big|_{\mathcal{D}^{2}}}{G_{8}^{2A_{1}}}\]
which has the correct divisor. An analysis of the Fourier expansion of $\Delta_{10}^{2A_{1}},\Delta_{18}^{3A_{1}}$ yields that both functions are cusp forms.
\end{proof}
In the following we consider another type of modular forms. Let $II_{2,26}$ be the unique (up to isomorphism) even unimodular lattice of signature $(2,26)$. We define 
\[\index{$R_{-2}^{II_{2,26}}$}R_{-2}^{II_{2,26}}:=\{r\in II_{2,26}\,|\,(r,r)=-2\}\,.\]
The following statement is due to Borcherds and can be found in \cite[Theorem 10.1 and Example 2]{Bo2}.
\begin{theorem}[Borcherds]\label{theorem:construction of Borcherds Phi12}
 There is a holomorphic modular form $\Phi_{12}$ with the properties \index{$\Phi_{12}$}\[\Phi_{12}\in\mathcal{M}_{12}(\operatorname{O}(II_{2,26})^{+},\det)\quad, \quad \operatorname{div}(\Phi_{12}) =\bigcup_{r\in R^{II_{2,26}}_{-2}}{\mathcal{D}_{r}(II_{2,26})}\] 
where the vanishing order is exactly one on each irreducible component.
\end{theorem}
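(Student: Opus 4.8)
The plan is to realise $\Phi_{12}$ as a \emph{Borcherds product}, that is, as the image of a weakly holomorphic modular form under Borcherds' singular theta lift attached to the lattice $II_{2,26}$. Since $II_{2,26}$ is unimodular its discriminant group is trivial, so the relevant Weil representation is trivial and the input is an ordinary scalar weakly holomorphic modular form for $\sl$. For a lattice of signature $(2,n)$ the correct input weight is $1-n/2$; here $n=26$, which gives weight $-12$. The space of weakly holomorphic modular forms of weight $-12$ for $\sl$ is $\Delta_{12}^{-1}\cdot\C[j]$, and the canonical choice is
\[
 f(\tau)=\frac{1}{\Delta_{12}(\tau)}=\eta(\tau)^{-24}=\sum_{n\geq -1}c(n)\,e(n\tau)\,,\qquad c(-1)=1\,,\ c(0)=24\,,
\]
where I write $e(x)=\exp(2\pi\ii x)$.

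First I would feed $f$ into Borcherds' construction (Theorem 13.3 in his paper on automorphic forms with singularities on Grassmannians). This produces a meromorphic automorphic form $\Phi_{12}$ on $\mathcal{D}$ whose weight equals $c(0)/2=12$ and whose divisor is the Heegner divisor
\[
 \operatorname{div}(\Phi_{12})=\sum_{\substack{\lambda\in II_{2,26}\\ (\lambda,\lambda)<0}}\Bigl(\sum_{k\geq 1}c\bigl(k^{2}(\lambda,\lambda)/2\bigr)\Bigr)\,\mathcal{D}_{\lambda}(II_{2,26})\,.
\]
Because $c(m)=0$ for $m\leq -2$, only primitive vectors $\lambda$ with $(\lambda,\lambda)=-2$ contribute, and for each of them the inner sum collapses to $c(-1)=1$ (the terms with $k\geq 2$ involve $c(-k^{2})=0$). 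Since all these multiplicities are nonnegative, $\Phi_{12}$ has no poles, hence is holomorphic, and
\[
 \operatorname{div}(\Phi_{12})=\bigcup_{r\in R^{II_{2,26}}_{-2}}\mathcal{D}_{r}(II_{2,26})
\]
with vanishing order exactly one on each irreducible component. This yields the weight $12$, the holomorphy, and the divisor claimed in the statement.

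It remains to identify the multiplier system as $\det$. Here I would argue reflectively: for a $(-2)$-vector $r$ the reflection $\sigma_{r}\in\operatorname{O}(II_{2,26})^{+}$ fixes the mirror $\mathcal{D}_{r}(II_{2,26})$ pointwise and acts by $-1$ on the normal direction, while $\Phi_{12}$ vanishes to first order there; comparing leading terms forces $\Phi_{12}\circ\sigma_{r}=-\Phi_{12}=\det(\sigma_{r})\,\Phi_{12}$. Combined with Borcherds' explicit description of the multiplier system (which is governed by the Weyl vector of the input data), this pins the character down to $\det$. The genuine obstacle is none of this bookkeeping but the input machinery itself: proving convergence of the regularised theta integral, equivalently of the infinite product
\[
 \Phi_{12}(Z)=e\bigl((\rho,Z)\bigr)\prod_{\lambda>0}\bigl(1-e((\lambda,Z))\bigr)^{c((\lambda,\lambda)/2)}
\]
with $\rho$ the Weyl vector, and verifying that the lift really transforms as an automorphic form with the asserted multiplier. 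These analytic and representation-theoretic facts constitute the content of Borcherds' theorem, which I would cite rather than reprove.
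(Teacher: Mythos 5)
Your proposal is correct and follows essentially the same route as the paper, which simply cites Borcherds (Theorem 10.1 and Example 2 of his infinite products paper) for exactly this construction: the lift of $1/\Delta_{12}=q^{-1}+24+\dots$ for the unimodular lattice $II_{2,26}$, giving weight $c(0)/2=12$, simple zeros precisely along the $(-2)$-divisors from $c(-1)=1$, and the character $\det$ detected on $(-2)$-reflections via the first-order vanishing. Your sketch fills in the standard bookkeeping the paper leaves implicit, while correctly deferring the analytic core (convergence of the regularised theta integral and the transformation law) to Borcherds.
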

In \cite[Example 2]{Bo2} Borcherds computes the Fourier expansion of $\Phi_{12}$. It turns out that $\Phi_{12}$ reflects the Weyl denominator formula for the fake monster Lie algebra. 
\vspace{3mm}

\noindent Let $\mathcal{N}$ be the Niemeier lattice with root system $24A_{1}$, see \cite{Nie}. Since \[II_{2,26}\cong U \perp U_{1}\perp \mathcal{N}\] where 
\[U=\langle e,f\rangle,U_{1}=\langle e_{1},f_{1}\rangle\] 
are two integral hyperbolic planes we can consider the natural embedding 
\begin{equation}\label{chioce of embedding into II 2,26}
 L_{2}(mA_{1})\hookrightarrow U \perp U_{1}\perp \mathcal{N} \quad,\quad m\in\{1,2,3,4\} 
\end{equation}
which is induced by $mA_{1}\hookrightarrow 24A_{1}$. 
Let $K_{m}$ be the orthogonal complement of $L_{2}(mA_{1})$ in $II_{2,26}$. Each vector $r\in II_{2,26}$ has a unique decomposition
\[r=\alpha(r)+\beta(r)\quad,\quad \alpha(r)\in L_{2}(mA_{1})^{\vee}\,,\,\beta(r)\in K_{m}^{\vee}\,.\]
 We set
\[\index{$R_{-2}(K)$}R_{-2}(K_{m})=\{r\in II_{2,26}\,|\,(r,r)=-2\,,\, r\perp L_{2}(mA_{1}) \},\]
which is contained in the negative definite lattice $K_{m}$ and hence finite. Consequently we define \index{$\operatorname{N}(K)$}$N(K_{m})=\frac{\sharp R_{-2}(K_{m})}{2}\in\N\,.$
The next statement is a special case of \cite[Theorem 8.2 and Corollary 8.12]{GHS2} and describes the construction of a quasi-pullback from Borcherds function $\Phi_{12}$.
\begin{theorem}\label{theorem:quasi pullback form phi12}
Consider a primitive embedding $L_{2}\hookrightarrow II_{2,26}$ and denote by $K$ the orthogonal complement of $L_{2}$ in $II_{2,26}$.
\[\left.\Phi\right|^{\textit{(QP)}}_{L_{2}}(\mathcal{Z})=\displaystyle\left.\frac{\Phi_{12}(\mathcal{Z})}{\prod_{r\in R_{-2}(K)/{\{\pm 1\}}}{(\mathcal{Z},r)}}\right|_{\mathcal{D}(L_{2})},\]
where in the product one fixes a set of representatives for $R_{-2}(K)/\{\pm 1\}$. Then  $\left.\Phi\right|^{\textit{(QP)}}_{L_{2}}$ belongs to $\mathcal{M}_{12+N(K)}(\widetilde{\operatorname{O}}(L_{2})^{+},\det)$
and vanishes exactly on all rational quadratic divisors  
\[\mathcal{D}_{\alpha(r)}=\{[\mathcal{Z}]\in\mathcal{D}(L_{2})\,|\,(\mathcal{Z},\alpha(r))=0\}\] 
where $r$ runs through the set \(R_{-2}^{II_{2,26}}\) and $(\alpha(r),\alpha(r))<0$. If $N(K)>0$ we say that $\left.\Phi\right|^{\textit{(QP)}}_{L_{2}}$ is a quasi-pullback of $\Phi_{12}$\index{quasi-pullback of $\Phi_{12}$}. In this case $\left.\Phi\right|^{\textit{(QP)}}_{L_{2}}$ is a cusp form. 
 \end{theorem}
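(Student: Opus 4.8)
The plan is to carry out the division on the full cone $\mathcal{D}(II_{2,26})^{\bullet}$ and only restrict to $\mathcal{D}(L_{2})^{\bullet}$ afterwards. Set $P(\mathcal{Z})=\prod_{r}(\mathcal{Z},r)$, the product over a set of representatives of $R_{-2}(K)/\{\pm 1\}$. Each factor is a linear form whose zero locus on the cone is exactly $\mathcal{D}_{r}(II_{2,26})$ with multiplicity one, so $\operatorname{div}(P)=\sum_{r\in R_{-2}(K)/\{\pm 1\}}\mathcal{D}_{r}(II_{2,26})$ as a reduced divisor. By Borcherds' Theorem~\ref{theorem:construction of Borcherds Phi12}, $\operatorname{div}(\Phi_{12})=\sum_{s\in R_{-2}^{II_{2,26}}/\{\pm 1\}}\mathcal{D}_{s}(II_{2,26})$, again reduced. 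Since $R_{-2}(K)\subseteq R_{-2}^{II_{2,26}}$ we get $\operatorname{div}(P)\leq\operatorname{div}(\Phi_{12})$, and therefore $\Phi_{12}/P$ extends to a holomorphic function on $\mathcal{D}(II_{2,26})^{\bullet}$ with divisor $\sum_{s}\mathcal{D}_{s}$, summed over $s\in(R_{-2}^{II_{2,26}}\setminus R_{-2}(K))/\{\pm 1\}$. A component $\mathcal{D}_{s}$ contains $\mathcal{D}(L_{2})$ if and only if $s\perp L_{2}$, i.e.\ $s\in R_{-2}(K)$; hence none of the surviving components contains $\mathcal{D}(L_{2})$, and $\left.(\Phi_{12}/P)\right|_{\mathcal{D}(L_{2})^{\bullet}}=\Phi|^{(\mathrm{QP})}_{L_{2}}$ is a nonzero holomorphic function. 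This is where the holomorphy of the quasi-pullback is actually bought, and the divisor inequality makes it essentially bookkeeping.

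Homogeneity and the character then follow by direct computation. From $\Phi_{12}(t\mathcal{Z})=t^{-12}\Phi_{12}(\mathcal{Z})$ and $P(t\mathcal{Z})=t^{N(K)}P(\mathcal{Z})$ the quotient is homogeneous of weight $12+N(K)$. For the character I would use Nikulin's gluing: any $g\in\widetilde{\operatorname{O}}(L_{2})^{+}$ acts trivially on $D(L_{2})$, so gluing it to $\operatorname{id}_{K}$ produces an isometry $\tilde g\in\operatorname{O}(II_{2,26})^{+}$ with $\tilde g|_{L_{2}}=g$ and $\tilde g|_{K}=\operatorname{id}$; since the positive-definite plane sits inside $L_{2}\otimes\R$, this $\tilde g$ lies in $\operatorname{O}(II_{2,26})^{+}$, preserves $\mathcal{D}(L_{2})$, and satisfies $\det\tilde g=\det g$. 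Because $\tilde g$ fixes $K$ pointwise and $r\in K$, one has $(\tilde g\mathcal{Z},r)=(\mathcal{Z},\tilde g^{-1}r)=(\mathcal{Z},r)$, so $P$ is $\tilde g$-invariant and $\Phi|^{(\mathrm{QP})}_{L_{2}}(g\mathcal{Z})=\det(g)\,\Phi|^{(\mathrm{QP})}_{L_{2}}(\mathcal{Z})$; together with Koecher's principle this gives membership in $\mathcal{M}_{12+N(K)}(\widetilde{\operatorname{O}}(L_{2})^{+},\det)$. The divisor is read off from the surviving components: on $\mathcal{D}(L_{2})^{\bullet}$ one has $(\mathcal{Z},s)=(\mathcal{Z},\alpha(s))$, so each such component meets $\mathcal{D}(L_{2})$ in $\mathcal{D}_{\alpha(s)}$, which is a genuine rational quadratic divisor exactly when $(\alpha(s),\alpha(s))<0$.

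The genuinely delicate assertion, and the step I expect to be the main obstacle, is cuspidality when $N(K)>0$. I would analyse the boundary behaviour through the Siegel $\Phi$-operator of \cite{BaBor}. Writing $\mathcal{Z}=\mathcal{Z}_{0}+w$ near $\mathcal{D}(L_{2})^{\bullet}$ with $\mathcal{Z}_{0}\in L_{2}\otimes\C$ and $w\in K\otimes\C$ small, one has $(\mathcal{Z},r)=(w,r)$ for $r\in K$, so $\Phi|^{(\mathrm{QP})}_{L_{2}}(\mathcal{Z}_{0})$ is the leading Taylor coefficient of $\Phi_{12}$ in the normal directions $w$, contracted against $\prod_{r}(w,r)$. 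The plan is to reduce the boundary value of $\Phi|^{(\mathrm{QP})}_{L_{2}}$ at a cusp to a quasi-pullback of the boundary value of $\Phi_{12}$ onto a lattice of lower rank; since $\Phi_{12}$ is a Borcherds product whose expansion at each cusp factors through its Weyl vector, and whose roots perpendicular to $L_{2}$ are precisely those of $R_{-2}(K)$, the presence of at least one such root ($N(K)>0$) should force the surviving leading Fourier--Jacobi coefficient into the strictly positive-norm range, so that $\Phi$ annihilates it at every $0$- and $1$-dimensional cusp. I would treat the two cusp types separately, in each case reusing the divisor bookkeeping of the first paragraph to conclude that the boundary value vanishes. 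This boundary analysis, rather than the holomorphy or the transformation law, is where the real work of the proof lies.
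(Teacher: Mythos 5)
First, a structural point: the paper offers no proof of this theorem at all --- it is imported verbatim as ``a special case of \cite[Theorem 8.2 and Corollary 8.12]{GHS2}'' --- so there is no internal argument to compare yours against. Judged on its own, the first two paragraphs of your proposal are correct and are the standard argument: the divisor inequality $\operatorname{div}(P)\leq\operatorname{div}(\Phi_{12})$ on the ambient cone gives holomorphy of $\Phi_{12}/P$; the observation that $\mathcal{D}_{s}\supseteq\mathcal{D}(L_{2})$ if and only if $s\in R_{-2}(K)$ gives both the nonvanishing of the restriction and the identification of its zero set with the union of the $\mathcal{D}_{\alpha(r)}$; and Nikulin's gluing of $g\in\widetilde{\operatorname{O}}(L_{2})^{+}$ with $\operatorname{id}_{K}$ (which uses primitivity of the embedding and unimodularity of $II_{2,26}$) gives the weight $12+N(K)$ and the character $\det$.

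The cuspidality claim, however, is left as a plan with a genuine gap, and the missing ingredient is specific. What actually closes the argument is a Fourier-coefficient computation at a zero-dimensional cusp: up to a nonzero constant, the coefficient of $\left.\Phi\right|^{(\mathrm{QP})}_{L_{2}}$ at $\ell\in L_{2}^{\vee}$ is $\sum_{m}a(m)\prod_{r}(m,r)$, the sum running over $m$ in the dual lattice with $\alpha(m)=\ell$, where $a(m)$ are the Fourier coefficients of $\Phi_{12}$. Two facts then finish it: (1) $\Phi_{12}$ has singular weight $12=26/2-1$, so by the fake monster denominator formula $a(m)=0$ unless $(m,m)=0$ (Koecher's bound $(m,m)\geq 0$ would already suffice here); and (2) if $\beta(m)=0$ then $m\perp K$, so every factor $(m,r)$ vanishes --- this is exactly where $N(K)>0$ enters. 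A surviving term therefore has $\beta(m)\neq 0$, whence $(\ell,\ell)=(m,m)-(\beta(m),\beta(m))>0$ because $K$ is negative definite, and support on strictly positive norm vectors at every zero-dimensional cusp is the cusp-form criterion. Your sketch names none of this mechanism: ``factors through its Weyl vector'' and ``should force the surviving leading Fourier--Jacobi coefficient into the strictly positive-norm range'' assert the conclusion without the two inputs above, and you explicitly defer the work (``this is where the real work of the proof lies''). As written, the cuspidality assertion is therefore not proved.
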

The choice of the embedding (\ref{chioce of embedding into II 2,26}) yields another four modular forms with respect to a character. 
\begin{theorem}\label{baby monster type modular forms for A1 tower}
Let $m\in\{1,2,3,4\}$. There exists a cusp form 
\[F_{36-m}^{mA_{1}}\in \mathcal{S}_{36-m}(\Gamma_{m},\det v_{\pi}^{\kappa})\quad ,\quad \kappa\in\{0,1\}\] 
whose divisor is represented by the sum of the two different $\Gamma_{m}$-orbits
\[\mathcal{D}_{e_{1}-f_{1}}^{m}+\mathcal{D}_{\varepsilon_{m}}^{m}\,.\]
\end{theorem}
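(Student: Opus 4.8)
The plan is to construct $F_{36-m}^{mA_1}$ as the quasi-pullback of Borcherds' function $\Phi_{12}$ along the chain of embeddings $L_2(mA_1)\hookrightarrow II_{2,26}=U\perp U_1\perp\mathcal{N}$ fixed in (\ref{chioce of embedding into II 2,26}), and then to read off its weight, character and divisor from Theorem \ref{theorem:quasi pullback form phi12}. First I would compute $N(K_m)$ for the orthogonal complement $K_m$ of $L_2(mA_1)$ in $II_{2,26}$. Since the embedding is induced by $mA_1\hookrightarrow 24A_1$ inside the Niemeier lattice $\mathcal{N}$, and the two hyperbolic planes $U,U_1$ are split off, the only $(-2)$-vectors orthogonal to $L_2(mA_1)$ come from the roots of $\mathcal{N}$ that lie in $K_m$; concretely $R_{-2}(K_m)$ is governed by the $(24-m)$ remaining $A_1$-components together with any additional roots the glue vectors of $\mathcal{N}$ contribute. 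I expect $N(K_m)=24-m$, so that Theorem \ref{theorem:quasi pullback form phi12} gives weight $12+N(K_m)=36-m$, matching the claimed weight.

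Next I would identify the divisor. By Theorem \ref{theorem:quasi pullback form phi12} the quasi-pullback vanishes exactly on the divisors $\mathcal{D}_{\alpha(r)}$ where $r$ runs over $R_{-2}^{II_{2,26}}$ with $(\alpha(r),\alpha(r))<0$. The vectors $r\in II_{2,26}$ with nonzero projection $\alpha(r)$ of negative norm onto $L_2(mA_1)$ fall into two families: those whose projection is (up to scaling) a root $\varepsilon_\mu$ of $mA_1(-1)$, producing the orbit of $\mathcal{D}_{\varepsilon_m}^m$, and those involving the hyperbolic glue vectors of $U_1$, producing the orbit of $\mathcal{D}_{e_1-f_1}^m$. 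I would check that under the action of $\Gamma_m=\operatorname{O}(L_2(mA_1))^+$ these coalesce into exactly the two orbits $\mathcal{D}_{e_1-f_1}^m$ and $\mathcal{D}_{\varepsilon_m}^m$ stated in the theorem, using that $\operatorname{O}(D(mA_1))\cong\mathcal{S}_m$ permutes the $\varepsilon_\mu$ transitively so all the root divisors $\mathcal{D}_{\varepsilon_\mu}^m$ lie in a single $\Gamma_m$-orbit.

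Finally I would promote the modularity from $\widetilde{\Gamma}_m=\widetilde{\operatorname{O}}(L_2(mA_1))^+$ to the full group $\Gamma_m$ and pin down the character. Theorem \ref{theorem:quasi pullback form phi12} only gives a form in $\mathcal{M}_{36-m}(\widetilde{\Gamma}_m,\det)$; to obtain invariance under all of $\Gamma_m$ up to a character I would argue that the divisor just computed is $\Gamma_m$-invariant (being a union of full $\Gamma_m$-orbits), so for any $g\in\Gamma_m$ the transform $g.F_{36-m}^{mA_1}$ has the same divisor and weight, hence differs from $F_{36-m}^{mA_1}$ by a nonzero constant; this defines a character of $\Gamma_m$ extending $\det$ on $\widetilde{\Gamma}_m$. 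Since $\Gamma_m/\Gamma_m'\cong\langle\det,v_2,v_\pi\rangle$, the character is a product of $\det$, $v_2$ and $v_\pi$; comparing the behaviour of the divisor $\mathcal{D}_{e_1-f_1}^m+\mathcal{D}_{\varepsilon_m}^m$ under the reflections generating these characters (in particular evaluating how the coordinate permutation producing $v_\pi$ acts on the two orbits) isolates the exponent $\kappa\in\{0,1\}$ in $\det v_\pi^\kappa$. The cusp form property is automatic from Theorem \ref{theorem:quasi pullback form phi12} once $N(K_m)>0$. The main obstacle I anticipate is the exact enumeration of $R_{-2}(K_m)$ — in particular verifying that the glue vectors of $\mathcal{N}$ contribute no extra orthogonal roots beyond the expected $24-m$ copies of $A_1$ — together with the careful orbit bookkeeping needed to collapse the many root divisors into precisely the two stated $\Gamma_m$-orbits and to determine $\kappa$.
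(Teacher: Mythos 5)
Your proposal follows essentially the same route as the paper: the quasi-pullback of $\Phi_{12}$ along the embedding induced by $mA_{1}\hookrightarrow 24A_{1}$, the count $N(K_{m})=24-m$ giving weight $36-m$, the identification of the divisor with the $(-2)$-root divisors of $L_{2}(mA_{1})$, and the collapse of the $m+1$ orbits $e_{1}-f_{1},\varepsilon_{1},\dots,\varepsilon_{m}$ into two $\Gamma_{m}$-orbits via the $\mathcal{S}_{m}$-action. The one place you diverge is the promotion from $\widetilde{\Gamma}_{m}$ to $\Gamma_{m}$: the paper invokes Nikulin's extension criterion to lift every $g\in\Gamma_{m}$ to $\operatorname{O}(II_{2,26})^{+}$, so that modularity under the larger group is automatic, whereas you deduce it a posteriori from the $\Gamma_{m}$-invariance of the divisor (so that $g.F/F$ is a nowhere-vanishing weight-zero form, hence constant). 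Both arguments are sound; the paper's is structural and also settles the orbit bookkeeping in one stroke, while yours is more self-contained but leaves you the extra task of pinning down the resulting character, for which you should use that the vanishing order along $\mathcal{D}_{e_{1}-f_{1}}^{m}$ and $\mathcal{D}_{\varepsilon_{m}}^{m}$ is odd, forcing the character to be $-1$ on the corresponding reflections and thereby excluding a $v_{2}$-factor. Finally, the verification you flag as the main obstacle --- that the glue vectors of $\mathcal{N}$ contribute neither extra roots to $K_{m}$ nor fractional projections $\alpha(r)$ of negative norm --- is indeed the only nontrivial computation, and the paper likewise only asserts it; it follows from the minimum weight $8$ of the Golay glue code together with $m\leq 4$.
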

\begin{proof}
 The results can be extracted from \cite{Gr}. However, we give a sketch of the proof, here, using Theorem \ref{theorem:quasi pullback form phi12}. The strategy of the proof is to construct modular forms as quasi-pullbacks of $\Phi_{12}$ with respect to the embedding (\ref{chioce of embedding into II 2,26}). We denote these forms by $F_{k_{m}}^{mA_{1}}$ for $m=1,2,3,4$. Since the number of $-2$-roots for the lattice $mA_{1}$ is exactly $2m$ we have $N(K_{m})=24-m$ and the weight $k_{m}$ of $F_{k_{m}}^{mA_{1}}$ is exactly $12+N(K_{m})=36-m$. The divisor of $F_{36-m}^{mA_{1}}$ is determined by all vectors $\alpha\in L_{2}(mA_{1})^{\vee},(\alpha,\alpha)< 0$ such that there exists a $\beta\in K_{m}^{\vee}$ satisfying $\alpha+\beta\in R_{-2}^{II_{2,26}}$. The choice of our embedding already implies $(\alpha,\alpha)=-2$ and $\beta(r)=0$. There are $m+1$ different orbits of $-2$-roots in $L_{2}(mA_{1})$ with respect to  $\widetilde{\Gamma}_{m}$ represented by 
 \[e_{1}-f_{1},\varepsilon_{1},\dots,\varepsilon_{m}\,.\]
The divisor is represented by the sum of these orbits. In \cite[Corollary 15.2]{N} the author gave a criterion to decide whether an element $g\in L_{2}(mA_{1})$ extends to $\operatorname{O}(II_{2,26})^{+}$. In our case  this is always possible, compare \cite[p. 122]{Gr}. Hence the maximal modular group is indeed larger and the divisor with respect to the larger group is represented by the two orbits stated above.
\end{proof}
We define the four functions 
\[H_{30}^{mA_{1}}:=\frac{F_{36-m}^{mA_{1}}}{\chi_{6-m}^{mA_{1}}}\quad \text{ where }\,m\in\{1,2,3,4\}\]
whose divisor is represented by the $\Gamma_{m}$-orbit of
\(\mathcal{D}_{e_{1}-f_{1}}^{m}.\)
The next Lemma 	is useful in order to determine the graded ring $\mathcal{A}(\Gamma'_{m})$.
\begin{lemma}\label{lemma:divisors of A1 modular forms with characer}
  Let $F\in\mathcal{M}_{k}(\Gamma_{m},\lambda)$ for some finite character $\lambda:\Gamma_{m}\to\C^{\ast}$ and $m=1,2,3,4$.
\begin{enumerate}[(i)]
 \item If $\lambda=\det v_{2}^{a} v_{\pi}^{b}$ where $a,b\in\{0,1\}$ then $\mathcal{D}_{\varepsilon_{m}}^{m}\leq \operatorname{div}(F)$. 
 \item If $\lambda=v_{\pi}v_{2}^{a}$ where $a\in\{0,1\}$  then $\mathcal{D}_{\varepsilon_{1}-\varepsilon_{m}}^{m}\leq \operatorname{div}(F)$. 
 \item If $\lambda=v_{2}v_{\pi}^{a}$ where $a\in\{0,1\}$ then $\mathcal{D}_{e_{1}-f_{1}}^{m}\leq \operatorname{div}(F)$. 
\end{enumerate}
 \end{lemma}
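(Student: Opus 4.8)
The plan is to derive all three statements from a single reflection principle. For each of the three vectors $r\in\{\varepsilon_m,\ \varepsilon_1-\varepsilon_m,\ e_1-f_1\}$ the relevant divisor is $\mathcal{D}_r^m$, and $r$ is a primitive vector of negative norm ($(r,r)=-2$ for $\varepsilon_m$ and $e_1-f_1$, and $(r,r)=-4$ for $\varepsilon_1-\varepsilon_m$). I first check that the reflection $\sigma_r(x)=x-\tfrac{2(x,r)}{(r,r)}r$ lies in $\Gamma_m$: it preserves $L_2(mA_1)$ because $(x,r)\in\Z$ for $r=\varepsilon_m,e_1-f_1$ and $(x,r)\in 2\Z$ for $r=\varepsilon_1-\varepsilon_m$ (here the even inner products of $mA_1$ enter), and it lies in $\operatorname{O}(V)^+$ since $r$ has negative norm. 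The principle is then: if $F\in\mathcal{M}_k(\Gamma_m,\lambda)$ vanishes to order $d$ along $\mathcal{D}_r^m$, then $\lambda(\sigma_r)=(-1)^d$. Indeed $\sigma_r$ fixes $\mathcal{D}_r^m$ pointwise and sends the transverse coordinate $(\mathcal{Z},r)$ to $-(\mathcal{Z},r)$; writing $F=(\mathcal{Z},r)^d\widetilde F$ with $\widetilde F$ not identically zero on $\mathcal{D}_r^m$ and comparing $F(\sigma_r\mathcal{Z})=\lambda(\sigma_r)F(\mathcal{Z})$ with $(\sigma_r\mathcal{Z},r)^d=(-1)^d(\mathcal{Z},r)^d$ yields the claim. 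In particular, once $\lambda(\sigma_r)=-1$ the order $d$ is odd, hence positive, so $\mathcal{D}_r^m\leq\operatorname{div}(F)$. It thus remains to tabulate $\det$, $v_\pi$ and $v_2$ on the three reflections.

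The determinant is immediate: $\det(\sigma_r)=-1$ for every reflection. For $v_\pi$ I use that it factors through the action on $D(mA_1)\cong\mathcal{C}_2^m$, realized as permutations of the classes $\tfrac12\varepsilon_1,\dots,\tfrac12\varepsilon_m$. The reflection $\sigma_{\varepsilon_m}$ sends $\tfrac12\varepsilon_m\mapsto-\tfrac12\varepsilon_m\equiv\tfrac12\varepsilon_m\pmod{mA_1}$ and fixes the other generators, so it acts trivially and $v_\pi(\sigma_{\varepsilon_m})=1$; likewise $\sigma_{e_1-f_1}$ fixes $mA_1$ entirely, so $v_\pi(\sigma_{e_1-f_1})=1$. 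On the other hand $\sigma_{\varepsilon_1-\varepsilon_m}$ interchanges $\varepsilon_1$ and $\varepsilon_m$ and fixes the remaining generators, inducing the transposition $(1\,m)\in\mathcal{S}_m$, whence $v_\pi(\sigma_{\varepsilon_1-\varepsilon_m})=\operatorname{sgn}(1\,m)=-1$. (For $m=1$ the character $v_\pi$ is trivial and $\varepsilon_1-\varepsilon_m=0$, so part (ii) is to be read for $m\geq 2$.)

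The values of $v_2$ are the heart of the argument. Since both $\sigma_{\varepsilon_m}$ and $\sigma_{\varepsilon_1-\varepsilon_m}$ belong to $\operatorname{O}(mA_1)\leq\ker v_2$, one has $v_2(\sigma_{\varepsilon_m})=v_2(\sigma_{\varepsilon_1-\varepsilon_m})=1$. The subtle value is $v_2(\sigma_{e_1-f_1})$, as $v_2$ is only defined through the composite $\Gamma_m\to\operatorname{Sp}(2,\mathbb{F}_2)\cong\mathcal{S}_6\xrightarrow{\operatorname{sgn}}\{\pm1\}$. Rather than making this map explicit I intend to read the value off the already constructed cusp form $\chi_{6-m}\in\mathcal{M}_{6-m}(\Gamma_m,\det v_2)$ of Theorem \ref{Lifting construction of reflective modular forms for A1 tower}, whose divisor is exactly the $\Gamma_m$-orbit of $\mathcal{D}_{\varepsilon_m}^m$. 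Because $(e_1-f_1,L_2(mA_1))=\Z$ (as $(e_1-f_1,e_1)=-1$), the vector $e_1-f_1$ has trivial image in $D(L_2)$, whereas $(\varepsilon_m,L_2(mA_1))=2\Z$ makes $\tfrac12\varepsilon_m$ a nontrivial discriminant class; by Eichler's criterion the two primitive $(-2)$-vectors are $\Gamma_m$-inequivalent, so $\mathcal{D}_{\varepsilon_m}^m$ and $\mathcal{D}_{e_1-f_1}^m$ lie in distinct orbits and $\chi_{6-m}$ does not vanish along $\mathcal{D}_{e_1-f_1}^m$. Applying the reflection principle to $\chi_{6-m}$ (vanishing order $0$) gives $\det v_2(\sigma_{e_1-f_1})=1$, and with $\det(\sigma_{e_1-f_1})=-1$ this forces $v_2(\sigma_{e_1-f_1})=-1$.

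Assembling the table — $\det\equiv-1$ on all three reflections, while $(v_\pi,v_2)=(1,1),(-1,1),(1,-1)$ on $(\sigma_{\varepsilon_m},\sigma_{\varepsilon_1-\varepsilon_m},\sigma_{e_1-f_1})$ — the three parts follow by evaluating the prescribed character on its matching reflection: $\det v_2^a v_\pi^b(\sigma_{\varepsilon_m})=-1$ gives (i); $v_\pi v_2^a(\sigma_{\varepsilon_1-\varepsilon_m})=-1$ gives (ii); and $v_2 v_\pi^a(\sigma_{e_1-f_1})=-1$ gives (iii), in each case through the reflection principle. I expect the main obstacle to be the single value $v_2(\sigma_{e_1-f_1})=-1$: computing it directly would require unwinding the exceptional isomorphism $\operatorname{Sp}(2,\mathbb{F}_2)\cong\mathcal{S}_6$ together with the mod-$2$ reduction $\Gamma_m\to\operatorname{Sp}(2,\mathbb{F}_2)$, which is why the plan routes around it via the known divisor of $\chi_{6-m}$; the only point there demanding care is the verification that $\mathcal{D}_{\varepsilon_m}^m$ and $\mathcal{D}_{e_1-f_1}^m$ are genuinely different $\Gamma_m$-orbits.
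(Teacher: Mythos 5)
Your argument is correct and follows the same strategy as the paper: identify the reflection whose fixed locus is the divisor in question, show the character takes the value $-1$ on it, and conclude via the local factorization $F=(\mathcal{Z},r)^{d}\widetilde{F}$ that the vanishing order must be odd. The character values $\det(\sigma_r)=-1$, $v_{\pi}(\sigma_{\varepsilon_m})=v_{\pi}(\sigma_{e_1-f_1})=1$, $v_{\pi}(\sigma_{\varepsilon_1-\varepsilon_m})=-1$ and $v_2(\sigma_{\varepsilon_m})=v_2(\sigma_{\varepsilon_1-\varepsilon_m})=1$ (via $\operatorname{O}(mA_1)\leq\ker v_2$) are exactly what the paper uses.

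The one place you genuinely depart from the paper is the value $v_2(\sigma_{e_1-f_1})=-1$. The paper obtains it directly from Kl\"ocker's explicit description of $v_2$ (Section 1.6.3 and the formula for $P$ in the proof of Corollary 1.23 there), i.e.\ by actually evaluating the composite $\Gamma_m\to\operatorname{Sp}(2,\mathbb{F}_2)\to\mathcal{S}_6\to\{\pm1\}$ on this reflection. You instead run the reflection principle backwards on $\chi_{6-m}\in\mathcal{M}_{6-m}(\Gamma_m,\det v_2)$: since its divisor is the $\Gamma_m$-orbit of $\mathcal{D}_{\varepsilon_m}^{m}$ with multiplicity one and $e_1-f_1$ (divisor $1$ in $L_2$) cannot be $\Gamma_m$-equivalent to $\varepsilon_m$ (divisor $2$), the vanishing order of $\chi_{6-m}$ along $\mathcal{D}_{e_1-f_1}^{m}$ is zero, forcing $(\det v_2)(\sigma_{e_1-f_1})=1$ and hence $v_2(\sigma_{e_1-f_1})=-1$. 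This is a valid and self-contained shortcut — it trades the explicit mod-$2$ computation for a dependence on Theorem \ref{Lifting construction of reflective modular forms for A1 tower} (which is imported from the literature and logically prior to the Lemma, so no circularity), at the cost of having to verify the orbit separation, which you do correctly via the divisor invariant. Both routes are sound; the paper's is more direct, yours avoids unwinding the exceptional isomorphism entirely.
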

\begin{proof}
\begin{enumerate}[(i)]
  \item The reflection $\sigma_{\varepsilon_{m}}$  satisfies $\lambda(\sigma_{\varepsilon_{m}})=-1$. Hence its fixed locus $\mathcal{D}_{\varepsilon_{m}}^{m}$ is contained in $\operatorname{div}(F)$. 
 \item We have $\lambda(\sigma_{\varepsilon_{j}-\varepsilon_{l}})=-1$ where $j,l$ are distinct numbers in $\{1,\dots,m\}$ and this reflection induces the permutation $z_{j}\mapsto z_{k},z_{k}\mapsto z_{j}$ with respect to our standard basis (\ref{choice of coordinates for mA1}). This shows that the fixed locus of this reflection is part of $\operatorname{div}(F)$.
 \item We consider the reflection $\sigma_{1}=\sigma_{e_{1}-f_{1}}$.  From \cite[Section 1.6.3 ]{Kl} and the formula for $P$ in the proof of \cite[Corollary 1.23]{Kl} we deduce that $v_{2}(\sigma_{1})=-1$. As $\mathcal{D}_{e_{1}-f_{1}}^{m}$ is exactly the fixed locus of this reflection we are done.
\end{enumerate}
\end{proof}
Now we are able to prove the main theorem by extending the diagram given in Theorem \ref{theorem: mod forms A1 tower}.
\begin{mainproof}
 Let $F\in \mathcal{A}(\Gamma'_{m})$. Without loss of generality we can assume that $F$ is homogeneous of weight $k$. If the character of $F$ is trivial the assertion follows from Theorem \ref{theorem:graded ring with trivial character}. In the general situation we can use Lemma \ref{lemma:divisors of A1 modular forms with characer} to divide $F$ by one of the forms $H_{30}^{mA_{1}},\chi_{6-m}^{mA_{1}}$ or $\Delta_{k_{m}}^{mA_{1}}$ in the case $m\neq 1$. By virtue of Koecher's principle this process yields a modular form with trivial character and we are back in the first case. 
\end{mainproof}
Let $m\in\{2,3,4\}$.  The only relations among the generators of $\mathcal{A}(\Gamma'_{m})$ are given by 
\[(\Delta_{k_{m}}^{mA_{1}})^{2}=P_{mA_{1}}(\mathcal{E}_{4}^{mA_{1}}, \mathcal{E}_{6}^{mA_{1}}, \chi_{6-m}^{mA_{1}}, F_{14-2m}^{mA_{1}},\dots, F_{12}^{mA_{1}})\]
and
\[(H_{30}^{mA_{1}})^{2}=Q_{mA_{1}}(\mathcal{E}_{4}^{mA_{1}}, \mathcal{E}_{6}^{mA_{1}}, \chi_{6-m}^{mA_{1}}, F_{14-2m}^{mA_{1}},\dots, F_{12}^{mA_{1}})\]
where $P_{mA_{1}},Q_{mA_{1}}\in \C[X_{1},\dots,X_{m+3}]$ are uniquely determined  polynomials. In the case $m=1$ our methods yield Igusa's description.

\section{Eisenstein series}
Let $L$ be a positive definite even lattice. In \cite{BrK} the authors investigated vector valued Eisenstein series. We denote the vector valued Eisenstein series of weight $k$ with respect to the simplest cusp by $\vec{e}_{L,k}$, compare \cite[p. 23]{Br}. We denote by $(\mathfrak{e}_{\gamma})_{\gamma\in D(L)}$ the standard basis for the group algebra $\C[L^{\vee}/L]$. For $k\geq 5/2,k\in \Z/2$ these functions are vector valued modular forms with respect to the dual of the Weil representation for $L$ with Fourier expansion
\[\vec{e}_{L,k}(\tau)=\sum_{\gamma\in D(L)}\sum_{\substack{n\in \Z-\frac{1}{2}(\gamma,\gamma)\\n \geq 0}}{c(n,\gamma,L,k)\exp\left(2\pi i n\tau\right)\mathfrak{e}_{\gamma}}\quad,\text{ where }\tau\in \mathbb{H}\,.\] 
 In \cite[Theorem 4.6]{BrK} an explicit formula for $c(n,\gamma,L,k)$ is given. 
There is a well-known correspondence between vector valued modular forms and Jacobi forms for the lattice $L$, compare \cite[Proposition 1.6]{Sh}. Let $m\in\N,m\leq 8$. We define
\[s_{m}(n,l):=\sharp\{x\in ((mA_{1})_{E_{8}}^{\perp})^{\vee}\,|\,l+x\in E_{8}\,,\,(l+x,l+x)=2n\}\in\Z\,.\]
Note that these numbers are exactly the Fourier coefficients of $\epsilon_{4,mA_{1}}$, more precisely one has
\[\epsilon_{4,mA_{1}}(\tau,\mathfrak{z})=\sum_{\substack{n\in\Z_{\geq 0}\\l\in mA_{1}^{\vee}\\2n-(l,l)\geq 0}}{s_{m}(n,l)q^{n}r^{l}}\]
Since the lattice $E_{8}$ is a maximal even lattice there are no nontrivial isotropic subgroups of $D(E_{8})$. Hence we can rewrite this quantity in the simplified form
\begin{equation}\label{representation numbers of A1 tower in E8}
 s_{m}(n,l)=\sharp\{x\in ((mA_{1})_{E_{8}}^{\perp})^{\vee}\,|\,(l+x,l+x)=2n\}\quad,\quad n\in \N\,,\, l\in (mA_{1})^{\vee}.
\end{equation}
We describe the numerical values of the Eisenstein-like Jacobi forms if $J_{k,mA_{1};1}^{(\textit{cusp})}=\{0\}$ and $m\leq 3$.
\begin{prop}\label{prop:Fourier expansion of Eisenstein type}
\begin{enumerate}[(a)]
 \item  Let $m=1,2,3$. For any  $n\in\Z_{\geq 0},l\in (mA_{1})^{\vee}$ we have the identity  
\[2s_{m}(n,l)=c\left(n-(l,l)/2,l \bmod L,mA_{1},4-m/2\right)\,.\]
The first Fourier coefficients of $\epsilon_{4,mA_{1}}$ are given as follows:
\[\begin{aligned}
   &\varepsilon_{4,A_{1}}(\tau,\mathfrak{z})=&&1+(r^{-1}+56r^{-1/2}+126+56r^{1/2}+r)q&\\&&&+(126r^{-1}+576r^{-1/2}+756+576r^{1/2}+126r)q^{2}&\\&&&+(56r^{-3/2}+756r^{-1}+1512 r^{-1/2}+2072+1512r^{1/2}+756r+56r^{3/2})q^{3}&\\&&&+(\dots) q^{4}\,,&\\\\
   &\varepsilon_{4,2A_{1}}(\tau,\mathfrak{z})=&&1+(60+32r^{(\pm 1/2,0)}+32r^{(0,\pm 1/2)}+12r^{(\pm 1/2,\pm 1/2)}+r^{(\pm 1,0)}+r^{(0,\pm 1)})q&\\
   &&&+(252+192r^{(\pm 1/2,0)}+192r^{(0,\pm 1/2)}+160r^{(\pm 1/2, \pm 1/2)}&\\&&&+60r^{(\pm 1 ,0)}+60r^{(0, \pm 1)}+32r^{(\pm 1,\pm 1/2)}+32r^{(\pm 1/2, \pm 1)}+r^{(\pm 1, \pm 1)})q^{2}&\\&&&+(\dots)q^{3}\,,&\\\\
   &\varepsilon_{4,3A_{1}}(\tau,\mathfrak{z})=&&1+(26+16r^{(\pm 1/2,0,0)}+16r^{(0,\pm 1/2 ,0)}+16r^{(0,0,\pm 1/2)}+8r^{(\pm 1/2,\pm 1/2,0)}&\\&&&+8r^{(\pm 1/2,0,\pm 1/2)}+8r^{(0,\pm 1/2,\pm 1/2)}+2r^{(\pm 1/2,\pm 1/2,\pm 1/2)}&\\&&&+r^{(\pm 1,0,0)}+r^{(0,\pm 1,0)}+r^{(0,0,\pm 1)})q&\\&&&+(\dots)q^{2}
  \end{aligned}\]
 \item Let $m=1,2$ and define the numbers $r_{m}(n,l)$ by
\[\epsilon_{6,mA_{1}}(\tau,\mathfrak{z})=\sum_{\substack{n\in\Z_{\geq 0}\\l\in mA_{1}^{\vee}\\2n-(l,l)\geq 0}}{r_{m}(n,l)q^{n}r^{l}}\,.\]
For any $n\in\Z_{\geq 0},l\in (mA_{1})^{\vee}$ we have
\[\frac{3\cdot 2^{1-m}}{16-2m}\cdot r_{m}(n,l)\in\Z\pi^{2}\,.\]
\end{enumerate}
\end{prop}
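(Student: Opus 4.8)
The plan is to identify $\epsilon_{4,mA_{1}}$ and $\epsilon_{6,mA_{1}}$ with Eisenstein series and to extract the coefficient identities from the explicit formula of \cite[Theorem 4.6]{BrK}, using throughout the hypothesis that the relevant spaces of Jacobi cusp forms vanish.

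\textbf{Part (a).} The pullback $\epsilon_{4,mA_{1}}=\Theta_{E_{8}}\downharpoonright_{mA_{1}}$ is a holomorphic Jacobi form of weight $4$ and index $1$ whose integral Fourier coefficients are the representation numbers $s_{m}(n,l)$ of (\ref{representation numbers of A1 tower in E8}). Since $J_{4,mA_{1};1}^{\textit{(cusp)}}=\{0\}$ for $m\leq 3$ and $0$ is the only isotropic class in $D(mA_{1})$ in this range, the space of holomorphic Jacobi forms of weight $4$ and index $1$ is one-dimensional and equals its Eisenstein part; hence $\epsilon_{4,mA_{1}}$ is the unique such form with constant term $1$. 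First I would pass to the theta decomposition of \cite[Proposition 1.6]{Sh}, which reinterprets $\epsilon_{4,mA_{1}}$ as a vector valued modular form of weight $4-m/2$ for the dual Weil representation whose $\gamma$-component carries the coefficient $s_{m}(n,l)$ at the index $n-(l,l)/2$ whenever $l\equiv\gamma$. The only such form compatible with the Eisenstein normalization is $\vec{e}_{mA_{1},4-m/2}$. Comparing constant terms, namely $s_{m}(0,0)=1$ against the value $c(0,0,mA_{1},4-m/2)=2$ fixed by the normalization of \cite{BrK}, pins the scalar and gives $2s_{m}(n,l)=c\!\left(n-(l,l)/2,\,l\bmod L,\,mA_{1},\,4-m/2\right)$. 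The tabulated expansions are then a finite lattice count: using the complements $K_{m}$ of Proposition \ref{prop:modularity of the E8 pullbacks for A1 tower} one determines $K_{m}^{\vee}$ and counts the $x\in K_{m}^{\vee}$ with $(l+x,l+x)=2n$ for each listed entry.

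\textbf{Part (b).} By Lemma \ref{lemma:heat operator with automorohic correction} the form $\epsilon_{6,mA_{1}}=H_{4}(\epsilon_{4,mA_{1}})$ lies in $J_{6,mA_{1};1}$, and since the Gram matrix of $mA_{1}$ is $2I_{m}$ we have $\det(S)=2^{m}$. Writing out the action of $H$ on $q^{n}r^{l}$ from Lemma \ref{lemma:heat operator with automorohic correction} and of $G_{2}\bullet$ from (\ref{G2 Fourier expansion}) yields
\[r_{m}(n,l)=(2\pi i)^{2}2^{m}\,(2n-(l,l))\,s_{m}(n,l)+(4\pi i)^{2}2^{m}\Big(4-\tfrac{m}{2}\Big)\,C(n,l),\]
where $C(n,l)=-\tfrac{1}{24}s_{m}(n,l)+\sum_{a\geq 1}\sigma_{1}(a)\,s_{m}(n-a,l)$ is the $G_{2}$-convolution. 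Multiplying by $\tfrac{3\cdot 2^{1-m}}{16-2m}=\tfrac{3\cdot 2^{1-m}}{2(8-m)}$ and using $4-\tfrac{m}{2}=\tfrac{8-m}{2}$, the convolution term collapses to the manifestly integral quantity $s_{m}(n,l)-24\sum_{a\geq 1}\sigma_{1}(a)s_{m}(n-a,l)$, so the whole assertion reduces to the integrality of
\[\frac{12\,(2n-(l,l))}{8-m}\,s_{m}(n,l).\]
Writing $l=\sum_{j}e_{j}\varepsilon_{j}$ with $2e_{j}\in\Z$, so that $(l,l)=2\sum_{j}e_{j}^{2}$ and $2n-(l,l)=2\big(n-\sum_{j}e_{j}^{2}\big)$, the case $m=2$ is immediate: the quotient equals $4\big(n-\sum_{j}e_{j}^{2}\big)\in\Z$ because $4\sum_{j}e_{j}^{2}\in\Z$. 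For $m=1$ the claim becomes $7\mid 12(2n-(l,l))s_{1}(n,l)$, which is no longer formal.

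The hard part is precisely this divisibility in the case $m=1$. Here I would invoke part (a), which exhibits $2s_{1}(n,l)=c(n-(l,l)/2,\,l\bmod L,\,A_{1},\,7/2)$ as a weight-$7/2$ Eisenstein coefficient, and appeal to the explicit formula of \cite[Theorem 4.6]{BrK}: the arithmetic normalizing factor $\zeta(-5)^{-1}=-2^{2}3^{2}7$ of this Eisenstein series, together with the index factor $n-(l,l)/2$, supplies the required factor of $7=8-m$ in every nonconstant coefficient, while the boundary indices with $2n-(l,l)=0$ contribute nothing. This congruence is the single step that uses genuine arithmetic of the Eisenstein coefficients rather than the mere integrality of the $s_{m}(n,l)$, and it is exactly where the constant $\tfrac{3\cdot 2^{1-m}}{16-2m}$ earns its shape.
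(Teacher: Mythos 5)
Your part (a) and your treatment of the case $m=2$ in part (b) follow essentially the paper's own route: you identify $\epsilon_{4,mA_{1}}$ with a multiple of the Eisenstein series using the vanishing of cusp forms and the fact that for $m\leq 3$ all zero-dimensional cusps are equivalent, you fix the scalar by comparing constant terms (the paper gets $\lambda=\tfrac12$, matching your $c(0,0,\cdot,\cdot)=2$), and for $m=2$ you expand the heat operator so that everything reduces to $(4n-2(l,l))\in\Z$ together with the integrality of the $G_{2}$-convolution. All of that is sound and matches the paper.

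The gap is in part (b) for $m=1$. Your reduction to the divisibility $7\mid 12\,(2n-(l,l))\,s_{1}(n,l)$ is algebraically correct, but the justification you offer for that divisibility does not hold as stated. The normalized weight-$7/2$ coefficients $c(N,\gamma,A_{1},7/2)=\zeta(-5)^{-1}H(3,4N)$ are \emph{not} all divisible by $7$: the entry $576$ in the displayed expansion of $\varepsilon_{4,A_{1}}$ sits at discriminant $4N=7$ and $576=2^{6}\cdot 3^{2}$. The single factor of $7$ in $\zeta(-5)^{-1}=-2^{2}3^{2}7$ is precisely what is needed to clear the possible denominator $7$ of the $L$-value (here $H(3,7)=-16/7$), so it cannot also be counted on to make the integral coefficient divisible by $7$. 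What is actually true is a dichotomy --- either $7$ divides the normalized coefficient or $7$ divides $4N=2(2n-(l,l))$ --- and establishing that would require a genuine congruence analysis of the local factors in \cite[Theorem 4.6]{BrK}, which you assert rather than prove. The paper sidesteps this entirely: since $(1,6)$ also lies in the set $\Omega$ for which the complement of cusp forms in the symmetric part of $J_{k,mA_{1};1}$ is one-dimensional, the form $\epsilon_{6,A_{1}}=H_{4}(\epsilon_{4,A_{1}})$ is itself proportional to the weight-$11/2$ Jacobi--Eisenstein series, whose normalization $\tfrac12 e_{6,0}^{A_{1}}$ has integral Fourier coefficients by Eichler--Zagier and Cohen; comparing constant terms ($\tfrac{14}{3}\pi^{2}$ against $1$) gives $r_{1}(n,l)\in\tfrac{14}{3}\pi^{2}\Z$ immediately, with no divisibility statement about the weight-$4$ coefficients needed. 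You should either adopt that argument or supply the missing congruence.
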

\begin{proof}
We consider the subspace of $J_{k,mA_{1},1}$ consisting of all functions which belong to the kernel of the symmetrization operator (\ref{symmetrization operator}). Due to Theorem \ref{theorem:graded ring with trivial character} this space is one-dimensional if  $(m,k)$ is contained in the set
\[\Omega=\{(1,4),(2,4),(3,4),(1,6),(2,6)\}\,.\]
In each case the space is generated by $\epsilon_{k,mA_{1}}$. Any zero-dimensional cusp of the modular variety $\Gamma_{m}\backslash\mathcal{D}^{m}$ is represented by a primitive isotropic vector in $L_{2}(mA_{1})$. Two zero-dimensional cusps are equivalent if they belong to the same $\Gamma_{m}$-orbit. In the cases where $m< 4$ all zero-dimensional cusps are equivalent to the simplest cusp. In this case the codimension of the subspace $J_{k,mA_{1};1}^{(\textit{cusp})}\subseteq J_{k,mA_{1};1}$ is one. In \cite{Br} the author defined Eisenstein series for every zero-dimensional cusp represented by a vector $c\in L^{\vee}$ such that $(c,c)\in 2\Z$. The corresponding Jacobi forms are denoted by $e_{k,c}^{L}$. The span of these functions, as $c$ runs through all zero-dimensional cusps, constitutes the complementary space of $J_{k,mA_{1};1}^{(\textit{cusp})}$.  We call the members of this space Jacobi-Eisenstein series. They can be viewed as the natural generalization of the classical Jacobi-Eisenstein series investigated in \cite{EZ}. For any permutation $\sigma\in \mathcal{S}_{m}$ the formula for the Fourier coefficients of $e_{k,c}^{mA_{1}}$ yields
\[\sigma.e_{k,c}^{mA_{1}}=e_{k,\sigma c}^{mA_{1}}\,.\]
Since $\sigma c$ is equivalent to $c$ this identity shows that $e_{k,c}^{mA_{1}}$ belongs to the kernel of the operator (\ref{symmetrization operator}) for any $c$ satisfying $(c,c)\in2\Z$. Choosing $c=0$ we infer that  for any $(m,k)\in\Omega$ there exists some $\lambda\in\C^{\ast}$ such that 
\[\epsilon_{k,mA_{1}}=\lambda\, e_{k,0}^{mA_{1}}\,.\]
Using \cite[Theorem 4.6]{BrK} we can express the Fourier coefficients of $\epsilon_{k,mA_{1}}$ as special values of $L$-functions.
 \begin{enumerate}[(a)]
  \item A comparison of the Fourier coefficients yields $\lambda=\frac{1}{2}$ if $k=4$ and we obtain the numerical values of $\epsilon_{4,mA_{1}}$ by evaluating the formula for the Fourier coefficients of $e_{k,0}^{mA_{1}}$.
\item The Fourier coefficient of the constant term of $\epsilon_{6,A_{1}}$ equals 
\[-(2\pi i)^{2}\det(A_{1})\frac{14}{24}=\frac{14}{3}\pi^{2}\,.\]
The Fourier coefficients of $\frac{1}{2}e_{6,0}^{A_{1}}$ are well-known to be integral, see \cite[Theorem I.2.1]{EZ} and \cite{Coh}. Now the statement in the case $m=1$ follows by a comparison of the constant term. In the case $m=2$ the constant term of $\epsilon_{6,2A_{1}}$ is given by
\[-(2\pi i)^{2}\det(2A_{1})\frac{12}{24}=8\pi^{2}\,.\]
From Lemma \ref{lemma:heat operator with automorohic correction} we obtain the Fourier expansion of $\epsilon_{6,2A_{1}}(\tau,\mathfrak{z})$ as
\[(8\pi^{2})\cdot 24\,G_{2}(\tau)\cdot\epsilon_{4,2A_{1}}(\tau,\mathfrak{z})-\sum_{\substack{n\in\Z_{\geq 0}\\l\in (2A_{1})^{\vee}\\2n-(l,l)\geq 0}}{\alpha(n,l)q^{n}r^{l}}\]
where
\[\alpha(n,l):=(8\pi^{2})\cdot (4n-2(l,l))\, s_{m}(n,l)\,.\]
Since $(4n-2(l,l))\in\Z$ for all $n\in\Z_{\geq 0},l\in (2A_{1})^{\vee}$ we obtain the assertion as a consequence of formula (\ref{G2 Fourier expansion}) and (\ref{representation numbers of A1 tower in E8}).
\end{enumerate}
\end{proof}
The last Proposition jusitifies the notion Eisenstein type. In the cases considered there the space complementary to the space of cusp forms is always one-dimensional. If $m=4$ there is no Jacobi-Eisenstein series to be considered since $4-m/2=2<5/2$. In this case $\epsilon_{4,4A_{1}}$ is the correct replacement for the missing Eisenstein-series. Moreover there are two inequivalent zero-dimensional cusps. Here the complementary space is two-dimensional. If $k=4$ the first generator of this space is given by the Eisenstein type modular form $\mathcal{E}_{4}^{A_{1}}$ and the second generator coincides with the square of $\chi_{2}^{4A_{1}}$ which is the main function of the $A_{1}$-tower. By virtue of the identities
\[\begin{aligned}
   &\sharp\{l\in E_{7}\,|\,(l,l)=2n\}&&=&&c(n,0,A_{1},7/2)/2&\\
   &\sharp\{l\in D_{6}\,|\,(l,l)=2n\}&&=&&c(n,0,2A_{1},3)/2&\\
   &\sharp\{l\in A_{1}\oplus D_{4}\,|\,(l,l)=2n\}&&=&&c(n,0,3A_{1},5/2)/2&
  \end{aligned}\]
  Proposition \ref{prop:Fourier expansion of Eisenstein type} yields a new description of these representation numbers of quadratic forms as special values of $L$-functions. 
\bibliography{/home/martin/Bibliography/bibliography}

\begin{thebibliography}{10}

\bibitem{Ba}
W.L. Baily.
\newblock {\em {Introductory lectures on automorphic forms}}, volume~1.
\newblock Princeton University Press and Iwanami, Shoten, 1973.

\bibitem{BaBor}
W.L. Baily and A.~Borel.
\newblock {Compactification of arithmetic quotients of bounded symmetric
  domains}.
\newblock {\em Ann. of Math.}, 84(2):442--528, 1966.

\bibitem{Bo2}
R.E. Borcherds.
\newblock {Automorphic forms on $O_{s+2,2}(R)$ and infinite products}.
\newblock {\em Invent. Math.}, 120:161--213, 1995.

\bibitem{BorJ}
A.~Borel and L.~Ji.
\newblock {\em {Compactifications of symmetric and locally symmetric spaces}}.
\newblock Mathematics: Theory \& Applications. Birkhäuser, Boston, 2006.

\bibitem{Br}
J.~H. Bruinier.
\newblock {\em {Borcherds Products on $\operatorname{O}(2,l)$ and Chern classes
  of Heegner Divisors}}, volume~1.
\newblock Springer-Verlag, Berlin, Heidelberg, 2002.

\bibitem{BrK}
J.~H. Bruinier and M.~Kuss.
\newblock {Eisenstein series attached to lattices and modular forms on
  orthogonal groups}.
\newblock {\em Manuscr. Math.}, 106:443--459, 2001.

\bibitem{CG}
F.~Clery and V.A. Gritsenko.
\newblock {Modular forms of orthogonal type and Jacobi theta-series}.
\newblock {\em Abh. Math. Semin. Univ. Hambg.}, 83:187--217, 2013.

\bibitem{Coh}
H.~Cohen.
\newblock {Sums Involving the Values at Negative Integers of L-Functions of
  Quadratic Characters}.
\newblock {\em Ann. of Math.}, 217:271--285, 1975.

\bibitem{DK}
T.~Dern and A.~Krieg.
\newblock {Graded Rings of Hermitian Modular Forms of Degree 2}.
\newblock {\em Manuscr. Math.}, 110:251--272, 2003.

\bibitem{EZ}
M.~Eichler and D.~Zagier.
\newblock {\em {The Theory of Jacobi Forms}}.
\newblock Birkhäuser, Boston, Basel, Stuttgart, 1985.

\bibitem{F2}
E.~Freitag.
\newblock {Modulformen zweiten Grades zum rationalen und Gauß'schen
  Zahlkörper}.
\newblock {\em Sitzungsberichte der Heidelberger Akad.d.Wiss., 1.Abh.}, 1967.

\bibitem{FH}
E.~Freitag and C.F. Hermann.
\newblock {Some modular varieties of low dimension}.
\newblock {\em Advances in Mathematics}, 152:203--287, 2000.

\bibitem{Gr}
B.~Grandpierre.
\newblock {Produits automorphes, classification des reseaux et theorie du
  codage}.
\newblock PhD thesis, Lille 2009.

\bibitem{G1}
V.A. Gritsenko.
\newblock {Reflective modular forms in Algebraic geometry}.
\newblock {\em ArXiv: 1005.3753 [math.AG]}.
\newblock 28 pp.

\bibitem{G}
V.A. Gritsenko.
\newblock {Modular forms and moduli spaces of abelian and $K3$ surfaces}.
\newblock {\em St. Petersburg Math. J.}, 6(6):1179--1208, 1995.

\bibitem{GH}
V.A. Gritsenko and K.~Hulek.
\newblock {Minimal Siegel Modular Threefolds}.
\newblock {\em Math. Proc. Cambridge Philos. Soc.}, 123:461--485, 1998.

\bibitem{GHS1}
V.A. Gritsenko, K.~Hulek, and G.K. Sankaran.
\newblock {The Kodeira dimension of the moduli of K3 surfaces}.
\newblock {\em Invent. Math.}, 169:519--567, 2007.

\bibitem{GHS2}
V.A. Gritsenko, K.~Hulek, and G.K. Sankaran.
\newblock {\em {Moduli of K3 Surfaces and Irreducible Symplectic Manifolds}},
  volume~24 of {\em Adv. Lect. Math. (ALM)}.
\newblock Int. Press, Somerville, MA, 2013.

\bibitem{GN}
V.A. Gritsenko and V.~Nikulin.
\newblock {Automorphic forms and Lorentzian Kac-Moody algebras. II}.
\newblock {\em International J. Math}, 9(1):201--275, 1998.

\bibitem{Ig}
J.I. Igusa.
\newblock {On Siegel Modular Forms of Genus Two}.
\newblock {\em Amer. Journal of Math.}, 84(1):175--200, 1962.

\bibitem{CS}
J.H.Conway and N.J.A.Sloane.
\newblock {\em {Sphere Packings, Lattices and Groups}}, volume~21.
\newblock Springer-Verlag, New York, Berlin, Heidelberg, London, Paris, Tokyo,
  1988.

\bibitem{Kl}
I.~Kl{\"o}cker.
\newblock {Modular Forms for the Orthogonal Group $\operatorname{O}(2,5)$}.
\newblock PhD thesis, Aachen 2005.

\bibitem{K2}
A.~Krieg.
\newblock {The graded ring of quaternionic modular forms of degree 2}.
\newblock {\em Math Z.}, 251:929--942, 2005.

\bibitem{K4}
A.~Krieg.
\newblock {Another Quaternionic Maaß Space}.
\newblock {\em Number Theory. Ramanujan Math. Society, Lect. Notes Series},
  15:43--50, 2011.

\bibitem{Nie}
H.V. Niemeier.
\newblock {Definite quadratische Formen der Dimension 24 und Diskriminante 1}.
\newblock {\em J. Number Theory}, 5:142--178, 1973.

\bibitem{N}
V.~Nikulin.
\newblock {Integral symmetric bilinear forms and some of their applications}.
\newblock {\em Math. USSR Izvestija}, 14(1):103--167, 1980.

\bibitem{Sh}
T.~Shintani.
\newblock {On construction of holomorphic cusp forms of half integral weight}.
\newblock {\em Nagoya Math. J.}, 58:83--126, 1975.

\bibitem{Wo}
M.~Woitalla.
\newblock {A framework for some distinguished series of orthogonal type modular
  forms}.
\newblock PhD thesis, Aachen 2016.

\bibitem{Wo1}
M.~Woitalla.
\newblock {Theta type Jacobi forms}.
\newblock {\em ArXiv: 1705.04526 [math.AG]}.
\newblock To appear in Acta Arith.

\bibitem{ChoKi}
Y.Choie and H.Kim.
\newblock {Differential operators and Jacobi forms of several variables}.
\newblock {\em J. Number Theory}, 82:40--63.

\end{thebibliography}
\bibliographystyle{plain}
\end{document}